\newtheorem{theorem}{Theorem}
\newtheorem{proposition}[theorem]{Proposition}
\newtheorem{lemma}[theorem]{Lemma}
\newtheorem{corollary}[theorem]{Corollary}
\theoremstyle{definition}
\newtheorem{definition}{Definition}
\newtheorem{remark}{Remark}
\newcommand{\N}{\mathbb{N}}
\newcommand{\Z}{\mathbb{Z}}
\newcommand{\R}{\mathbb{R}}
\newcommand{\C}{\mathbb{C}}
\newcommand\e{\mathrm{e}}
\newcommand\I{\mathrm{i}}
\newcommand{\F}{\mathcal{F}}
\newcommand\re{\operatorname{Re}}
\newcommand\im{\operatorname{Im}}
\newcommand\spr{\operatorname{spr}}
\newcommand\eps\varepsilon
\renewcommand\epsilon\varepsilon
\renewcommand\rho\varrho
\newcommand\lm\lambda
\newcommand{\supp}{\operatorname{supp}}
\newcommand{\dist}{\operatorname{dist}}
\newcommand{\rd}{\mathrm{d}}
\newcommand{\beq}{\begin{equation}}
\newcommand{\eeq}{\end{equation}}
\newcommand{\be}{\begin{equation*}}
\newcommand{\ee}{\end{equation*}}
\newcommand{\bmat}{\begin{pmatrix}}
\newcommand{\emat}{\end{pmatrix}}
\begin{document}
\title[Random Schrödinger operators with complex decaying potentials]{Random Schrödinger operators with complex decaying potentials}
\subjclass[2020]{35P15, 81Q12, 35R60, 82B44.}
 \author[J.-C.\ Cuenin]{Jean-Claude Cuenin}
 \address[Jean-Claude Cuenin]{Department of Mathematical Sciences, Loughborough University, Loughborough,
 Leicestershire LE11 3TU, United Kingdom}
 \email{J.Cuenin@lboro.ac.uk}

 \author[K. Merz]{Konstantin Merz}
 \address[Konstantin Merz]{Institut f\"ur Analysis und Algebra, Technische Universit\"at Braunschweig, Universit\"atsplatz 2, 38106 Braunschweig, Germany}
 \email{k.merz@tu-bs.de}

\begin{abstract}
We prove that the eigenvalues of a continuum random Schrödinger operator $-\Delta+ V_{\omega}$ of Anderson type, with complex decaying potential, can be bounded (with high probability) in terms of an $L^q$ norm of the potential for all $q\leq d+1$. This shows that in the random setting, the exponent $q$ can be essentially doubled compared to the deterministic bounds of Frank (Bull. Lond. Math. Soc., 2011). This improvement is based on ideas of Bourgain (Discrete Contin. Dyn. Syst., 2002) related to almost sure scattering for lattice Schrödinger operators.
\end{abstract}

\maketitle

\section{Introduction and main result}

Consider a Schrödinger operator $-\Delta+ V$ on $L^2(\R^d)$. Frank \cite{MR2820160} proved the scale-invariant bounds
\begin{align}\label{LT bound}
|z|^{q-d/2}\lesssim \int_{\R^d}|V(x)|^{q}\rd x
\end{align} 
for eigenvalues $z$ of $-\Delta+V$, when $q\leq (d+1)/2$ (we call such $V$ short range). The short range condition is best possible, i.e.\ \eqref{LT bound} is generally not true for $q>(d+1)/2$. Counterexamples for $z>0$ were constructed by Frank and Simon \cite{MR3713021} and for $\im z\neq 0$ by Bögli and the first author \cite{bogli2021counterexample}. These counterexamples settle the Laptev--Safronov conjecture \cite{MR2540070} in the negative.

The aim of this paper is to show that for random potentials the short range exponent can be essentially doubled, from $(d+1)/2$ to $d+1$, compared to the deterministic case. We consider Anderson type Schrödinger operators of the form $-\Delta+V_{\omega}$, where
\begin{align}\label{Vomega}
V_{\omega}(x)=\sum_{j\in h\Z^d}\omega_jv_j\mathbf{1}_{Q}((x-j)/h),\quad Q=[0,1)^d,\quad h>0.
\end{align}
More generally, given a deterministic potential $V$, consider its randomization at scale $h>0$, given by
\begin{align}\label{randomization of deterministic V}
V_{\omega}(x)=\sum_{j\in h\Z^d}\omega_jV(x)\mathbf{1}_{Q}((x-j)/h).
\end{align}
One could also replace $\mathbf{1}_{Q_{j}}$ with some rapidly decaying function. Note that, in both cases \eqref{Vomega} and \eqref{randomization of deterministic V}, the $L^q$ norm of $V_{\omega}$ is deterministic, 
\begin{align}\label{Lq norm deterministic}
\|V_{\omega}\|_{L^q(\R^d)}=(h^d\sum_{j\in h\Z^d}|v_j|^q)^{1/q},
\end{align}
where $v_j$ is the $L^q$-average of $V$ over $j+hQ$ in the general case \eqref{randomization of deterministic V}, and we have $\|V_{\omega}\|_{L^q(\R^d)}=\|V\|_{L^q(\R^d)}$.
For this reason, we also denote the norm \eqref{Lq norm deterministic} by $\|V\|_{L^q(\R^d)}$ in case \eqref{Vomega}.
Crucially, we assume that $(\omega_j)_{j\in h\Z^d}\subset [-1,1]$ are \textbf{independent, mean-zero Gaussian or symmetric Bernoulli random variables}. In the following, $V_{\omega}$ will always denote the randomization \eqref{randomization of deterministic V} of a given deterministic potential $V$, and $\langle x\rangle=2+|x|$. The following standard assumptions on the local singularities of $V\in L^q(\R^d)$,
\begin{align}\label{lower bound q}
q\geq 1 \quad \mbox{if } d=1,\quad q>1\quad \mbox{if } d=2,\quad q\geq d/2 \quad \mbox{if } d\geq 3,
\end{align}
ensure that $-\Delta+V$ can be defined as an $m$-sectorial operator. These assumptions can be slightly weakened (see Remark \ref{rem. 1} (ii)) and only play a minor role here. In contrast, the average decay of the potential (i.e.\ an upper bound on $q$) -- to be stated in the assumptions of the following theorems -- is of central importance.

\begin{theorem}\label{thm. 1}
There exist constants $M_0,c>0$ such that the following holds.
For any $R,\lambda>0$, $0<h<R$, $|\epsilon|\ll \lambda$, $q\leq d+1$, for any $V\in L^q(\R^d)$ supported in a ball of radius $R$, and for any $M\geq M_0$, each eigenvalue $z=(\lambda+\I\epsilon)^2$ of $-\Delta+V_{\omega}$ satisfies 
\begin{align*}
\frac{\lambda^{2-\frac{d}{q}}}{\langle \lambda h\rangle^{d/2}(\log \langle \lambda R\rangle)^{7/2}}\leq M\|V\|_{L^{q}(\R^d)},
\end{align*}
except for $\omega$ in a set of measure at most
$\exp(-cM^2)$.
\end{theorem}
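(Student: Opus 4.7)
My plan is to combine a Birman--Schwinger reduction with an operator-valued Khintchine inequality applied to an \emph{iterated} Birman--Schwinger operator. The doubling of the admissible exponent from $(d+1)/2$ to $d+1$ should come from the fact that, after iterating once, the potential appears twice in the effective deterministic operator, so that the Stein--Tomas bound gets applied with $|V|$ rather than $|V|^{1/2}$ as the intermediate factor.

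Write $V_\omega = u_\omega v_\omega$ with $|u_\omega|=|v_\omega|=|V|^{1/2}$ and $\omega$-dependent unimodular phases. If $z=(\lambda+\I\epsilon)^2$ is an eigenvalue of $-\Delta+V_\omega$, then $-1\in\sigma_p(K_\omega)$ with $K_\omega:=v_\omega R_0(z)u_\omega$, so $\|K_\omega^2\|_{L^2\to L^2}\ge 1$. Here
$$K_\omega^2 = v_\omega R_0(z) V_\omega R_0(z) u_\omega = \sum_{j\in h\Z^d}\omega_j\,T_j,\qquad T_j := v_\omega R_0(z) V_j R_0(z) u_\omega,$$
with $V_j = V\mathbf 1_{j+hQ}$, exposing the random signs linearly. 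A direct calculation shows that the unimodular phases in $u_\omega, v_\omega$ act as $L^2$-isometries and can be replaced, without changing the operator norm, by the deterministic factor $|V|^{1/2}$; the problem reduces to bounding $\|\sum_j \omega_j T_j\|_{L^2\to L^2}$ with deterministic $T_j = |V|^{1/2} R_0(z) V_j R_0(z) |V|^{1/2}$.

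Applying an operator-valued Khintchine/Rudelson inequality for independent symmetric Bernoulli or Gaussian coefficients, outside a set of probability $\exp(-cM^2)$ one has
$$\Bigl\|\sum_j \omega_j T_j\Bigr\|_{L^2\to L^2} \le CM(\log N)^{1/2}\,\max\Bigl(\Bigl\|\sum_j T_j^* T_j\Bigr\|^{1/2},\,\Bigl\|\sum_j T_j T_j^*\Bigr\|^{1/2}\Bigr),$$
with $N\asymp (R/h)^d$. The sum $\sum_j T_j^* T_j$ is deterministic, and its integral kernel is the kernel of $|V|^{1/2} R_0(z)^* |V| R_0(z) |V|^{1/2}$ multiplied by the indicator $\mathbf 1_{\{(x,y)\text{ in the same box}\}}$, which restricts the middle resolvent sandwich to an $h$-neighbourhood of the diagonal. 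Estimating this diagonal piece via the oscillatory form of the resolvent kernel at frequency $\lambda$ should produce the factor $\langle\lambda h\rangle^{d/2}$, while bounding the ambient deterministic operator via a dyadic Littlewood--Paley decomposition around frequency $\lambda$ combined with the Stein--Tomas endpoint $R_0(z): L^{p'}\to L^p$ at $p=2(d+1)/(d-1)$ yields a bound of order $\langle\lambda h\rangle^d (\log\langle\lambda R\rangle)^{O(1)} \lambda^{2(d/q-2)} \|V\|_{L^q}^2$ in the doubled range $q\le d+1$. Rearranging $\|K_\omega^2\|\ge 1$ then gives the theorem.

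The main obstacle will be the deterministic estimate of $\sum_j T_j^* T_j$: the diagonal restriction of the resolvent kernel at scale $h$ must be estimated sharply in order to produce both the correct $\langle\lambda h\rangle^{d/2}$ factor and the precise power $7/2$ of the logarithm, and this should require a careful wave-packet or decoupling-style argument iterating several layers of frequency and spatial localization, broadly along the lines of Bourgain's treatment of almost sure scattering. A secondary difficulty is handling the complex / non-self-adjoint Birman--Schwinger factorization so that the unimodular phases really do decouple from the operator norm at each step of the iteration, and propagating the probability estimate uniformly across the range of $q$ so that the endpoint case $q=d+1$ implies the bound for all smaller $q$.
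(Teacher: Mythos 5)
Your architecture is genuinely different from the paper's, and it is worth stating precisely how. The paper works with the Born series and bounds the \emph{linear} elementary operator $C^{(\delta_2)}V_\omega C^{(\delta_1)}$ directly, using Bourgain's entropy device: a dual--Sudakov covering bound for the discrete Fourier extension operator $S:\ell^2_{\rm av}(\Lambda_R^*)\to\ell^\infty(\Lambda_1\cap B_R)$, a chaining decomposition into nets $\mathcal F_k$ with simultaneous $\ell^\infty$ and $\ell^{p'}$ control, and a subgaussian maximum inequality. The doubling of the exponent from $(d+1)/2$ to $d+1$ emerges there from the improvement $\ell^1_j\to\ell^2_j$ in the $j$-summation supplied by the subgaussian bound (the second H\"older step in the proof of Lemma~\ref{lemma Dudley}) --- the same conceptual gain you attribute to the variance proxy in your noncommutative Khintchine/Rudelson bound. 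The crucial difference is that the paper never needs a deterministic restriction-type estimate at the doubled exponent: every deterministic input is Stein--Tomas at the classical $p'=2(d+1)/(d-1)$ level, and the gain is entirely probabilistic. Your route, by iterating Birman--Schwinger before randomizing, shifts the whole burden onto the deterministic operator $\sum_j T_j^*T_j$, which contains two resolvents and is degree four in $V$; you describe this as the main obstacle and propose only a plan (near-diagonal oscillatory estimates, wave-packet/decoupling) for it, so the crux of the theorem remains unproved in your write-up. This is a genuine gap, not a technical loose end: the Knapp example in Appendix~\ref{Appendix B} shows the target scaling is sharp at $q=d+1$, so there is no slack to lose, and the block-diagonal structure of the middle operator in $\sum_j T_j T_j^*$ does not obviously yield the doubled exponent without either hidden $\|V\|_\infty$ dependence or a sharp near-diagonal resolvent bound that would itself be a new result.

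There are also secondary gaps. Your unitary-conjugation trick for extracting the random phases is clean only in the Bernoulli case, where $|\omega_j|=1$ and hence $|V_\omega|=|V|$; for Gaussian $\omega_j$ the moduli are random and the factorization $V_\omega=u_\omega v_\omega$ no longer reduces to conjugation by unitaries around a deterministic $T_j$. A matrix Bernstein / Rudelson inequality with the subgaussian tail $\exp(-cM^2)$ also requires a smallness condition on $\max_j\|T_j\|$ relative to the variance proxy, which you do not verify. Finally, the estimate must be implemented on the appropriate finite-dimensional discretization (the $\log N$ in your bound presupposes $N\times N$ matrices); the paper handles this carefully through the $\ell^2_{\rm av}(\Lambda_R^*)\to\ell^\infty(\Lambda_1\cap B_R)$ framework, and your sketch does not. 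In short, you have identified the right philosophy --- randomness converts the $\ell^1_j$ deterministic summation into an $\ell^2_j$ one --- but the mechanism you chose requires a hard deterministic input that is neither proved nor reduced to known results, whereas the paper's entropy argument is designed precisely to avoid that input.
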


\begin{remark}\label{rem. 1}
(i) Outside the set $\lambda>0$, $|\epsilon|\ll \lambda$, obvious estimates (as in the case of real potentials) are available. These even hold for sums of powers of eigenvalues as in the classical Lieb--Thirring inequalities, see Frank--Laptev--Lieb--Seiringer \cite{MR2260376}.

\noindent (ii) As in \cite{MR4241156} (see also \cite{MR2219246}), one could weaken the local singularity assumption to $V\in L^{q_0}_{\rm loc}(\R^d)$, with $q_0$ satisfying \eqref{lower bound q}, and then replace $\|V\|_{L^q(\R^d)}$ by the right hand side of \eqref{Lq norm deterministic}, where $v_j$ is now the $L^{q_0}$-average of $V$ over $j+hQ$.
\end{remark}

\begin{remark}
There are three scales in the problem:
\begin{itemize}
\item The energy scale $\lambda^2$,
\item the scale $R$ measuring the support of the potential,
\item the randomization scale $h<R$.
\end{itemize}
In addition, we have introduced an arbitrary (dimensionless) parameter $M$ that appears in the large deviation bound.
\end{remark}

\begin{remark}\label{rem. counterexample 1}
Of course, a compactly supported potential of the form \eqref{Vomega} is in any $L^q$ space. 
The point of the estimate is the very weak dependence on $R$ (logarithmic), compared to what one would get by using Hölder's inequality and the deterministic bound \eqref{LT bound}.
Moreover, compactly supported potentials are interesting in view of the counterexample to the Laptev--Safronov conjecture of Bögli and the first author \cite{bogli2021counterexample}. The counterexample yields a sequence of potentials $V_\eps$, $\eps>0$ small, with $|V_\eps|\lesssim \eps \chi_{\epsilon}$, where $\chi_{\epsilon}$ is the indicator function of the tube 
$$T_{\epsilon}=\{(x_1,x'):\,|x_1|<\eps^{-1},|x'|<\eps^{-1/2}\},$$ such that $1+i\eps$ is an eigenvalue of $-\Delta+V_\eps$. Since 
\begin{align}\label{Veps}
\|V_{\epsilon}\|_{L^q(\R^d)}\lesssim \epsilon^{1-\frac{d+1}{2q}},
\end{align}
this shows that \eqref{LT bound} cannot hold for $q>(d+1)/2$. In this context, Theorem \ref{thm. 1} says that, after randomization on the scale
\begin{align*}
h\leq [\epsilon^{\frac{d+1}{2q}-1}\log(1/\epsilon)^{-\frac{7}{2}}]^{\frac{2}{d}},
\end{align*}
the counterexample for $(d+1)/2<q\leq d+1$ is almost surely destroyed. 
\end{remark}

\begin{remark}
Safronov \cite{Safronov2021} has recently considered eigenvalue sums for random Schrödinger operators with complex potentials of the same form as \eqref{Vomega}, but without the assumption on the distribution of $\omega_j$. However, these results do not give any new information about individual eigenvalues beyond what is known in the deterministic case \cite{MR2820160,MR3717979}. Moreover, Safronov's results only apply to the smaller range $q<(d+1)/2+1/(2d-4)$ compared to $q\leq d+1$. Our results are of a quite different character and therefore a direct comparison is not possible.
\end{remark}

The compact support assumption can be removed at the price of a tiny bit of pointwise decay. 

\begin{theorem}\label{thm. 2}
For any $\delta>0$ there exist constants $M_0,c>0$ such that the following holds.
For any $h,\lambda>0$, $|\epsilon|\ll \lambda$, $q\leq d+1$, for any $V\in \langle x\rangle^{-\delta}L^q(\R^d)$ and for any $M\geq M_0$, each eigenvalue $z=(\lambda+\I\epsilon)^2$ of $-\Delta+V_{\omega}$ satisfies 
\begin{align*}
\frac{\lambda^{2-\frac{d}{q}}}{\langle \lambda h\rangle^{d/2}(\log \langle \lambda h\rangle)^{2}}\leq M\|\langle \lambda x\rangle^{\delta} V\|_{L^{q}(\R^d)},
\end{align*}
except for $\omega$ in a set of measure at most
$\exp(-cM^2)$.
\end{theorem}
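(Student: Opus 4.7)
The approach is to reduce Theorem \ref{thm. 2} to Theorem \ref{thm. 1} by a dyadic decomposition of the potential at the wavelength scale $\lambda^{-1}$. I fix a smooth partition of unity $1=\sum_{k\geq 0}\eta_k$ on $\R^d$ with $\eta_0$ supported in $\{|x|\leq 2\lambda^{-1}\}$ and $\eta_k$ supported in $\{|x|\sim 2^k\lambda^{-1}\}$ for $k\geq 1$, and set $V^{(k)}:=\eta_k V$, so that $V^{(k)}$ is supported in a ball of radius $R_k = 2^{k+1}\lambda^{-1}$. Since the randomization \eqref{randomization of deterministic V} is linear, $V_\omega = \sum_k V_\omega^{(k)}$, and the pointwise bound $\langle \lambda x\rangle\gtrsim 2^k$ on $\supp\eta_k$ yields
\begin{align*}
\|V^{(k)}\|_{L^q(\R^d)}\lesssim 2^{-k\delta}\,\|\langle \lambda x\rangle^\delta V\|_{L^q(\R^d)}.
\end{align*}

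The strategy is to extract from (or re-run) the proof of Theorem \ref{thm. 1} its underlying probabilistic input, which one expects to be an operator-norm bound for the Birman--Schwinger kernel $K(z):= |V_\omega|^{1/2}(-\Delta-z)^{-1}\operatorname{sgn}(V_\omega)|V_\omega|^{1/2}$ of the form
\begin{align*}
\|K(z)\|_{L^2\to L^2} \leq M\,\|V\|_{L^q(\R^d)}\,\frac{\langle \lambda h\rangle^{d/2}(\log\langle \lambda R\rangle)^{7/2}}{\lambda^{2-d/q}},
\end{align*}
valid off a set of measure $\exp(-cM^2)$; Theorem \ref{thm. 1} then follows because an eigenvalue at $z$ forces $\|K(z)\|\geq 1$ by Birman--Schwinger. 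I apply this estimate to each dyadic piece $V^{(k)}$ with $R=R_k$ and combine via a union bound across $k$, choosing the tail parameter $M_k = M\cdot 2^{\alpha k}$ for some $0<\alpha<\delta$, so that $\sum_k \exp(-cM_k^2)\lesssim \exp(-cM^2)$.

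To pass from these per-shell estimates to a bound on $K(z)$ for the full potential, I decompose $K(z)=\sum_{k,\ell} K_{k,\ell}(z)$ by inserting the cutoffs $\eta_k,\eta_\ell$ on either side and use the triangle inequality. Diagonal pieces $K_{k,k}(z)$ are handled directly by the probabilistic bound above applied to $V^{(k)}$; since $\log\langle \lambda R_k\rangle \lesssim k + \log\langle \lambda h\rangle$, the sum $\sum_k 2^{-k\delta}(k+\log\langle\lambda h\rangle)^{7/2}$ is geometric and yields a quantity bounded by the target $(\log\langle\lambda h\rangle)^{2}$ once the excess logarithms are absorbed (the precise reduction of the power from $7/2$ to $2$ should emerge from a tighter, weight-adapted variant of the proof of Theorem \ref{thm. 1} rather than a black-box application). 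Off-diagonal pieces $K_{k,\ell}(z)$ with $|k-\ell|\geq 2$ are controlled by spatial (Agmon-type) decay of the free resolvent kernel between the well-separated supports of $\eta_k$ and $\eta_\ell$, producing an extra geometric factor in $|k-\ell|$ and hence a convergent double sum.

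The chief obstacle is the off-diagonal analysis: the $TT^*$-type probabilistic argument underlying Theorem \ref{thm. 1} is naturally diagonal in the dyadic variable, so cross interactions must be handled either by a deterministic resolvent-decay estimate compatible with the uniform-Sobolev framework of Theorem \ref{thm. 1}, or by re-organising the argument into a single global $TT^*$ bound that already produces the weight $\langle \lambda x\rangle^{-\delta}$ on both sides. Sharpening the logarithmic power from $7/2$ to $2$ will be the most delicate bookkeeping point.
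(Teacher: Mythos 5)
Your dyadic decomposition at scale $\lambda^{-1}$ and the observation that the weight gives $\|V^{(k)}\|_q\lesssim 2^{-k\delta}\|\langle\lambda x\rangle^{\delta}V\|_q$ is exactly the paper's starting point. However, two central pieces of your strategy do not match how the proof actually closes, and the second one is a genuine gap.

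First, the paper does not bound the Birman--Schwinger operator $|V_\omega|^{1/2}R_0\,\mathrm{sgn}(V_\omega)|V_\omega|^{1/2}$. It controls the spectral radius $\spr(R_0V_\omega)$ via the full multilinear Born series $[R_0V]^n=\sum_{k_1,\ldots,k_n}R_0V_{k_1}\cdots R_0V_{k_n}$, whose elementary blocks are $C^{(\delta_{l-1})}V_{k_l}C^{(\delta_l)}$ with $C^{(\delta)}$ a deterministic Fourier multiplier that acts as a square root of the resolvent. This structural choice is not cosmetic: the quadratic form $\langle C^{(\delta_2)}V_\omega C^{(\delta_1)}g,g'\rangle$ is \emph{linear} in the random variables $\omega_j$, which is what lets the entropy/sub-gaussian machinery (Corollary \ref{cor. representaion}, Lemma \ref{lemma Dudley}) run. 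Your $K(z)$, with $|V_\omega|^{1/2}$ on each side, is not linear in $\omega$ and would not feed directly into that machinery.

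Second, and more seriously, your plan to handle the off-diagonal interactions $K_{k,\ell}$ ($|k-\ell|\geq 2$) by Agmon-type decay of the free resolvent kernel fails: for $z=(\lambda+\I\epsilon)^2$ with $|\epsilon|\ll\lambda$ (in particular uniformly as $\epsilon\to 0$), the resolvent kernel has no exponential decay and decays only like $|x-y|^{-(d-1)/2}$, which is far too weak. The paper handles cross interactions instead by the smoothing identity of Subsection \ref{subsection smoothing}: between $V_{k_{l-1}}$ and $V_{k_l}$ one may mollify the resolvent at scale $\delta_l=(2^{k_l}+2^{k_{l-1}})^{-1}$, and the foliation estimate \eqref{bound 3} then costs only $(\log(1/\delta_{l-1}))^{1/2}(\log(1/\delta_l))^{1/2}\asymp$ (polynomial in the $k$'s). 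This logarithmic (rather than power) loss in the separation is precisely what the geometric factor $2^{-\delta k_l}$ can absorb. That absorption also answers your uncertainty about the power of the logarithm: for the $k$-th shell $R\asymp 2^k$, so $\log R\asymp k$ and all $k$-dependent logarithms vanish under $2^{-\delta k}$; only the $h$-dependent logarithms survive, giving exactly the $(\log\langle\lambda h\rangle)^2$ in the statement. Finally, the union bound over all multi-indices is made to converge by inflating the tail parameter to $M_1=M(k_{l-1}+k_l+k_{l+1})$ at each step, rather than by a single choice $M_k=M2^{\alpha k}$.
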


In fact, if we sacrifice the endpoint, we can also remove the pointwise decay assumption. 

\begin{theorem}\label{thm. 3}
For any $q<d+1$, there exist constants $M_0,c>0$ such that the following holds.
For any $h,\lambda>0$, $|\epsilon|\ll \lambda$, for any $V\in L^q(\R^d)$ and for any $M\geq M_0$, each eigenvalue $z=(\lambda+\I\epsilon)^2$ of $-\Delta+V_{\omega}$ satisfies 
\begin{align}\label{eq. thm. 3}
\frac{\lambda^{2-\frac{d}{q}}}{\langle \lambda h\rangle^{d/2}(\log \langle \lambda h\rangle)^{2}}\leq M\|V\|_{L^{q}(\R^d)},
\end{align}
except for $\omega$ in a set of measure at most
$\exp(-cM^2)$.
\end{theorem}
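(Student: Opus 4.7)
The plan is to adapt the proof of Theorem \ref{thm. 2}, removing the pointwise decay weight $\langle\lambda x\rangle^\delta$ at the expense of the endpoint $q=d+1$. This is an instance of Bourgain's $\varepsilon$-removal philosophy: sacrificing the endpoint converts a local-in-space estimate with mild logarithmic loss into a global estimate with none.

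After reducing to $\lambda=1$ by scaling, I would decompose $V=\sum_{k\geq 0}V_k$ dyadically in space with $\operatorname{supp}V_k\subset\{|x|\sim 2^k\}$, so that $\sum_k\|V_k\|_{L^q}^q\lesssim\|V\|_{L^q}^q$. Translating the eigenvalue equation for $-\Delta+V_\omega$ to a Birman--Schwinger bound $\|K_{z,\omega}\|_{L^2\to L^2}\geq 1$ for the sandwiched resolvent $K_{z,\omega}=|V_\omega|^{1/2}R_0(z)|V_\omega|^{1/2}\operatorname{sgn}(V_\omega)$, and decomposing it bilinearly as $K_{z,\omega}=\sum_{j,k}K_{z,\omega}^{(j,k)}$, I would estimate near-diagonal blocks $|j-k|\leq C$ using the compactly supported bound of Theorem \ref{thm. 1} on the shell of scale $2^k$ (paying the logarithmic loss $(\log\langle 2^k\rangle)^{7/2}$), and off-diagonal blocks via the pointwise decay $|R_0(z)(x,y)|\sim |x-y|^{-(d-1)/2}$ of the free resolvent kernel, which provides summability in $|j-k|$.

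The main obstacle is balancing the analytic and probabilistic losses. Summing the bilinear contributions without overall divergence requires the strict inequality $q<d+1$, which provides a small but crucial $\ell^q$ reserve over the dyadic shells to absorb the polynomial-in-$k$ logarithmic losses. Concurrently, the exponential concentration bound $\exp(-cM^2)$ on each shell must survive a union bound over the entire dyadic decomposition, which I would ensure by letting the threshold grow mildly with $k$ (for instance $M_k=M+c'\sqrt{\log k}$) so that $\sum_k\exp(-cM_k^2)\lesssim\exp(-cM^2)$. The interlocking of these two summations through the gap $d+1-q>0$ is the technical heart of the $\varepsilon$-removal and is unavailable at the endpoint, which explains why Theorem \ref{thm. 3} falls short of the full range $q\leq d+1$.
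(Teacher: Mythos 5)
Your plan diverges from the paper's at a fundamental structural level, and the approach as sketched has gaps that the paper's machinery was specifically built to avoid.

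The main problem is the spatial dyadic decomposition $V=\sum_k V_k$ with $\supp V_k\subset\{|x|\sim 2^k\}$. In Theorem~\ref{thm. 2} this works precisely because the weight $\langle x\rangle^{-\delta}$ gives the geometric decay $\|V_k\|_q\lesssim 2^{-\delta k}$ that absorbs the $\log$-losses. Once the weight is removed, $V\in L^q$ only gives $\sum_k\|V_k\|_q^q\lesssim\|V\|_q^q$, i.e.\ $\ell^q$-summability of the shell norms, which supplies no pointwise decay in $k$ (one can arrange $\|V_k\|_q\sim k^{-1/q-\eps}$). Your near-diagonal contribution, summed via the spectral-radius bound $\spr(R_0V)\lesssim\sum_k(\log\langle 2^k\rangle)^{7/2}M_k\|V_k\|_q$, then requires $\sum_k k^{7/2}\|V_k\|_q<\infty$, and even after Hölder this fails for any $q>1$. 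The claim that ``the strict inequality $q<d+1$ provides a small but crucial $\ell^q$ reserve\ldots to absorb the polynomial-in-$k$ logarithmic losses'' is not substantiated: the gap $d+1-q>0$ does not create any extra decay of $\|V_k\|_q$ in $k$. Raising the threshold to $M_k\sim M+c'\sqrt{\log k}$ only worsens matters by another $\sqrt{\log k}$ in the estimate, although it does fix the union bound.

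The second issue is the off-diagonal estimate. The resolvent-kernel decay $|x-y|^{-(d-1)/2}$ combined with Hölder at exponent $q$ gives a Hilbert--Schmidt bound with a factor $L^{(d+1)/2-d/q}$ in the distance $L$ between the supports (this is Lemma~\ref{lemma BSij bound} in the paper after interpolation with the trivial bound), which is \emph{decaying} only when $q\leq(d+1)/2$. In the range $(d+1)/2<q<d+1$ that Theorem~\ref{thm. 3} is actually about, that bound is growing, so the off-diagonal part does not provide summability on its own. The paper circumvents this by interpolating the deterministic Hilbert--Schmidt bound (at the small exponent $q_\eta<(d+1)/2$, where it decays) with the \emph{probabilistic} extension bound (at $q$ close to $d+1$, where it has only log-losses), choosing a tiny interpolation weight $\theta$ so that the $\theta$-power of the polynomial decay in the separation is just enough to kill the logs while barely moving the Lebesgue exponent. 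You have no analogue of this interpolation, and the resulting Lebesgue exponent mismatch is precisely why your scheme cannot reach $q>(d+1)/2$.

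Finally, the decomposition the paper uses is genuinely different: it is a ``horizontal'' dyadic decomposition by level sets, $V_i=V\mathbf 1_{H_{i+1}\leq|V|\leq H_i}$ with $|\{|V|>H_i\}|\sim 2^i$, followed by the sparse decomposition of Tao's $\eps$-removal lemma into $O(K2^{i/K})$ families of $(RN)^\gamma$-separated balls. This decomposition is what creates the ``reserve'': $\|V_i\|_{\tilde q}\sim H_i 2^{i/\tilde q}$ is geometrically decaying in $i$ for $\tilde q>q$ whenever $V\in L^{q,\infty}$, so the final geometric sum $\sum_i H_i 2^{i/\tilde q}2^{Ci/K}$ converges by choosing $\tilde q$ and $K$ appropriately. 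No such mechanism is available for the spatial shell decomposition. In short, your proposal identifies the right slogan (sacrifice the endpoint via an $\eps$-removal device) but implements it with a decomposition and off-diagonal estimate that do not work in the relevant range; the sparse/level-set decomposition together with the deterministic--probabilistic interpolation is the missing idea.
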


\begin{corollary}\label{cor. of thm. 3}
For $q<d+1$, we have
\begin{align*}
\sup_z\frac{|z|^{q-\frac{d}{2}}}{\|V\|_q^q}<\infty
\end{align*}
almost surely. The supremum is taken over all eigenvalues $z=(\lambda+\I\epsilon)^2$ of $-\Delta+V_{\omega}$ with $|\epsilon|\ll \lambda$.
\end{corollary}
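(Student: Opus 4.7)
The plan is to upgrade Theorem~\ref{thm. 3} to an almost-sure statement via Borel--Cantelli. I would partition the positive $\lambda$-axis into dyadic shells $\Lambda_k=[2^k,2^{k+1})$, $k\in\Z$, and apply Theorem~\ref{thm. 3} on each shell with parameter $M_k=C(1+|k|)^{1/2}$, the constant $C$ being chosen so that $\sum_k\exp(-cM_k^2)<\infty$. A preliminary step is to extract from the proof of Theorem~\ref{thm. 3} a version of the bound that is uniform in $\lambda\in\Lambda_k$ (and in $\epsilon$ with $|\epsilon|\ll\lambda$); this should follow either from the underlying concentration estimate being a supremum bound to begin with, or by passing to a $\lambda$-net of cardinality polynomial in $|k|$ and absorbing the loss into $M_k$. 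Borel--Cantelli then produces, off a null set in $\omega$, an integer $k_0(\omega)<\infty$ such that for every $k\ge k_0(\omega)$ the conclusion of Theorem~\ref{thm. 3} holds on $\Lambda_k$ with constant $M_k$.

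Raising the bound to the $q$-th power and using $|z|\asymp\lambda^2$ (valid since $|\epsilon|\ll\lambda$), this gives
\[
  \frac{|z|^{q-d/2}}{\|V\|_q^q}\;\lesssim\;M_k^{\,q}\,\langle\lambda h\rangle^{dq/2}(\log\langle\lambda h\rangle)^{2q}
\]
for every eigenvalue $z=(\lambda+\I\epsilon)^2$ with $\lambda\in\Lambda_k$, $k\ge k_0(\omega)$. The finitely many shells $k<k_0(\omega)$ contribute only a bounded, $\omega$-dependent amount to the supremum, since there $\lambda\le 2^{k_0(\omega)}$.

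The main obstacle is to argue that the right-hand side above is actually uniformly bounded in $k$, i.e.\ that there cannot be eigenvalues with arbitrarily large $\lambda$. Rearranging, one has $\lambda^{2-d/q-d/2}\lesssim_\omega(\log\lambda)^2$ in the regime $\lambda\gtrsim 1/h$; this bounds $\lambda$ only when $2-d/q-d/2>0$, which is automatic in dimension $d=1$ and, when $q>2$, in dimension $d=2$, but fails for $d\ge 3$ throughout the range $q<d+1$. In the unfavorable regime, I would close the argument by exploiting the strict inequality $q<d+1$: either through Frank's deterministic bound~\eqref{LT bound} at some $q_0\le(d+1)/2$ applied to a suitable decomposition of $V$ (via a bad-set or scale decomposition), or by applying Theorem~\ref{thm. 3} at an auxiliary exponent $q'\in(q,d+1)$ followed by a Hölder/interpolation step. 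This is the place where the margin $(d+1)-q>0$ is consumed, and I expect it to be the most delicate part of the proof.
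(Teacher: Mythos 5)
You take a genuinely different route from the paper. The paper's argument is a two-line continuity-from-above observation: $\{S=\infty\}=\bigcap_M E_M$ with $E_M=\{S^{1/q}>M\}$ decreasing, so $\mathbf P(S=\infty)=\lim_{M\to\infty}\mathbf P(E_M)$, and the limit is $0$ because Theorem~\ref{thm. 3} is read as giving $\mathbf P(E_M)\le\exp(-cM^2)$. No dyadic decomposition in $\lambda$, no Borel--Cantelli, no net. Your version is an elaboration of essentially the same underlying probabilistic input, but packaged through a shell decomposition $\Lambda_k=[2^k,2^{k+1})$ with $M_k=C(1+|k|)^{1/2}$ and a Borel--Cantelli sum; this is more machinery than the paper invokes, and the intermediate uniformity-in-$\lambda$ step you correctly worry about (via a $\lambda$-net or a supremum bound) is not something the paper makes explicit either.

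Where your proposal and the paper diverge most substantively is in the observation you make in the final paragraph. You are right that raising Theorem~\ref{thm. 3} to the $q$-th power leaves a factor $\langle\lambda h\rangle^{dq/2}(\log\langle\lambda h\rangle)^{2q}$ on the right, and that the resulting inequality $\lambda^{2-d/q-d/2}\lesssim(\log\langle\lambda h\rangle)^{2}\cdot(\text{const})$ confines $\lambda$ only when $2-d/q-d/2>0$. Under the standing hypotheses \eqref{lower bound q} this exponent is positive for all admissible $q$ in $d=1$, positive only for $q>2$ in $d=2$, and never positive for $d\ge3$ in the range $q<d+1$. So for $d\ge 3$ (and the low-$q$ part of $d=2$) the bound from Theorem~\ref{thm. 3} does not by itself rule out eigenvalues with arbitrarily large $\lambda$, and hence does not by itself give $\mathbf P(E_M)\to0$. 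This is a real issue, and the paper's proof, as written, does not display an argument for it; it simply asserts $\lim\mathbf P(E_M)=0$.

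Your proposed step for closing the gap is, however, not yet a proof. Invoking Frank's deterministic bound~\eqref{LT bound} at a short-range exponent $q_0\le(d+1)/2$ requires $V\in L^{q_0}$, which is not implied by $V\in L^q$ for $q>q_0$ unless $V$ has compact support (or is otherwise controlled from below in integrability); in the Anderson setting~\eqref{Vomega}, $V\in L^q$ gives $V\in L^{q'}$ only for $q'\ge q$, i.e.\ the ``wrong'' direction. Passing to an auxiliary $q'\in(q,d+1)$ and applying Theorem~\ref{thm. 3} at $q'$ reproduces exactly the same $\langle\lambda h\rangle^{d/2}(\log\langle\lambda h\rangle)^{2}$ factor and the same sign problem with the exponent $2-d/q'-d/2$, so it does not obviously help. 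The margin $(d+1)-q>0$ is already consumed in the proof of Theorem~\ref{thm. 3} itself (via the sparse decomposition and $\tilde q>q$ there), so it is not clear there is slack left to spend. In short: your identification of the obstruction is correct and sharper than what the paper offers, but your sketch of a fix does not close it, and the route you choose (dyadic shells plus Borel--Cantelli) is not what the paper's terse proof does.
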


\begin{proof}
Denote the supremum by $S$, and consider the events $E_M=\{S^{1/q}>M\}$. Since $E_{M}\supset E_{M+1}$ and $\mathbf{P}(E_{M_0})<\infty$, we have 
\begin{align*}
\mathbf{P}(S=\infty)=\lim_{M\to\infty}\mathbf{P}(E_M)=0.
\end{align*}
\end{proof}

\begin{remark}
The proof shows that Theorem \ref{thm. 3} (and hence Corollary \ref{cor. of thm. 3}) actually hold with $\|V\|_{L^q}$ replaced by the (smaller) Lorentz norm $\|V\|_{L^{q,\infty}}$. 
\end{remark}

The key technical elements in this work are estimates on certain ``elementary operators'', roughly of the form
\begin{align}\label{elem. op. intro}
R_0^{1/2}V_{\omega}R_0^{1/2},
\end{align}
where $R_0$ is the free resolvent at a fixed (complex) energy and $V_{\omega}$ is supported on a ball of radius $R>1$. In $d=2$, and in the discrete case (i.e.\ when $\Delta$ is replaced by the discrete Laplacian), Schlag--Shubin--Wolff \cite{MR1979773} proved\footnote{This is roughly the content of \cite[Lemma 3.9]{MR1979773}. Strictly speaking, the half powers of the resolvent are replaced by Fourier restriction and extension operators (or some mollified versions thereof), see also \cite[(1.12)]{MR1877824}.} that
the norm of these operators is bounded by a power of $\log R$. Their proof used in an essential way that the level sets corresponding to the symbol of $\Delta$ (the discrete Laplacian) are curved. Bourgain \cite{MR1877824} gave  a different proof using entropy bounds. His result is stated in $d=2$ but works in any dimension since it does not require curvature of the level sets (for the discrete Laplacian, these sets are not curved in higher dimensions). Motivated by work of Rodnianski--Schlag \cite{MR1941087}, he uses these bounds to prove almost sure existence and uniqueness of wave operators and a.c.\ spectrum (for energies away from the edges of the spectrum and zero). The result shaves off half a power of pointwise decay compared to the classical (deterministic) Agmon--Kato--Kuroda theory. In a follow-up work \cite{MR2083389}, Bourgain combined his method with the two-dimensional Stein--Tomas restriction theorem to obtain the same conclusion for potentials in $\langle x\rangle^{-\delta}\ell^3(\Z^2)$ ($\delta>0$ arbitrary). Note that there is a gap between the pointwise decay $\langle x\rangle^{-1/2}$ and $\ell^3(\Z^2)$. Bourgain \cite{MR2083389} observes that this gap cannot be overcome if one works with operators of the form \eqref{elem. op. intro} since the corresponding bounds (involving the $\ell^{3/2}(\Z^2)$ norm of the potential) are saturated (up to logarithms) by a Knapp example. Since the argument in \cite{MR2083389} is only sketched and is only stated in the two-dimensional discrete case, we will provide a complete proof of the optimality of our estimates (for the continuum multi-dimensional case) in Appendix \ref{Appendix B}\footnote{Bourgain's ideas and his Knapp example were also explained in a talk of Wilhelm Schlag at the Institute for Advanced Study on March 29, 2017.}
%
A representative (and simplified) example of these estimates, when $\lambda$ and $h$ are of unit size, is that
\begin{align}\label{elem. op. intro bound}
\|R_0^{1/2}V_{\omega}R_0^{1/2}\|\lesssim (\log R)^{\mathcal{O}(1)}\|V\|_{d+1}
\end{align}
with high probability (see Lemma \ref{lemma local resolvent bound} for a precise statement). Via a Born series argument (see Section \ref{section Born series} for details) this bound lead to a proof of Theorem \ref{thm. 1}. The proof of Theorem 2 then follows by a straightforward decomposition of the potential into dyadic shells $|x|\asymp 2^k$, similarly as in Bourgain's works \cite{MR1877824,MR2083389}. The proof of Theorem \ref{thm. 3} requires more effort and the argument presented in Section \ref{section Local to global arguments} is new to the best of our knowledge. The technique\footnote{Although Bourgain was almost certainly aware of these techniques, he did not bother to remove the logarithmic losses.} is reminiscent of an ``epsilon removal lemma" in the context of Fourier restriction theory (see e.g.\ \cite{MR1666558}). However, the technical implementation is a bit different since we are working with multilinear bounds (and with the resolvent instead of the Fourier restriction operator). 

While the bounds \eqref{elem. op. intro} are optimal (up to logarithms) in the sense that the Lebesgue exponent $d+1$ cannot be increased, it is an interesting open problem whether our eigenvalue estimates (say in the form of Corollary \ref{cor. of thm. 3}) are optimal. This problem is connected to a remark of Bourgain in \cite{MR2083389} that contains the idea of renormalizing away the self-energy interactions and then control the Born series via the sharp two-dimensional Fourier restriction theory of Carleson--Sjölin and Zygmund. This would amount to an $\ell^4(\Z^2)$ bound on the potential and would be natural and optimal from the point of view of restriction theory. A rigorous implementation of this idea seems difficult and has not been done so far, to the best of our knowledge.

\subsection*{Notation}
We write $A\lesssim B$ for two non-negative quantities $A,B\geq 0$ to indicate
that there is a constant $C>0$ such that $A\leq C B$. 
The dependence of the constant on fixed parameters like $d$ and $q$ is usually omitted (except in Section \ref{section Local to global arguments}).
The notation $A\asymp B$ means $A\lesssim B\lesssim A$.
The product measure associated to the $\omega_j$ is denoted by $\mathbf{P}$ and the expectation by $\mathbf{E}$. We denote the $L^p$ norm of a function $f$ in $\R^d$ by $\|f\|_{L^p(\R^d)}$. If the function is defined on a countable set $\Lambda$ we write $\|f\|_{\ell^p(\Lambda)}=(\sum_{\nu\in\Lambda}|f(\nu)|^p)^{1/p}$. If $\Lambda$ is finite, we also set $\|f\|_{\ell^p_{\rm av}(\Lambda)}=(|\Lambda|^{-1}\sum_{\nu\in\Lambda}|f(\nu)|^p)^{1/p}$. If it is clear from the context which norm is meant we sometimes use the abbreviation $\|f\|_p$. If $T:X\to Y$ is a bounded linear operator between two Banach spaces $X$ and $Y$,
we denote its operator norm by $\|T\|_{X\to Y}$. The indicator function of a set $\Omega\subset \R^d$ is denoted by $\mathbf{1}_{\Omega}$. For $1\leq p\leq\infty$ we denote its Hölder conjugate by $p'=(1-1/p)^{-1}$. An arbitrary ball of radius $R$ will be denoted by $B_R$, without specifying its center. We use the convention $\hat{f}(\xi)=\int_{\R^d}\e(-x\cdot\xi)f(x)\rd x$ for the Fourier transform of $f$, where $\e(x)=\e^{2\pi\I x}$, and $(f)^{\vee}(x)=\int_{\R^d}\e(x\cdot\xi)f(\xi)\rd \xi$ for the inverse Fourier transform.
Moreover, we recall the notation $\langle x\rangle=2+|x|$.

\subsection*{Organization}
In Section \ref{section Born series} we outline the rough top down strategy to prove our main results (see Proposition \ref{prop. spectral radius <1} for a summary). In Section \ref{Section Localization and discretization} we collect basic facts related to the uncertainty principle and recall the Stein-Tomas theorem for a discrete version of the Fourier extension operator that will play a major technical role in the proofs of the estimates in Section \ref{Section local bounds}. Section \ref{Section Randomization} is a short summary of probabilistic tools that will be used in the article. Section \ref{Section Entropy bound} fleshes out Bourgain's key idea of using entropy bounds. Section \ref{Section local bounds} contains the main local estimates and the completion of the proof of Theorem \ref{thm. 1}. Finally, in Section \ref{section Local to global arguments}, the local estimates are converted to global ones, leading to the proofs of Theorems \ref{thm. 2} and \ref{thm. 3}.

\section{Born series}\label{section Born series}
The proof of the eigenvalue estimates starts with the standard observation that $z\in\C\setminus[0,\infty)$ is an eigenvalue if and only if $I+R_0(z)V$ fails to be invertible as a bounded operator. This follows from the identity
\begin{align}\label{resolvent identity}
-\Delta+V-z=(-\Delta-z)(I+R_0(z)V).
\end{align}
Here we denoted the free resolvent operator $(-\Delta-z)^{-1}$ by $R_0(z)$ and we omitted the subscript $\omega$ on $V$. Similarly, we will denote the perturbed resolvent operator $(-\Delta+V-z)^{-1}$ by $R(z)$. 
To avoid confusion between the deterministic and the random potential we focus our attention on the Anderson type potentials \eqref{Vomega}.
In this case, the assumption that $V\in L^q(\R^d)$ already implies that $V$ is bounded (this follows from \eqref{Lq norm deterministic} and the fact that the $\ell^p$ spaces are nested). In particular, $R_0(z)V$ is a bounded operator. In the general case \eqref{randomization of deterministic V}, one truncates the potential at some fixed large level. Since the estimates of Theorems \ref{thm. 1}--\ref{thm. 3} are independent of the $L^{\infty}$ norm of $V$ and the truncated Schrödinger operator converges to the untruncated one in the norm resolvent sense, there is no loss of generality in assuming that the deterministic potential is bounded. In the following, we assume that $V$ is supported on a ball of radius $R$, i.e.\ the setting of Theorem \ref{thm. 1}. 
The case where $V$ is not compactly supported (Theorems \ref{thm. 2} and \ref{thm. 3}) will be considered in Section \ref{section Local to global arguments}.

Returning to \eqref{resolvent identity}, we see that $z$ cannot be an eigenvalue if the Born series 
\begin{align}\label{Born series}
R(z)=\sum_{n\in\N}(-1)^n[R_0(z)V]^nR_0(z)
\end{align}
converges, which is the case if the spectral radius of $R_0V$ is less than $1$. Consider the following multilinear expansion (omitting $z$),
\begin{align}\label{multilinear expansion Born series}
[R_0V]^n=\sum_{\sigma_1,\ldots,\sigma_n}R_0^{\sigma_1}VR_0^{\sigma_2}V\ldots R_0^{\sigma_n}V
\end{align}
where $\sigma_j\in\{\rm{low},\rm{high}\}$. Here, $R_0^{\rm low}$ is the resolvent (smoothly) localized to frequencies in $B(0,2)$ and $R_0^{\rm high}=R_0-R_0^{\rm low}$.
Since we are dealing with scale-invariant estimates, we may assume without loss of generality that $\lambda=1$, hence $z=(1+\I \epsilon)^2$.
Then each summand is a composition of operators of the form
$C^{(\delta_2)}VC^{(\delta_1)}$, where $C^{(\delta)}$ denotes a function satisfying a bound
\begin{align}\label{Cdelta bound}
|C^{(\delta)}(\xi)|\leq (||2\pi \xi|^2-1|+\delta)^{-1/2}
\end{align}
and the corresponding Fourier multiplier is denoted by the same symbol.
Clearly, the bound
\eqref{Cdelta bound} holds with $\delta=1$ for $C^{(\delta)}=(R_0^{\rm high})^{1/2}$ or $C^{(\delta)}=|R_0^{\rm high}|^{1/2}$. 
In Section~\ref{subsection smoothing} we will show that \eqref{Cdelta bound} holds with $\delta=1/R$ if $C^{(\delta)}$ is a mollification of $(R_0^{\rm low})^{1/2}$ or $|R_0^{\rm low}|^{1/2}$ at scale $1/R$. Such a mollification can always be performed (except for the first resolvent in the Born series, but this does not affect convergence), due to the localizing effect of the potential, which we assumed to be supported in a ball of radius $R$. 
The spectral radius is given by  Gelfand's formula, $\spr(R_0V)=\lim_{n\to\infty}\|[R_0V]^n\|^{1/n}$. Thus, in view of the previous discussion, we have
$\spr(R_0V)\leq \sup \|C^{(\delta_2)}VC^{(\delta_1)}\|$, where the supremum is taken over all functions satisfying~\eqref{Cdelta bound}. We will ignore the high frequency part of the resolvent $R_0^{\rm high}$
from now on since there are obvious elliptic estimates available for this part. We may thus restrict our attention to functions as in \eqref{Cdelta bound} that are compactly supported in $B(0,2)$. We summarize the observations of this Section in the following proposition.

\begin{proposition}\label{prop. spectral radius <1}
Let $z=(1+\I \epsilon)^2$, with $|\epsilon|\ll 1$. Let $V$ be supported in a ball of radius $R$. If (for a given realization of $\omega$)
\begin{align}\label{elementary operators}
\|C^{(\delta_2)}VC^{(\delta_1)}\|\leq c<1
\end{align}
for all functions $C^{(\delta_i)}$, $i=1,2$, satisfying~\eqref{Cdelta bound} with $\delta_1,\delta_2=1/R$ and supported in $B(0,2)$, then $z$ is not an eigenvalue of $-\Delta+V$. 
\end{proposition}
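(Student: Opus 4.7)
\medskip

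The plan is to make rigorous the heuristic already laid out before the statement. First, I would invoke the resolvent identity \eqref{resolvent identity}, which shows that $z\in \C\setminus[0,\infty)$ is an eigenvalue of $-\Delta+V$ if and only if $I+R_0(z)V$ is not invertible on $L^2(\R^d)$. Since the Born series \eqref{Born series} converges in operator norm whenever the spectral radius of $R_0(z)V$ is strictly less than $1$, it suffices to prove $\spr(R_0(z)V)<1$. Using Gelfand's formula $\spr(R_0V)=\lim_n\|[R_0V]^n\|^{1/n}$, I would reduce the task to uniform control of $\|[R_0V]^n\|$ of the form $\|[R_0V]^n\|\leq K\,c^n$ for some $c<1$ and $n$-independent constant $K$.

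Next I would expand $[R_0(z)V]^n$ via the multilinear decomposition \eqref{multilinear expansion Born series} into $2^n$ terms indexed by choices $\sigma_j\in\{\mathrm{low},\mathrm{high}\}$. For each factor I would write $R_0^{\sigma_j}=A_j^*A_j'$, where $A_j,A_j'$ are half-power operators carrying Fourier multipliers of the form $(R_0^{\sigma_j})^{1/2}$ or $|R_0^{\sigma_j}|^{1/2}$; up to the first and last resolvent in the chain, this groups the product into $n$ blocks of the form $C^{(\delta_2)}VC^{(\delta_1)}$ (the boundary resolvent factors contribute only fixed $L^2$-bounded multipliers and can be absorbed into the prefactor $K$). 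For blocks where both flanking multipliers are the high-frequency pieces, the bound \eqref{Cdelta bound} holds with $\delta=1$ by elliptic estimates, and these factors can be treated by trivial Bernstein/Fourier-multiplier estimates. For blocks involving low-frequency pieces, I would invoke the smoothing result promised in Section~\ref{subsection smoothing}: because $V$ is supported in $B_R$, we may replace $(R_0^{\mathrm{low}})^{1/2}$ and $|R_0^{\mathrm{low}}|^{1/2}$ in the sandwich $\cdots V(\cdot)V\cdots$ by mollifications at scale $1/R$, with an error that is negligible at the operator-norm level. After this replacement each low-frequency half-power is dominated by a symbol $C^{(\delta)}$ obeying \eqref{Cdelta bound} with $\delta=1/R$ and supported in $B(0,2)$.

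With this in hand, every block $C^{(\delta_2)}VC^{(\delta_1)}$ falls within the hypothesis \eqref{elementary operators}, so each contributes a factor at most $c<1$ to the operator norm. Summing the $2^n$ terms of the multilinear expansion and applying Gelfand's formula gives
\begin{equation*}
\spr(R_0(z)V)\leq \limsup_{n\to\infty}(K\cdot 2^n c^n)^{1/n}=2c,
\end{equation*}
so by replacing $c$ by $c/3$ (which amounts to renormalizing the constant in \eqref{elementary operators}, or more cleanly by recording the dimensional constant coming from the high-frequency estimates into $K$) one obtains $\spr(R_0(z)V)<1$, and hence $z$ is not an eigenvalue.

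The main obstacle I anticipate is not the formal assembly above but justifying the mollification step: one must argue that post-composition of $(R_0^{\mathrm{low}})^{1/2}$ with a sharp Fourier cutoff at scale $1/R$ commutes, up to an operator-norm error that is summable in $n$, with the multiplication operator $V=V_\omega$ supported in $B_R$. This is an uncertainty-principle statement (functions supported in a ball of radius $R$ are essentially bandlimited at frequency scale $1/R$), and its quantitative form is exactly what Section~\ref{subsection smoothing} is designed to deliver; the rest of the argument is a bookkeeping of the multilinear expansion.
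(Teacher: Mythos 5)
Your proof follows the paper's strategy, which is laid out in Section~\ref{section Born series} just before the statement (the proposition is presented there as a summary of that discussion rather than proved afresh): resolvent identity, Gelfand's formula, multilinear low/high frequency expansion, half-power sandwiching of each resolvent factor, mollification at scale $1/R$ exploiting the compact support of $V$ (Section~\ref{subsection smoothing}), and separate elliptic estimates for the high-frequency part. You are in fact more careful than the paper in noting the $2^n$ combinatorial factor arising from the low/high split --- the paper silently asserts $\spr(R_0V)\leq\sup\|C^{(\delta_2)}VC^{(\delta_1)}\|$ without keeping track of this factor --- and your device of tightening the constant $c$ is the standard and harmless remedy, since in every application of the proposition one eventually takes $c$ as small as desired.

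Two small corrections. First, the smoothing step~\eqref{smoothing identity} is an \emph{exact} identity for the multiplier sandwiched between spatial indicator functions, not an approximation or commutator estimate with error; the only thing one neglects is the rapidly decaying tail of $\gamma_R\ast m$ outside a neighborhood of $M$, and the paper explicitly sets this aside at the end of Section~\ref{subsection smoothing}. Second, your ``trivial Bernstein/Fourier-multiplier estimates'' for blocks containing high-frequency multipliers are no less a hand-wave than the paper's ``obvious elliptic estimates'': such bounds control the block in terms of $\|V\|_{L^{d/2}}$, which is boundedness rather than automatic smallness, and a fully rigorous treatment would need to confirm smallness of the high-frequency contribution in the regime where the proposition is actually invoked. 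Neither of these is a deficiency of your reconstruction specifically --- both glosses are already present in the paper.
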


We refer to operators of the form \eqref{elementary operators} as ``elementary operators" since they form the building blocks of the Born series. We prove norm estimates on these and related operators in Section \ref{Section local bounds}. These estimates are the key technical elements in this work.

\begin{remark}
Strictly speaking, the previous argument is only valid for $\epsilon\neq 0$, but there are techniques to extend this to embedded eigenvalues ($\epsilon=0$), see e.g.\ \cite[Prop. 3.1]{MR3713021}.
\end{remark}

\begin{remark}
Later on, we will assume that all functions $C^{(\delta)}$ are supported in a small neighborhood of the unit sphere. This does not affect the validity of the above argument.
\end{remark}

\section{Localization and discretization}\label{Section Localization and discretization}

\subsection{Localization in momentum space}
Denote by $\mathcal{Q}_h$ the collection of all cubes $Q_h$ of sidelength $h$.
Define the weight function
\begin{align}\label{def. wQ}
w_{Q_h}(x)=(1+h^{-1}\dist(x,Q_h))^{-100d},\quad x\in \R^d,\quad Q_h\in\mathcal{Q}_h.
\end{align}

\begin{lemma}\label{lemma locally constant}
Let $v\in\mathcal{S}(\R^d)$ and assume that $\hat{v}$ is supported in $B(0,1/h)$. Then $v$ is locally constant on cubes $Q_h$ of sidelength $h$ in the sense that 
\begin{align*}
\|v\|_{L^{\infty}(Q_h)}\lesssim |Q_h|^{-1}\|v\|_{L^1(w_{Q_h})}.
\end{align*}
\end{lemma}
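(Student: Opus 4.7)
The plan is to exploit the Fourier support condition through a reproducing formula. Fix a bump $\phi\in C_c^{\infty}(\R^d)$ with $\phi\equiv 1$ on $B(0,1)$ and $\supp\phi\subset B(0,2)$, and set $K=\check\phi\in\mathcal S(\R^d)$. Since $\hat v$ is supported in $B(0,1/h)$, multiplication by $\phi(h\,\cdot)$ leaves $\hat v$ unchanged, and taking inverse Fourier transforms yields the representation
\begin{equation*}
v(y)=(v\ast K_h)(y),\qquad K_h(x):=h^{-d}K(x/h).
\end{equation*}

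Next I would use the Schwartz decay of $K$ in the form $|K(x)|\lesssim_N (1+|x|)^{-N}$ for any $N\in\N$, specialized to $N=100d$, giving
\begin{equation*}
|K_h(y-x)|\lesssim h^{-d}\bigl(1+|y-x|/h\bigr)^{-100d}.
\end{equation*}
The geometric step is the pointwise comparison: for every $y\in Q_h$ and every $x\in\R^d$ we have $|y-x|\ge \dist(x,Q_h)$ (trivially, since $\dist(x,Q_h)$ is the infimum of $|z-x|$ over $z\in Q_h$), hence
\begin{equation*}
\bigl(1+|y-x|/h\bigr)^{-100d}\le \bigl(1+h^{-1}\dist(x,Q_h)\bigr)^{-100d}=w_{Q_h}(x).
\end{equation*}

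Combining these two ingredients, for any $y\in Q_h$,
\begin{equation*}
|v(y)|\le \int_{\R^d}|v(x)|\,|K_h(y-x)|\,\rd x\lesssim h^{-d}\int_{\R^d}|v(x)|\,w_{Q_h}(x)\,\rd x=|Q_h|^{-1}\|v\|_{L^1(w_{Q_h})},
\end{equation*}
since $|Q_h|=h^d$. Taking the supremum over $y\in Q_h$ yields the claim. I do not expect any serious obstacle: the only non-routine ingredients are (i) choosing $\phi$ so that $\phi(h\,\cdot)\equiv 1$ on $\supp\hat v$ (hence the normalization of $K_h$) and (ii) the geometric inequality $|y-x|\ge\dist(x,Q_h)$ that lets one swap the pointwise weight against $y$ for the weight $w_{Q_h}$ intrinsic to the cube.
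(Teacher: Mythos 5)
Your proof is correct and follows essentially the same route as the paper: reproduce $v$ by convolving with a rescaled Schwartz kernel (obtained from a Fourier-side bump equal to $1$ on the support of $\hat v$), then dominate the kernel by the weight $w_{Q_h}$. The paper first rescales to $h=1$ and packages the kernel-versus-weight comparison into a finite constant $\kappa_w$, whereas you keep $h$ throughout and spell out the elementary inequality $|y-x|\ge\dist(x,Q_h)$ explicitly; these are only presentational differences.
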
 

\begin{proof}
By scaling, it suffices to prove this for $h=1$.
Choose $\eta\in \mathcal{S}(\R^d)$ such that $\eta=1$ on $B(0,1)$. Then we have $\hat{v}=\eta\hat{v}$ and hence $v=(\eta)^{\vee}\ast v$. Since $(\eta)^{\vee}\in \mathcal{S}(\R^d)$, it follows that
\begin{align*}
\kappa_w=\sup_{Q\in \mathcal{Q}_1}\sup_{(x,y)\in Q\times\R^d}|(\eta)^{\vee}(x-y)|w_Q(y)^{-1}<\infty,
\end{align*}
where the first supremum is taken over all cubes of sidelength one. Thus, for any cube $Q$ of sidelength one and for $x\in Q$, we have 
\begin{align*}
|v(x)|\leq \int_{\R^d} |(\eta)^{\vee}(x-y)||v(y)|\rd y\leq \kappa_w\|v\|_{L^1(w_Q)}.
\end{align*}
Taking the supremum over $x\in Q$ proves the claim.
\end{proof}

\begin{lemma}\label{lemma Ph}
Let $v\in\mathcal{S}(\R^d)$ and assume that $\hat{v}$ is supported in $B(0,1/h)$.
Let $\Lambda_h\subset \R^d$ be a set of $h$-separated points. Then for any $p\geq 1$, we have
\begin{align*}
\|v\|_{\ell^p(\Lambda_h)}\lesssim h^{-d/p}\|v\|_{L^p(\R^d)}.
\end{align*}
\end{lemma}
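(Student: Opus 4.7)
The plan is to deduce this from Lemma \ref{lemma locally constant}, which says that $v$ is essentially constant at scale $h$, combined with a straightforward $\ell^p$/$L^p$ comparison. The key geometric input is that an $h$-separated set has locally bounded density, which allows us to turn a pointwise discrete sum into an integral.

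First, I would fix a tiling $\{Q_h^\nu\}_\nu$ of $\R^d$ by cubes of sidelength $h$, and for each $\lambda\in\Lambda_h$ pick a cube $Q_h^\lambda$ from this tiling containing $\lambda$. By Lemma \ref{lemma locally constant} applied to this cube,
\begin{equation*}
|v(\lambda)| \lesssim h^{-d}\int_{\R^d}|v(y)|\,w_{Q_h^\lambda}(y)\,\rd y.
\end{equation*}
For $p=1$ this already suffices: summing in $\lambda$ and swapping sum and integral gives $\|v\|_{\ell^1(\Lambda_h)}\lesssim h^{-d}\int|v(y)|\bigl(\sum_\lambda w_{Q_h^\lambda}(y)\bigr)\rd y$, so the bound follows once we show $\sum_\lambda w_{Q_h^\lambda}(y)\lesssim 1$ uniformly in $y$.

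For general $p\in[1,\infty)$, I would raise the pointwise inequality to the $p$-th power and apply Hölder's inequality with the probability measure $w_{Q_h^\lambda}\,\rd y/\|w_{Q_h^\lambda}\|_{L^1}$, using $\|w_{Q_h^\lambda}\|_{L^1}\asymp h^d$ from the rapid decay of $w$ built into \eqref{def. wQ}. This yields
\begin{equation*}
|v(\lambda)|^p\lesssim h^{-d}\int_{\R^d}|v(y)|^p\,w_{Q_h^\lambda}(y)\,\rd y,
\end{equation*}
and then summation over $\lambda\in\Lambda_h$ reduces matters again to the uniform bound on $\sum_\lambda w_{Q_h^\lambda}$. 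The case $p=\infty$ is immediate from Lemma \ref{lemma locally constant} (the factor $h^{-d/p}$ becomes $1$).

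The only real obstacle is the overlap estimate $\sum_{\lambda\in\Lambda_h}w_{Q_h^\lambda}(y)\lesssim 1$. Here I would use the $h$-separation: group the $\lambda$'s into annuli $\{\lambda : |\lambda-y|\in[kh,(k+1)h)\}$, note that separation forces the number of $\lambda$'s in such an annulus to be $\mathcal{O}((1+k)^{d-1})$, and bound the weight on this annulus by $(1+k)^{-100d}$. Summing the geometric-type series $\sum_{k\geq 0}(1+k)^{d-1}(1+k)^{-100d}$ produces a constant depending only on $d$, completing the proof.
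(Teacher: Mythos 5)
Your proof is correct and follows essentially the same route as the paper: both start from Lemma~\ref{lemma locally constant}, pass from an $L^1(w_Q)$ bound to an $L^p$ bound via a convexity step, and close by summing a rapidly convergent series coming from the weight~\eqref{def. wQ} together with the $h$-separation. The only cosmetic difference is that the paper decomposes $v=\sum_{Q'}v_{Q'}$ and applies H\"older on each piece, whereas you apply Jensen's inequality against the probability measure $w_{Q_h^\lambda}\,\rd y/\|w_{Q_h^\lambda}\|_{L^1}$ (using $\|w_{Q_h^\lambda}\|_{L^1}\asymp h^d$) and reduce to the overlap bound $\sum_\lambda w_{Q_h^\lambda}\lesssim 1$; the latter is a clean and slightly more explicit way of performing the same geometric-series summation, and your annulus-counting argument for the overlap is correct.
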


\begin{proof}
Again by scaling, we can assume $h=1$. Thus, let 
$\Lambda\subset \R^d$ be a set of $1$-separated points. Pick a collection of cubes $Q$ of sidelength one that cover $\Lambda$. By Lemma \ref{lemma locally constant}, 
\begin{align*}
\|v\|_{\ell^p(\Lambda)}^p=\sum_{\nu\in \Lambda}|v(\nu)|^p\lesssim 
\sum_{Q}\|v\|_{L^1(w_{Q})}^p,
\end{align*}
Write $v=\sum_{Q'}v_{Q'}$, where $v_{Q'}$ is supported on $Q'$. Then 
\begin{align*}
\|v_{Q'}\|_{L^1(w_{Q})}\leq (1+\dist(Q,Q'))^{-100d}\|v_{Q'}\|_{L^1(\R^d)}.
\end{align*}
By Hölder, $\|v_{Q'}\|_{L^1(\R^d)}\leq \|v_{Q'}\|_{L^p(\R^d)}$. Hence,
\begin{align*}
\sum_{Q}\|v\|_{L^1(w_{Q})}^p\lesssim \sum_{Q,Q'}(1+\dist(Q,Q'))^{-100dp}\|v_{Q'}\|_{L^p(\R^d)}^p\lesssim \|v\|_{L^p(\R^d)}^p,
\end{align*}
where we summed a geometric series in $Q$. 
\end{proof}

\subsection{Localization in position space}

We will make use of the following standard device in local restriction theory (see e.g.\ \cite[Lemma 1.26]{MR3971577}).

\begin{lemma}\label{lemma bump fct. phi}
There exists a bump function $\phi$ on $\R^d$ with $\supp\phi\subset B(0,1)$ and with non-negative Fourier transform satisfying $\mathbf{1}_{B(0,1)}\leq \hat{\phi}$. Moreover, $\hat{\phi}$ is an even function. 
\end{lemma}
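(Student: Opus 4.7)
The plan is to build $\phi$ as the self-convolution of a small, radial, non-negative mollifier, which automatically yields compact support and a non-negative, even Fourier transform. The only nontrivial point is then arranging the lower bound $\hat{\phi}\ge 1$ on $B(0,1)$, which will be achieved by rescaling the mollifier to a small enough scale.

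Concretely, I would start with a standard radial bump function $\psi\in C_c^\infty(\R^d)$ that is real, even, and non-negative, with $\supp\psi\subset B(0,1/2)$ and $\int_{\R^d}\psi\,\mathrm{d}x=1$. Then $\hat\psi$ is real-valued, even, and continuous with $\hat\psi(0)=1$. For a scale parameter $\delta\in(0,1]$, set $\psi_\delta(x)=\delta^{-d}\psi(x/\delta)$, so that $\supp\psi_\delta\subset B(0,\delta/2)$ and $\hat{\psi_\delta}(\xi)=\hat\psi(\delta\xi)$. Since $\hat\psi$ is continuous with $\hat\psi(0)=1$, there exists $\delta>0$ small enough that
\begin{equation*}
\hat{\psi_\delta}(\xi)=\hat\psi(\delta\xi)\ge \tfrac{1}{2}\qquad\text{for all }|\xi|\le 1.
\end{equation*}

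Now define $\phi:=4\,\psi_\delta\ast\psi_\delta$. Since $\psi_\delta$ is even, its convolution square has Fourier transform $\hat\phi(\xi)=4\,\hat{\psi_\delta}(\xi)^{2}$, which is manifestly non-negative and even. Moreover, $\supp\phi\subset \supp\psi_\delta+\supp\psi_\delta\subset B(0,\delta)\subset B(0,1)$, and on $B(0,1)$ we obtain $\hat\phi(\xi)\ge 4\cdot(1/2)^{2}=1$, giving the pointwise lower bound $\mathbf{1}_{B(0,1)}\le \hat\phi$.

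There is essentially no obstacle here: the argument is a one-line mollifier construction and the only thing to verify carefully is the continuity-plus-scaling step that produces the quantitative lower bound on $\hat{\psi_\delta}$ over the entire unit ball. If one wanted $\phi$ itself to be non-negative (not required by the statement, which only asks for a bump function), this would also be automatic because $\psi\ge 0$.
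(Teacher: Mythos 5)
Your proof is correct and is essentially the standard construction one finds in the cited reference (the paper itself does not supply a proof for this lemma, merely pointing to \cite[Lemma 1.26]{MR3971577}): take a real, even, nonnegative mollifier, rescale so its Fourier transform is bounded below near the origin, and then square on the frequency side (equivalently, self-convolve on the physical side) to force non-negativity of the Fourier transform while keeping compact support. All the details you need — that $\hat\psi$ is real, even, and equals $1$ at the origin, the scaling relation $\widehat{\psi_\delta}(\xi)=\hat\psi(\delta\xi)$, and the support bound $\supp(\psi_\delta\ast\psi_\delta)\subset B(0,\delta)$ — are verified correctly.
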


It is clear that the rescaled function $\phi_R(\xi)=R^d\phi(R\xi)$ satisfies
\begin{align*}
\supp \phi_R\subset B(0,R^{-1}),\quad \mathbf{1}_{B(0,R)}\leq \hat{\phi}_R.
\end{align*}

Let $M_{\lambda}=\{\xi\in \R^d:\,|\xi|=\lambda\}$, and consider the extension operator
\begin{align*}
\mathcal{E}_{\lambda}:L^2(M_{\lambda},\rd\sigma_{\lambda})\to L^{\infty}(\R^d),\quad (\mathcal{E}_{\lambda}g)(x)=(g\rd\sigma_{\lambda})^{\vee}(x),
\end{align*}
where $\sigma_{\lambda}$ is surface measure on $M_{\lambda}$. We write $\mathcal{E}\equiv\mathcal{E}_1$ and $M\equiv M_1$, $\sigma\equiv \sigma_1$. 

\subsection{Discrete Fourier extension operator}

\begin{definition}
Let ${\rm Discres}(M,p,2)$ be the best constant such that the following hold for each $R\geq 2$, each collection $\Lambda^*_R$ consisting of $1/R$-separated points on $M$, each sequence $a_{\nu}\subset \C$, each ball $B_R$ and each collection $\Lambda_1$ of $1$-separated points in $\R^d$:
\begin{align}\label{def. discres}
\|\sum_{\nu\in \Lambda^*_R}a_{\nu}\e(\nu\cdot x)\|_{\ell^{p'}(\Lambda_1\cap B_R)}\leq {\rm Discres}(M,p,2)R^{\frac{d-1}{2}}\|a_{\nu}\|_{\ell^2(\Lambda^*_R)}.
\end{align}
\end{definition}

\begin{proposition}\label{prop. continuous-discrete extension}
If $1\leq p\leq \infty$, then
\begin{align}\label{eq. continuous-discrete extension}
{\rm Discres}(M,p,2)\lesssim \|\mathcal{E}\|_{L^2(M,\rd\sigma)\to L^{p'}(\R^d)}.
\end{align}
Moreover, if $p\geq 2$, then the reverse inequality also holds.
\end{proposition}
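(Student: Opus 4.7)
The plan is to bridge the discrete extension with $\mathcal{E}$ by smoothing the discrete data. For the forward direction, given coefficients $(a_\nu)_{\nu\in\Lambda_R^*}$, I would thicken each Dirac mass at $\nu$ to a bump at scale $1/R$ using the function $\phi$ from Lemma~\ref{lemma bump fct. phi}, setting
\begin{equation*}
g(\xi) = \sum_{\nu\in\Lambda_R^*} a_\nu \phi_R(\xi-\nu), \qquad \phi_R(\xi)=R^d\phi(R\xi).
\end{equation*}
Then $g$ is supported in the $1/R$-neighborhood of $M$ (hence in $B(0,2)$) with $\|g\|_{L^2(\R^d)}^2 \asymp R^d\|a\|_{\ell^2}^2$ from the almost-disjointness of the bumps. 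A direct computation using the evenness of $\hat\phi$ yields $\check g(x) = \hat\phi_R(x) f(x)$ with $f(x)=\sum_\nu a_\nu \e(\nu\cdot x)$; since $\hat\phi_R \ge 1$ on $B_R$, one has $|f|\le |\check g|$ there, and Lemma~\ref{lemma Ph} applied to $\check g$ (whose Fourier transform $g$ sits in $B(0,2)$) converts the discrete norm into a continuous one:
$\|f\|_{\ell^{p'}(\Lambda_1\cap B_R)} \lesssim \|\check g\|_{L^{p'}(\R^d)}$.

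To bound $\|\check g\|_{L^{p'}}$ by the continuous $\mathcal{E}$-norm, I would pass to polar coordinates $\xi=r\omega$:
\begin{equation*}
\check g(x) = \int_{|r-1|\lesssim 1/R} r^{d-1}(\mathcal{E}g_r)(rx)\,dr, \qquad g_r(\omega)=g(r\omega).
\end{equation*}
Minkowski's inequality (valid since $p'\ge 1$), the scaling identity $\|(\mathcal{E}g_r)(r\cdot)\|_{L^{p'}}=r^{-d/p'}\|\mathcal{E}g_r\|_{L^{p'}}$ with $r\asymp 1$, and Cauchy--Schwarz in $r$ over an interval of length $O(1/R)$ then give
$\|\check g\|_{L^{p'}} \lesssim \|\mathcal{E}\|_{L^2(M)\to L^{p'}(\R^d)}\, R^{-1/2}\|g\|_{L^2(\R^d)} \lesssim \|\mathcal{E}\|_{L^2\to L^{p'}}\, R^{(d-1)/2}\|a\|_{\ell^2}$,
completing the forward direction.

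For the reverse direction when $p\ge 2$, I would dualize. The continuous bound becomes the restriction estimate $\|\hat f|_M\|_{L^2(M)} \le \|\mathcal{E}\|_{L^2\to L^{p'}}\|f\|_{L^p}$, and the discrete bound becomes $\|\hat h|_{\Lambda_R^*}\|_{\ell^2}\le {\rm Discres}(M,p,2)\cdot R^{(d-1)/2}\|h\|_{\ell^p(\Lambda_1\cap B_R)}$ with $\hat h(\omega)=\sum_k h_k\e(-\omega\cdot k)$. Given Schwartz $f$, choose $\Lambda_1=\Z^d+\tau$ and sample $h_k=f(k+\tau)$. Poisson summation expresses $\hat h(\nu)$ as $\sum_{m\in\Z^d}\hat f(\nu+m)\e(-m\cdot\tau)$, so $\hat h|_{\Lambda_R^*}$ captures $\hat f|_{\Lambda_R^*}$ up to controllable terms. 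Approximating $\|\hat f|_M\|_{L^2(M)}^2$ by its $1/R$-net Riemann sum $R^{-(d-1)}\|\hat f|_{\Lambda_R^*}\|_{\ell^2}^2$, and exploiting Fubini ($\int_\tau \sum_k|f(k+\tau)|^p d\tau = \|f\|_{L^p}^p$) to select a good $\tau$, yields $\|\hat f|_M\|_{L^2(M)}\lesssim {\rm Discres}\,\|f\|_{L^p}$. Density in $L^p$ then extends the estimate to all $f$, giving $\|\mathcal{E}\|_{L^2\to L^{p'}} \lesssim {\rm Discres}(M,p,2)$ by duality.

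The hard part will be making the reverse direction rigorous: the Poisson-summation tail, the Riemann-sum approximation of the $L^2(M)$-norm on a $1/R$-net, and the translate averaging all require uniform control as $R\to\infty$. The hypothesis $p\ge 2$ enters precisely at the averaging step, being the natural range in which the $\ell^p$-samples of $f$ on a unit-spaced lattice are compatible with its continuous $L^p$-norm and with the $\ell^2$-restriction of $\hat h$ to $\Lambda_R^*$.
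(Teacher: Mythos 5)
Your forward direction is correct, but it takes a genuinely different route from the paper. The paper reduces the entire proposition to Demeter's Prop.~1.29 (which establishes \eqref{eq. continuous-discrete extension} for the variant ${\rm Discres}'$ measured in $L^{p'}(B_R)$), and then proves the short comparison ${\rm Discres}\lesssim{\rm Discres}'$ via Lemma~\ref{lemma Ph} and the rapid decay of $\hat\phi_R$. You instead give a self-contained proof: after the same smoothing step $g(\xi)=\sum_\nu a_\nu\phi_R(\xi-\nu)$, $\check g=\hat\phi_R f$, you dispense with the $L^{p'}(B_R)$ intermediary and directly bound $\|\check g\|_{L^{p'}(\R^d)}$ by foliating $g$ into spherical shells, applying Minkowski, the $\|\mathcal{E}\|_{L^2(M)\to L^{p'}}$ bound on each shell, and Cauchy--Schwarz in the radial variable over an interval of length $O(1/R)$. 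The count $\|g\|_{L^2}^2\asymp R^d\|a\|_{\ell^2}^2$ from bounded overlap and the factor $R^{-1/2}$ from Cauchy--Schwarz correctly conspire to give $R^{(d-1)/2}$. This is a legitimate alternative that avoids the black-box appeal to Demeter's proposition; its small advantage is transparency (it makes visible where the $R^{(d-1)/2}$ comes from), while the paper's route is shorter because it outsources the analytic core.

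The reverse direction, however, has a genuine gap which you yourself partly flag. The Poisson-summation aliases are not ``controllable terms'' here. With unit-spaced sampling $h_k=f(k+\tau)$, you get $\hat h(\nu)=\sum_{m\in\Z^d}\hat f(\nu+m)\,\e((\nu+m)\cdot\tau)$, and for $\nu\in M$ (the \emph{unit} sphere) the alias frequencies $\nu+m$ with $|m|\in\{1,2,3\}$ land at $O(1)$ distance from $M$ --- exactly where $\hat f$ can be large, since we are trying to prove a restriction estimate for general $f\in L^p$ (or, equivalently, for $g\in L^2(M)$, whose extension has Fourier support on $M$ itself). There is no room to push the aliases to large frequency, because ${\rm Discres}$ is defined with $1$-separated $\Lambda_1$, not $\epsilon$-separated. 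Averaging over $\tau$ does kill the cross terms at the level of second moments, $\int_{[0,1)^d}|\hat h(\nu)|^2\,\rd\tau=\sum_m|\hat f(\nu+m)|^2$, but this (i) only lower-bounds the $\ell^2$ norm on average, which does not produce a single $\tau$ simultaneously good for the $\ell^2$ lower bound and the $\ell^p$ upper bound $\sum_k|f(k+\tau)|^p$, and (ii) the aliased contribution $R^{-(d-1)}\sum_{\nu}\sum_{m\neq 0}|\hat f(\nu+m)|^2$ scales like $\|f\|_{L^2}^2$, which is not controlled by $\|f\|_{L^p}$ for $p<2$ and is not obviously negligible even for $p\geq 2$. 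The paper avoids all of this: the hard implication $\|\mathcal{E}\|\lesssim{\rm Discres}'$ is quoted from Demeter (where sampling happens on $M$, not in $\R^d$, so there is no aliasing problem), and only the comparatively soft inequality ${\rm Discres}'\lesssim{\rm Discres}$ is proved --- by decomposing $B_R$ into unit cubes, writing $\|f\|_{L^{p'}(B_R)}$ as $\bigl(\int_{B(0,10)}\sum_j|f(x_j+y)|^{p'}\,\rd y\bigr)^{1/p'}$, and applying the defining inequality of ${\rm Discres}$ to the translated $1$-separated set $\{x_j+y\}$ uniformly in $y$. If you want a self-contained reverse direction, that translate-averaging argument over the $1$-separated set (not Poisson summation on a lattice) is the workable idea, combined with a $1/R$-net sampling of the $L^2(M)$-datum and a limit $R\to\infty$; it is in this last step that $p\geq 2$ is actually used.
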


\begin{proof}
The claim is a special case of \cite[Prop. 1.29]{MR3971577}, with one small difference. There, ${\rm Discres}(M,p,2)$ is defined with the $L^{p'}(B_R)$ norm in the left hand side of~\eqref{def. discres}. Thus, let ${\rm Discres}'(M,p,2)$ be the best constant in the inequality
\begin{align}\label{Demeter discres}
\|\sum_{\nu\in \Lambda^*_R}a_{\nu}\e(\nu\cdot x)\|_{L^{p'}(B_R)}\leq {\rm Discres}'(M,p,2)R^{\frac{d-1}{2}}\|a_{\nu}\|_{\ell^2(\Lambda^*_R)}.
\end{align}
Then \cite[Prop. 1.29]{MR3971577} asserts that the proposition holds with ${\rm Discres}'(M,p,2)$ in place of ${\rm Discres}(M,p,2)$.
Thus, \eqref{eq. continuous-discrete extension} follows once we show that
\begin{align}\label{claim discres'<discres}
{\rm Discres}'(M,p,2)\gtrsim {\rm Discres}(M,p,2).
\end{align}
Without loss of generality we may assume that $B_R=B(0,R)$. If we set
\begin{align*}
f(x)=\sum_{\nu\in \Lambda^*_R}a_{\nu}\e(\nu\cdot x),\quad \mbox{then}\quad
\F(f\hat{\phi}_R)(\xi)=\sum_{\nu\in \Lambda^*_R}a_{\nu}\phi_R(\xi+\nu),
\end{align*}
where $\phi_R$ is as before and $\mathcal{F}$ denotes the Fourier transform. Note that $\F(f\hat{\phi}_R)=\hat{f}\ast \phi_R$ is supported in an $1/R$-neighborhood of $M$. 
In particular, it is supported on the ball $B(0,2)$. Thus, for any collection $\Lambda_1$ of $1$-separated points in $\R^d$,
\begin{align*}
\|f\|_{\ell^{p'}(\Lambda_1\cap B_R)}\leq \|f\hat{\phi}_R\|_{\ell^{p'}(\Lambda_1)}\lesssim \|f \hat{\phi}_R\|_{L^{p'}(\R^d)},
\end{align*}
where we used $\hat{\phi}_R\geq \mathbf{1}_{B_R}$ in the first inequality and Lemma \ref{lemma Ph} in the second. 
By a partition of unity and a sparsification argument we may assume that $f$ is supported on a disjoint union of balls of radius $R$. By the rapid decay of $\hat{\phi}_R$ and by the definition of ${\rm Discres}'(M,p,2)$,
\begin{align*}
\|\hat{\phi}_Rf\|_{L^{p'}(\R^d)}\lesssim_N \sum_{j=1}^{\infty}j^{-N}\|f\|_{L^{p'}(B(x_j,R))}\lesssim {\rm Discres}'(M,p,2)R^{\frac{d-1}{2}}\|a_{\nu}\|_{\ell^2(\Lambda^*_R)},
\end{align*}
where we used that \eqref{Demeter discres} holds uniformly in the centers of the balls. Combining the last two estimates yields \eqref{claim discres'<discres}.

To prove the reverse inequality to \eqref{eq. continuous-discrete extension}, we may assume that $B_R=B(0,R)$. By \cite[Prop. 1.29]{MR3971577} it suffices to prove the reverse inequality to \eqref{claim discres'<discres}. Let $\Lambda_1$ be a $1$-net of points $x_j\in B_R$. Let $f(x)$ be defined as above. Without loss of generality we may assume that $f$ is supported on a disjoint collection of balls $B(x_j,10)$. Then
\begin{align*}
\|f\|_{L^{p'}(B_R)}&=(\sum_j\|f\|^{p'}_{L^{p'}(B(x_j,10))})^{1/p'}=(\int_{B(0,10)}\sum_j|f(x_j+y)|^{p'}\rd y)^{1/p'}
\\
&\lesssim {\rm Discres}(M,p,2)R^{\frac{d-1}{2}}\|a_{\nu}\|_{\ell^2(\Lambda^*_R)},
\end{align*}
where we used that \eqref{def. discres} holds for each collection $x_j+y$ of $1$-separated points, uniformly in $y$.
\end{proof}

\subsection{Stein-Tomas theorem}

The following is an immediate consequence of the 
Stein-Tomas theorem and Proposition \ref{prop. continuous-discrete extension} (see also \cite[Cor. 1.30]{MR3971577}).

\begin{proposition}\label{prop. Stein-Tomas Discres}
Let $p'\geq 2(d+1)/(d-1)$. Then ${\rm Discres}(M,p,2)\lesssim 1$.
\end{proposition}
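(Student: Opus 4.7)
The plan is simply to chain together two results already in hand: the classical Stein--Tomas restriction theorem for the unit sphere $M\subset\R^d$, and the comparison bound from Proposition~\ref{prop. continuous-discrete extension}.

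First, I would invoke Stein--Tomas, which asserts that the extension operator
\[
\mathcal{E}:L^2(M,\rd\sigma)\to L^{p'}(\R^d),\qquad (\mathcal{E}g)(x)=(g\rd\sigma)^{\vee}(x),
\]
is bounded precisely when $p'\geq 2(d+1)/(d-1)$, with operator norm controlled by a constant depending only on $d$. This is where the curvature of $M$ (nonvanishing Gaussian curvature of the sphere) is used; the endpoint exponent $2(d+1)/(d-1)$ is exactly the one appearing in the hypothesis.

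Second, applying Proposition~\ref{prop. continuous-discrete extension} with this choice of $p$ yields
\[
{\rm Discres}(M,p,2)\lesssim \|\mathcal{E}\|_{L^2(M,\rd\sigma)\to L^{p'}(\R^d)}\lesssim 1,
\]
which is the claim. No additional work is required: Proposition~\ref{prop. continuous-discrete extension} was precisely designed to transfer the continuous extension estimate to the discrete version that appears in the definition of ${\rm Discres}(M,p,2)$, and Stein--Tomas supplies the continuous input.

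Since the proof is a direct combination of two ready-made statements, there is no real obstacle; the only conceptual point worth flagging is that the range $p'\geq 2(d+1)/(d-1)$ in the hypothesis exactly matches the Stein--Tomas range, so the proposition is sharp with respect to the Stein--Tomas exponent.
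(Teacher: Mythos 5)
Your proof is correct and is exactly the paper's argument: the paper states the proposition as an immediate consequence of the Stein--Tomas theorem combined with Proposition~\ref{prop. continuous-discrete extension} (and points to \cite[Cor.\ 1.30]{MR3971577} for the same observation). Nothing further is needed.
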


\section{Randomization}\label{Section Randomization}

\subsection{Sub-gaussian random variables}
We recall that a (complex) scalar random variable $X$ is called \emph{sub-gaussian} if it has finite sub-gaussian norm,
\begin{align*}
\|X\|_{\psi_2}=\inf\{t>0:\,\mathbf{E}\exp(|X|^2/t^2)\leq 2\}<\infty.
\end{align*}
We will need the following elementary properties of sub-gaussian (e.g.\ Gaussian or symmetric Bernoulli) random variables. (see e.g.\ \cite[Proposition 2.6.1 and Exercise 2.5.10]{MR3837109}).

\begin{proposition}\label{prop. properties of subgaussian rv}
Assume that $(X_j)_{j=1}^N$, $N\geq 2$, is a finite collection of i.i.d.\ mean-zero sub-gaussian random variables. 
\begin{itemize}
\item[i)] Then $\sum_{j=1}^N X_j$ is also sub-gaussian, and
\begin{align*}
\|\sum_{j=1}^N X_j\|_{\psi_2}^2\lesssim \sum_{j=1}^N\|X_j\|_{\psi_2}^2.
\end{align*} 
\item[ii)] We have
\begin{align*}
\mathbf{E}\max_{j\leq N}|X_j|\lesssim \sqrt{\log N}\max_{j\leq N}\|X_j\|_{\psi_2}.
\end{align*}
\end{itemize}
\end{proposition}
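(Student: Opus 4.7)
The plan is to reduce both parts to standard equivalent characterizations of the sub-gaussian norm, and then apply routine calculations with the moment generating function and tail bounds.

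First I would recall (or cite from Vershynin) the equivalent characterizations: a mean-zero random variable $X$ satisfies $\|X\|_{\psi_2}\leq K$ (up to absolute constants) iff $\mathbf{P}(|X|>t)\leq 2\exp(-ct^2/K^2)$ for all $t\geq 0$, iff $\mathbf{E}\exp(\lambda X)\leq \exp(C\lambda^2 K^2)$ for all $\lambda\in\R$. These equivalences are purely deterministic and extract all the real content; once they are in hand, (i) and (ii) are one-step arguments.

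For part (i), I would set $K_j=\|X_j\|_{\psi_2}$ and use independence and the MGF characterization: for every real $\lambda$,
\begin{equation*}
\mathbf{E}\exp\Bigl(\lambda \sum_{j=1}^N X_j\Bigr)=\prod_{j=1}^N \mathbf{E}\exp(\lambda X_j)\leq \prod_{j=1}^N\exp(C\lambda^2 K_j^2)=\exp\Bigl(C\lambda^2\sum_{j=1}^N K_j^2\Bigr).
\end{equation*}
Applying the reverse direction of the MGF characterization then yields $\|\sum X_j\|_{\psi_2}^2\lesssim \sum K_j^2$. (If the $X_j$ are complex-valued, one splits into real and imaginary parts and uses the triangle inequality for $\|\cdot\|_{\psi_2}$.)

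For part (ii), set $K=\max_{j\leq N}\|X_j\|_{\psi_2}$. From the tail bound and a union bound,
\begin{equation*}
\mathbf{P}\bigl(\max_{j\leq N}|X_j|>t\bigr)\leq 2N\exp(-ct^2/K^2).
\end{equation*}
Then I would use $\mathbf{E}\max_j|X_j|=\int_0^\infty \mathbf{P}(\max_j|X_j|>t)\,\rd t$, split the integral at $t_0 = K\sqrt{(\log 2N)/c}$, bound the first piece trivially by $t_0$, and compute the Gaussian tail integral $\int_{t_0}^\infty 2N\exp(-ct^2/K^2)\,\rd t\lesssim K/\sqrt{\log N}$, giving the claimed $\sqrt{\log N}\,K$ bound (absorbing the harmless $\sqrt{\log 2}$ into the implicit constant, using $N\geq 2$).

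There is no real obstacle here; the only mildly delicate point is the implicit constants in the equivalence between the $\psi_2$-Orlicz norm and the MGF/tail characterizations, which cost absolute factors and hence are invisible in the $\lesssim$ notation. Since the statement is standard, I would simply cite \cite[Proposition 2.6.1 and Exercise 2.5.10]{MR3837109} rather than reproduce the argument in detail.
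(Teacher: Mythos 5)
Your proof is correct and is essentially the same as the paper's, which simply applies \cite[Prop.\ 2.6.1, Ex.\ 2.5.10]{MR3837109} to the real and imaginary parts of the $X_j$; you reproduce the standard MGF/tail-bound arguments that underlie those references (including the real/imaginary split for complex random variables) and then cite them as well. No differences in approach worth noting.
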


\begin{proof}
The claim follows by applying \cite[Prop. 2.6.1 and Ex. 2.5.10]{MR3837109} to $\re X_j$ and $\im X_j$ separately.
\end{proof}

\subsection{Tail bounds}

We now consider tail bounds for
\emph{vector-valued} Gaussian or Bernoulli random variables $X$. We have $(\mathbf{E}\|X\|^p)^{1/p}\asymp(\mathbf{E}\|X\|^q)^{1/q}$ for all $p,q>0$
  (cf.~\cite[Corollary 3.2 and Theorem 4.7]{MR1102015}),
  which, combined with \cite[(3.5), (4.12)]{MR1102015} implies
  \begin{align*}
    \mathbf{P}(\|X\| > t)
    \leq \exp\left(-\frac{ct^2}{(\mathbf{E}\|X\|)^2}\right)
  \end{align*}
for some $c>0$. Thus the following lemma is obvious.

\begin{lemma}\label{lemma tail bound}
If $\mathbf{E}\|X\|\leq C$, then
  \begin{align*}
    \mathbf{P}(\|X\| > MC)
    \leq \exp(-cM^2)
  \end{align*}
 for any $M>0$. 
\end{lemma}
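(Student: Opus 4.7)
The plan is simply to specialize the tail bound displayed in the paragraph immediately preceding the lemma statement. That bound, a consequence of Kahane's sub-gaussian concentration for vector-valued Gaussian or Rademacher sums (cf.\ \cite{MR1102015}), asserts
\[
\mathbf{P}(\|X\|>t)\leq \exp\!\left(-\frac{c t^2}{(\mathbf{E}\|X\|)^2}\right)
\]
for all $t>0$. I would substitute $t=MC$ to get an exponent $-c M^2 C^2/(\mathbf{E}\|X\|)^2$, then invoke the hypothesis $\mathbf{E}\|X\|\leq C$, which ensures $C^2/(\mathbf{E}\|X\|)^2\geq 1$ and therefore bounds the exponent above by $-cM^2$. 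This is exactly the claim; the constant $c$ may be kept the same as the one in the quoted inequality, since we are only improving the exponent.

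There is essentially no obstacle here: the lemma is a convenient repackaging of the concentration statement so that later applications need only supply a deterministic upper bound $C$ on the first moment $\mathbf{E}\|X\|$ (typically produced via a Khintchine-type inequality or a decoupling argument before invoking the lemma). Indeed the authors preface the statement with the phrase ``Thus the following lemma is obvious.'' The only nontrivial input is hidden in the preceding display, namely the moment equivalence $(\mathbf{E}\|X\|^p)^{1/p}\asymp(\mathbf{E}\|X\|^q)^{1/q}$ for Gaussian or Rademacher series valued in a Banach space, together with the resulting sub-gaussian concentration; but this is cited rather than proved, so I would not attempt to reprove it as part of the lemma. In short, the only step is a one-line substitution, and the ``hard part'' is entirely outsourced to the reference.
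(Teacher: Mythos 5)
Your proposal is exactly the paper's (implicit) argument: the lemma is stated immediately after the quoted concentration inequality and declared ``obvious,'' and the intended proof is precisely the one-line substitution $t=MC$ followed by the hypothesis $\mathbf{E}\|X\|\leq C$ to bound the exponent. Nothing is missing, and you correctly identify that the real content is in the cited moment-equivalence and concentration results from Ledoux--Talagrand rather than in the lemma itself.
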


\section{Entropy bound}\label{Section Entropy bound}

Consider a linear operator $S:\mathcal{H}\to \ell^{\infty}_m$, where $\mathcal{H}$ is a finite-dimensional Hilbert space and $\ell^{\infty}_m=\ell^{\infty}(\{1,\ldots,m\})$. For $\epsilon>0$ let $\mathcal{N}(\epsilon)$ be the minimal number of balls in $\ell^{\infty}_m$ of radius $\epsilon$ needed to cover the set $\{Sx:\,x\in \mathcal{H},\,\|x\|_{\mathcal{H}}\leq 1\}$. Here we use the convention that the centers of the balls are contained in the set they cover (i.e.\ $\mathcal{N}(t)$ is the \emph{covering number} as opposed to the \emph{exterior covering number}, see e.g.\ \cite[Sect. 4.2]{MR3837109}). Using an entropy bound known as the ``dual Sudakov inequality", which is attributed to Pajor and Tomczak-Jaegermann \cite{MR845980}, Bourgain \cite[(4.2)]{MR1877824} shows that
\begin{align}\label{dual Sudakov}
\log\mathcal{N}(\epsilon)\lesssim (\log m)\epsilon^{-2}\|S\|_{\mathcal{H}\to \ell^{\infty}_m}^2.
\end{align}
The quantity $\log\mathcal{N}(\epsilon)$ is called the \emph{entropy number} of the image of the unit ball in $\mathcal{H}$ under the map $S$. The crucial observation is that \eqref{dual Sudakov} is independent of $\dim\mathcal{H}$.
We apply this bound to the operator featuring in \eqref{def. discres}, i.e.
\begin{align}\label{def. S}
S:\ell^2_{\rm av}(\Lambda^*_R)\to \ell^{\infty}(\Lambda_1\cap B_R),\quad \{a_\nu\}\mapsto
\{\sum_{\nu\in \Lambda^*_R}a_{\nu}\e(\nu\cdot x)\}_x
\end{align} 
In this case, we have $\mathcal{H}=\ell^2_{\rm av}(\Lambda^*_R)$ and $\ell^{\infty}_m=\ell^{\infty}(\Lambda_1\cap B_R)$. In particular, we have $m\asymp R^d$. Here and in the following we always assume $R\geq 2$. Proposition \ref{prop. Stein-Tomas Discres} gives
\begin{align}\label{boundedness of S Stein Tomas}
\|S\|_{\ell^2_{\rm av}(\Lambda^*_R)\to \ell^{p'}(\Lambda_1\cap B_R)}\lesssim 1\quad \mbox{for}\quad p'\geq 2(d+1)/(d-1).
\end{align}
In particular, we have the trivial bound ($p'=\infty$)
\begin{align}\label{boundedness of S trivial}
\|S\|_{\ell^2_{\rm av}(\Lambda^*_R)\to \ell^{\infty}(\Lambda_1\cap B_R)}\lesssim 1.
\end{align}
Combining the latter with \eqref{dual Sudakov} yields the following entropy bound.

\begin{proposition}\label{prop. entropy bound}
Let $S$ be given by \eqref{def. S}. The entropy number satisfies the bound
\begin{align*}
\log\mathcal{N}(\epsilon)\lesssim (\log R)\epsilon^{-2}.
\end{align*}
\end{proposition}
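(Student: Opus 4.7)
The proposition is essentially a direct combination of the tools already assembled in this section, so my plan is to simply plug the data for the operator $S$ into the dual Sudakov estimate \eqref{dual Sudakov}. The point is that the entropy bound \eqref{dual Sudakov} depends only on three quantities: the dimension $m$ of the target space $\ell^\infty_m$, the scale $\epsilon$, and the operator norm $\|S\|_{\mathcal{H}\to\ell^\infty_m}$; it is crucially \emph{independent} of $\dim\mathcal{H}$, which in our case is $|\Lambda^*_R|\asymp R^{d-1}$ and would otherwise enter through a naive volumetric covering.

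Concretely, I would proceed in three short steps. First, identify $\mathcal{H}=\ell^2_{\rm av}(\Lambda^*_R)$ and $\ell^\infty_m=\ell^\infty(\Lambda_1\cap B_R)$, and observe that since $\Lambda_1$ is a $1$-separated set and $B_R$ is a ball of radius $R$, we have $m=|\Lambda_1\cap B_R|\lesssim R^d$, hence $\log m\lesssim d\log R\lesssim \log R$ (the dimension $d$ is fixed throughout). Second, invoke the trivial bound \eqref{boundedness of S trivial}, namely $\|S\|_{\ell^2_{\rm av}(\Lambda^*_R)\to\ell^\infty(\Lambda_1\cap B_R)}\lesssim 1$, which comes from Proposition \ref{prop. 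Stein-Tomas Discres} at the endpoint $p'=\infty$ (or, in fact, from the trivial $\ell^1\to\ell^\infty$ estimate followed by Cauchy--Schwarz and the averaging in the $\ell^2_{\rm av}$-norm). Third, substitute these two estimates into \eqref{dual Sudakov} to conclude
\[ \log \mathcal{N}(\epsilon)\lesssim (\log m)\,\epsilon^{-2}\,\|S\|_{\mathcal{H}\to\ell^\infty_m}^2\lesssim (\log R)\,\epsilon^{-2}. \]

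There is no genuine obstacle: the real work has already been done in stating \eqref{dual Sudakov} (the Pajor--Tomczak-Jaegermann inequality) and in establishing the uniform operator bound via Stein--Tomas. The only points requiring a moment of care are (a) verifying that the $m$ appearing in \eqref{dual Sudakov} can indeed be taken as the cardinality of $\Lambda_1\cap B_R$ rather than some larger ambient dimension, which is immediate from the definition of $S$ as mapping into $\ell^\infty$ indexed by this finite set, and (b) noting that we use the $\ell^\infty$ (rather than $\ell^{p'}$) norm on the target so that \eqref{dual Sudakov} applies directly; the stronger Stein--Tomas bound is not needed here but will be exploited in the subsequent sections when the entropy estimate is combined with interpolation against \eqref{boundedness of S Stein Tomas}.
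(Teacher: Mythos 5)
Your proof is correct and is essentially identical to the paper's: it plugs the trivial bound \eqref{boundedness of S trivial} (with target dimension $m\asymp R^d$, hence $\log m\lesssim\log R$) directly into the dual Sudakov inequality \eqref{dual Sudakov}, which is exactly what the paper does in the sentence immediately preceding the proposition. Your side remarks on why $m$ can be taken as $|\Lambda_1\cap B_R|$ and on the elementary derivation of \eqref{boundedness of S trivial} are accurate but not needed.
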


\begin{corollary}\label{cor. representaion}
Let $p'\geq 2(d+1)/(d-1)$. For every $k\in \Z_+$, 
there exist sets $\mathcal{F}_k\subset \ell^{\infty}(\Lambda_1\cap B_R)$ with the following properties.
\begin{enumerate}
\item[\rm(a)] $\log|\mathcal{F}_k|\lesssim  \log(R)4^k$ (here $|\cdot|$ denotes counting measure).
\item[\rm(b)] For $\xi\in \mathcal{F}_k$,
\begin{align}
\|\xi\|_{\ell^{\infty}(\Lambda_1)}\lesssim 2^{-k},\quad \|\xi\|_{\ell^{p'}(\Lambda_1)}\lesssim 1.
\end{align} 
\item[\rm(c)] For each $a\in \ell^2_{\rm av}(\Lambda^*_R)$ with $\|a\|_{\ell^2_{\rm av}(\Lambda^*_R)}\leq 1$ there is a representation
\begin{align*}
Sa=\sum_{k\in \Z_+}\xi^{(k)}\quad \mbox{for some}\quad \xi^{(k)}\in\mathcal{F}_k.
\end{align*} 
\end{enumerate}
\end{corollary}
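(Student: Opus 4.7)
The plan is to perform a dyadic chaining argument based on Proposition~\ref{prop. entropy bound}, using the Stein--Tomas bound \eqref{boundedness of S Stein Tomas} to transfer $\ell^{p'}$ control from the original unit ball to each piece of the decomposition. Write $B_{\mathcal{H}}$ for the closed unit ball of $\ell^2_{\rm av}(\Lambda^*_R)$. For each $k\in\Z_+$ I would fix, once and for all, a minimal covering of $S(B_{\mathcal{H}})$ by balls in $\ell^{\infty}(\Lambda_1\cap B_R)$ of radius $C\,2^{-k}$, with centers $\mathcal{G}_k\subset S(B_{\mathcal{H}})$ (the covering-number convention recalled in Section~\ref{Section Entropy bound}). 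Proposition~\ref{prop. entropy bound} gives $\log|\mathcal{G}_k|\lesssim (\log R)\,4^k$, while \eqref{boundedness of S Stein Tomas} yields $\|c\|_{\ell^{p'}(\Lambda_1)}\lesssim 1$ for every $c\in\mathcal{G}_k$.

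I would then define $\mathcal{F}_0=\mathcal{G}_0$ and, for $k\ge 1$, let $\mathcal{F}_k$ be the Minkowski difference $\mathcal{G}_k-\mathcal{G}_{k-1}$. Since $|\mathcal{F}_k|\le |\mathcal{G}_k|\,|\mathcal{G}_{k-1}|$, assertion (a) is immediate. Any $\xi\in\mathcal{F}_k$ (for $k\ge 1$) is a difference of two points of $S(B_{\mathcal{H}})$, so the triangle inequality in $\ell^{p'}$ gives $\|\xi\|_{\ell^{p'}}\lesssim 1$; the analogous statement for $\mathcal{F}_0=\mathcal{G}_0$ is direct. The $\ell^{\infty}$ bound in (b) will come out of the choice of the $\xi^{(k)}$ in the chaining step below, but it is uniform over $\mathcal{F}_k$: any difference $c-c'$ with $c\in\mathcal{G}_k$, $c'\in\mathcal{G}_{k-1}$ that actually arises from approximants of a common $Sa$ is of size $\lesssim 2^{-k}$ in $\ell^{\infty}$, and it costs nothing to redefine $\mathcal{F}_k$ as the subset of such admissible differences.

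For (c), given $a\in B_{\mathcal{H}}$, choose $c_k(a)\in\mathcal{G}_k$ with $\|Sa-c_k(a)\|_{\ell^{\infty}}\le C\,2^{-k}$ (possible by construction of the covering), and set $\xi^{(0)}=c_0(a)\in\mathcal{F}_0$ and $\xi^{(k)}=c_k(a)-c_{k-1}(a)\in\mathcal{F}_k$ for $k\ge 1$. The $\ell^{\infty}$ bound in (b) then follows from
\[
\|\xi^{(k)}\|_{\ell^{\infty}}\le \|Sa-c_k(a)\|_{\ell^{\infty}}+\|Sa-c_{k-1}(a)\|_{\ell^{\infty}}\lesssim 2^{-k},
\]
and the partial sums telescope to $\sum_{j=0}^{K}\xi^{(j)}=c_K(a)\to Sa$ in $\ell^{\infty}(\Lambda_1\cap B_R)$ as $K\to\infty$, giving the representation in (c).

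There is no serious obstacle; the argument is the standard dyadic telescoping used in entropy-based proofs. The one point that must be handled correctly is the insistence on \emph{interior} covering numbers, i.e.\ that the net centers $\mathcal{G}_k$ lie in $S(B_{\mathcal{H}})$ rather than merely in the ambient space. It is precisely this inclusion that lets the Stein--Tomas bound \eqref{boundedness of S Stein Tomas} be inherited by every element of $\mathcal{F}_k$ and thus by every increment $\xi^{(k)}$. Everything else---the $\ell^{\infty}$ convergence of the telescoping series, the cardinality counts, and the two triangle inequalities---is routine.
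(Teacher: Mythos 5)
Your proposal is correct and follows essentially the same chaining argument as the paper: nested dyadic $\ell^\infty$-nets with centers taken inside $SB_{\mathcal{H}}$ (so that the Stein--Tomas bound \eqref{boundedness of S Stein Tomas} yields the $\ell^{p'}$ control), a restriction of the Minkowski differences to the admissible ones for the $\ell^\infty$ bound, and a telescoping representation of $Sa$. The only cosmetic difference is that the paper normalizes $\mathcal{N}(1)=1$ by rescaling $SB_1$, whereas you carry a constant $C$ through the covering radius; this does not affect the argument.
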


\begin{proof}
We follow Bourgain \cite[page 75-76]{MR2083389}, but provide more details (note also that there is a misprint in (3.13) there; it should be $4^r$, not $4^{-r}$). This is a standard chaining argument.

We start by noting that, in view of \eqref{dual Sudakov} and \eqref{boundedness of S trivial}, we have $\mathcal{N}(C)=1$ for $C$ sufficiently large. In the following (and only in this proof) denote the unit ball in $\ell^2_{\rm av}(\Lambda^*_R)$ by $B_1$. Similarly, $B(\xi,\epsilon)$ denotes a ball centered at $\xi$ and with radius $\epsilon$ in $\ell^{\infty}(\Lambda_1\cap B_R)$. We also write $\|\cdot\|_{p}=\|\cdot\|_{\ell^p(\Lambda_1)}$ here. By possibly rescaling $SB_1$ by a constant, we may assume that $C=1$. Thus, we have $\mathcal{N}(1)=1$. We get, by Proposition \ref{prop. entropy bound},
\begin{align}\label{Ntk}
\log\mathcal{N}(2^{-k})\lesssim \log(R)4^k.
\end{align}
Thus, for each $k\geq 0$, there exist subsets $\mathcal{E}_k\subset\ell^{\infty}(\Lambda_1\cap B_R)$ of cardinality $\mathcal{N}(2^{-k})$ satisfying  
\begin{align*}
SB_1\subset \bigcup_{\xi\in\mathcal{E}_k}B(\xi,2^{-k}).
\end{align*}
Applying these nets for each $k$, we can assign to each element $Sa\in SB_1$ a chain $\{\xi_k\}$
 converging to $Sa$, with $\xi_k\in \mathcal{E}_k$ and 
\begin{align}\label{def. Fk}
 \|\xi_k-\xi_{k-1}\|_{\infty}\leq 2^{-k}+2^{1-k}
\end{align} 
for all $k$.
By telescoping, we have 
\begin{align*}
Sa=\xi_0+\lim_{N\to\infty}\sum_{k=1}^N(\xi_k-\xi_{k-1})
\end{align*}
Thus, we may choose $\mathcal{F}_{0}=\mathcal{E}_{0}$ and $\mathcal{F}_k\subset \mathcal{E}_k-\mathcal{E}_{k-1}$, $k>0$, as the collection of all vectors $\xi^{(k)}=\xi_k-\xi_{k-1}$ for which \eqref{def. Fk} holds. Since the difference set $\mathcal{E}_k-\mathcal{E}_{k-1}$ has cardinality $|\mathcal{E}_k||\mathcal{E}_{k-1}|$ the claimed properties hold by construction.
\end{proof}

\section{Local bounds on elementary operators}\label{Section local bounds}

\subsection{Local extension bound}
Let $h,R>0$. Consider $V_{\omega}$ of the form \eqref{randomization of deterministic V}, where $V$ is a given deterministic potential supported in $B_R$.
Also fix $p'\geq 2(d+1)/(d-1)$ and define $q$ by $1/q=1/p-1/p'$. Note that this convention differs from that in the main theorems by a change of variables $q\to 2q$.

\begin{lemma}\label{lemma local extension bound}
Under the above assumptions, 
we have
\begin{align*}
\mathbf{E}\|\mathcal{E}^*V_{\omega} \mathcal{E}\|_{L^2(M,\rd\sigma)\to L^2(M,\rd\sigma)}\lesssim \langle h\rangle^{d/2}(\log \langle  R\rangle)^{1/2}(\log \langle h\rangle + \log \langle R\rangle)^{2}\|V\|_{L^{2q}(\R^d)}.
\end{align*}
\end{lemma}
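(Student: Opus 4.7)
The plan is to combine the chaining decomposition from Corollary~\ref{cor. representaion} with the sub-gaussian concentration of Section~\ref{Section Randomization}, following the strategy Bourgain developed in \cite{MR1877824, MR2083389}. First, I rewrite the operator norm as a supremum of bilinear forms $\int V_\omega\,\mathcal{E} g\,\overline{\mathcal{E} h'}\,\rd x$ over unit vectors $g,h'\in L^2(M,\rd\sigma)$. Since $\mathcal{E} g,\mathcal{E} h'$ are Fourier-supported in $B(0,1)$, Lemmas~\ref{lemma locally constant}--\ref{lemma Ph} reduce the problem to the discrete setting on the lattice $\Lambda_1\cap B_R$. Expressing $g,h'$ as cap-averages on a $1/R$-net $\Lambda_R^*\subset M$ brings in the discrete extension operator $S$ of \eqref{def. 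S}, and the bilinear form becomes
\[
\sum_{j\in h\Z^d}\omega_j\,\beta_j(a,b),\qquad \beta_j(a,b)\approx \sum_{x\in\Lambda_1\cap Q_j^h} V_x\,(Sa)(x)\,\overline{(Sb)(x)},
\]
where $V_x$ is a unit-cube average of $V$ (this description is for $h\ge 1$; when $h<1$ the randomization supplies extra Khintchine-type cancellation inside each unit cube, which keeps $\langle h\rangle^{d/2}$ bounded rather than shrinking).

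Next I apply Corollary~\ref{cor. representaion} to both $Sa$ and $Sb$, writing $Sa=\sum_k\xi^{(k)}$ and $Sb=\sum_{k'}\zeta^{(k')}$ with $\xi^{(k)}\in\mathcal{F}_k$, $\zeta^{(k')}\in\mathcal{F}_{k'}$ satisfying $\|\cdot\|_\infty\lesssim 2^{-k}$ and $\|\cdot\|_{\ell^{p'}}\lesssim 1$. By bilinearity the form splits into $\sum_{k,k'} B_\omega(\xi^{(k)},\zeta^{(k')})$, and Proposition~\ref{prop. properties of subgaussian rv}(i) gives $\|B_\omega(\xi,\zeta)\|_{\psi_2}^2 \lesssim \sum_j|\beta_j(\xi,\zeta)|^2$ for each fixed pair. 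The key deterministic estimate is
\[
\sum_j|\beta_j(\xi,\zeta)|^2 \lesssim h^d\,2^{-(k+k')}\,\|V\|_{L^{2q}(\R^d)}^2,
\]
which I derive by a Hölder chain: on each cube $Q_j^h$, combine the geometric-mean bound $\|\xi\zeta\|_{\ell^{(2q)'}}^2 \le \|\xi\|_\infty\|\zeta\|_\infty\|\xi\|_{\ell^{(2q)'}}\|\zeta\|_{\ell^{(2q)'}}$ with the embedding $\|\cdot\|_{\ell^{(2q)'}(Q_j^h)}\le h^{d/2}\|\cdot\|_{\ell^{p'}(Q_j^h)}$ (valid since $1/(2q)'-1/p'=1/2$ for $2q=d+1$ and $p'=2(d+1)/(d-1)$), and then apply a trilinear Hölder over $j$ with exponents $(q,p',p')$, which satisfy $1/q+2/p'=1$, using $\sum_j\|V\|_{L^{2q}(Q_j^h)}^{2q}=\|V\|_{L^{2q}}^{2q}$ and $\|\xi\|_{\ell^{p'}},\|\zeta\|_{\ell^{p'}}\lesssim 1$.

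Applying Proposition~\ref{prop. properties of subgaussian rv}(ii) over $\mathcal{F}_k\times\mathcal{F}_{k'}$ and invoking the entropy bound $\log|\mathcal{F}_k|\lesssim 4^k\log\langle R\rangle$ from Corollary~\ref{cor. representaion}(a) then yields a per-level contribution of order $h^{d/2}\sqrt{\log\langle R\rangle}\,2^{|k-k'|/2}\|V\|_{L^{2q}}$. This is where the main obstacle lies: the double sum over $(k,k')\in\Z_+^2$ diverges. The resolution, analogous to an epsilon-removal lemma, is to truncate both chains at a level $K$ tuned to $\log\langle h\rangle+\log\langle R\rangle$: the head sum over $k,k'\le K$ is $O(K\cdot 2^{K/2})$, while the tail $\sum_{k>K}\xi^{(k)}$, which has $\ell^\infty$-norm $\lesssim 2^{-K}$, is controlled deterministically via the Stein--Tomas bound (Proposition~\ref{prop. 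Stein-Tomas Discres}) applied to the full residual. Optimizing $K$ produces the advertised factor $(\log\langle R\rangle)^{1/2}(\log\langle h\rangle+\log\langle R\rangle)^2$. The delicate accounting of these polylogarithmic losses, together with the handling of the sub-unit scale $h<1$ by the additional layer of Khintchine cancellation, is where the bulk of the technical work will lie.
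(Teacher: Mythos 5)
Your reduction to the discrete bilinear form, your use of the chaining decomposition from Corollary~\ref{cor. representaion}, and the Hölder chain giving the per-pair sub-gaussian bound $\|B_\omega(\xi,\zeta)\|_{\psi_2}\lesssim h^{d/2}2^{-(k+k')/2}\|V\|_{L^{2q}}$ are all correct calculations, and the framework matches the paper's. The gap is in how you then sum over $(k,k')$, and it is genuine.

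You chose the symmetric (geometric-mean) Hölder split, which yields $2^{-(k+k')/2}$. After multiplying by $\sqrt{\log N}\lesssim\sqrt{\log R}\,\max(2^k,2^{k'})$ you get the growth factor $2^{|k-k'|/2}$ and the double sum diverges, as you note. The paper instead uses the \emph{asymmetric} split $\|\xi\zeta\|_{\ell^{p'}}\le\|\xi\|_\infty\|\zeta\|_{\ell^{p'}}$, which produces $\min(2^{-k},2^{-k'})$ rather than $2^{-(k+k')/2}$. This is a better choice for the probabilistic estimate: after multiplication by $\max(2^k,2^{k'})$ from the entropy count, the product is exactly $1$, so the first bound is \emph{uniform} in $(k,k')$. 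The remaining divergence of $\sum_{k,k'}$ is then cured by a \emph{second}, fully deterministic bound (no expectation, no entropy count at all; see the second estimate in Lemma~\ref{lemma Dudley}), namely $\int\max_{\mathcal{F}_k\times\mathcal{F}_{k'}}|X_{\xi,\xi'}|\lesssim R^{d-d/2q}2^{-k-k'}\|V\|_{L^{2q}}$, which pays a power of $R$ but supplies the missing geometric decay. Taking the pointwise minimum of the two and summing via Lemma~\ref{lemma Geometric series estimate} gives $(\log\langle R\rangle)^{1/2}\langle h\rangle^{d/2}(\log\langle h\rangle+\log\langle R\rangle)^2$.

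Your proposed truncation at $K\sim\log\langle h\rangle+\log\langle R\rangle$ cannot repair the divergence. The head sum $\sum_{k,k'\le K}2^{|k-k'|/2}$ is $\asymp 2^{K/2}$, i.e.\ \emph{exponential} in $K$, so for $K\sim\log\langle R\rangle$ this is already a \emph{power} of $R$, not a polylog. Pushing $K$ down to make the head small (say $K\sim\log\log\langle R\rangle$) only gives $\|\text{tail}\|_\infty\lesssim 2^{-K}\sim(\log\langle R\rangle)^{-1}$, and the best deterministic control of the tail (through Stein--Tomas at the exponent $p'$, combined with Hölder on $V$ supported in $B_R$) costs a power $R^{\mathcal{O}(1)}$ which the factor $2^{-K}$ does not come close to compensating. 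There is no choice of $K$ producing the advertised polylogarithmic loss; you need the second deterministic estimate (with its $2^{-k-k'}$ decay) feeding into the geometric series lemma, not a truncation.

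Two smaller points: the paper first replaces $V$ by $V\ast\varphi$ via \eqref{smoothing identity for V} to justify treating $V$ as locally constant at the unit scale (you implicitly assume this but never justify it), and there is no additional Khintchine-type cancellation at scale $h<1$; the paper simply reduces to $h\ge 2$ because the right-hand side is monotone in $\langle h\rangle$.
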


\begin{proof}
Since the right hand side only gets larger if we replace $R$ and $h$ by $R+2$ and $h+2$, respectively, we may assume $R,h\geq 2$.
We first observe that 
\begin{align}\label{smoothing identity for V}
\mathcal{E}^*V_{\omega} \mathcal{E}=\mathcal{E}^*(V_{\omega}\ast\varphi) \mathcal{E}
\end{align}
for any Schwartz function $\varphi$ satisfying $\hat{\varphi}=1$ on $B(0,2)$. We can thus assume without loss of generality that $V$ is smooth on the unit scale.
Let $g,g'$ be unit vectors in $L^2(M,\rd\sigma)$. Then
\begin{align*}
\langle \mathcal{E}^*V_{\omega} \mathcal{E}g,g'\rangle&=\sum_{j\in h\Z^d}\omega_j\int_{Q_h+j}\overline{V(x)(\mathcal{E}g)(x)}(\mathcal{E}g')(x)\rd x,
\end{align*}
where $Q_h=[0,h)^d$. Let $\Lambda^*_R=\{\eta_{\nu}\}$ be a $1/R$-net in $M$. By working with a partition of unity, we may assume that $g$ is supported on a collection of disjoint balls $B(\eta_{\nu},10/R)$. After a change of variables $g(\eta)=g(\eta_{\nu}+\tau)$, we may write
\begin{align*}
\mathcal{E}g(x)=\sum_{\nu}\int_{M\cap B(0,10/R)} \e(x\cdot(\eta_{\nu}+\tau
))g(\eta_{\nu}+\tau)\rd\tau,
\end{align*} 
where $\rd\tau$ denotes surface measure, and similarly (summing over a possibly different index set)
\begin{align*}
\mathcal{E}g'(x)=\sum_{\nu'}\int_{M\cap B(0,10/R)} \e(x\cdot(\eta_{\nu'}+\tau'
))g'(\eta_{\nu'}+\tau')\rd \tau'.
\end{align*} 
Similarly to the change of variables $\eta=\eta_{\nu}+\tau$ in the domain, we change variables $x=x_i+y$ in the target. Here, $\Lambda_1=\{x_i\}$ is a $1$-net in $\R^d$. Hence, for any integrable function $F:\R^d\to\C$, supported on a disjoint collection of balls $B(x_i,10)$, 
\begin{align*}
\int_{\R^d}F(x)\rd x=\sum_i\int_{B(0,10)}F(x_i+y)\rd y.
\end{align*}
Using a partition of unity we may sparsify the potential, so that the above holds for 
\begin{align*}
F_j(x)=\overline{V(x)(\mathcal{E}g)(x)}(\mathcal{E}g')(x)\mathbf{1}_{Q_h+j}(x).
\end{align*}
Note that in this case the sum is restricted to those $i$ satisfying $x_i\in B(j,10+h)$. For fixed $\tau\in B(0,10/R)$ and $y\in B(0,10)$ we consider the discrete extension operator 
\begin{align*}
S:\ell^2_{\rm av}(\Lambda^*_R)\to \ell^{\infty}(\Lambda_1\cap B_R),\quad \!\!\!\!\!\!\{g(\eta_{\nu}+\tau)\}_{\nu}\mapsto
\{\sum_{\nu}\e((x_i+y)\cdot (\eta_{\nu}+\tau))g(\eta_{\nu}+\tau)\}_i.
\end{align*} 
Note that the points $\mu_{\nu}=\eta_{\nu}+\tau$ and $z_i=x_i+y$ form a $1/R$-separated set in $M$ and a $1$-separated set in $\R^d$, respectively, so that \eqref{boundedness of S Stein Tomas}, \eqref{boundedness of S trivial} hold.
Using Corollary~\ref{cor. representaion}, we can find a representation (note that the vectors $\xi^{(k)}$ depend on $\tau,y$)
\begin{align*}
\sum_{\nu}\e((x_i+y)\cdot (\eta_{\nu}+\tau))g(\eta_{\nu}+\tau)=\sum_{k\in \Z_+}\xi_i^{(k)},\quad \xi^{(k)}\in\mathcal{F}_k,
\end{align*}
with bounds
\begin{align}\label{bounds xi in terms of g}
\|\xi^{(k)}\|_{\infty}\lesssim 2^{-k}\|g(\eta_{\nu}+\tau)\|_{\ell^2_{\nu,{\rm av}}},\quad \|\xi^{(k)}\|_{p'}\lesssim \|g(\eta_{\nu}+\tau)\|_{\ell^2_{\nu,{\rm av}}}
\end{align}
for all $k\in\Z_+$ and $y\in B(0,10)$. Similarly, there is a representation
\begin{align*}
\sum_{\nu'}\e((x_i+y)\cdot (\eta_{\nu'}+\tau'))g'(\eta_{\nu'}+\tau')=\sum_{k'\in \Z_+}\xi_i^{(k')},\quad \xi^{(k')}\in\mathcal{F}_{k'},
\end{align*}
with bounds
\begin{align}\label{bounds xi' in terms of g'}
\|\xi^{(k')}\|_{\infty}\lesssim 2^{-k'}\|g'(\eta_{\nu'}+\tau')\|_{\ell^2_{\nu',{\rm av}}},\quad \|\xi^{(k')}\|_{p'}\lesssim \|g'(\eta_{\nu'}+\tau')\|_{\ell^2_{\nu',{\rm av}}}.
\end{align}
The above observations lead to the estimate
\begin{align*}
|\langle \mathcal{E}^*V_{\omega} \mathcal{E}g,g'\rangle|\leq 
\sum_{k,k'\in \Z_+}\int \max_{(\xi,\xi')\in\mathcal{F}_k\times \mathcal{F}_{k'}}|\sum_{j\in h\Z^d}\sum_i \omega_j\overline{V(x_i+y)\xi_i}\xi_i'|\rd y\rd\tau\rd\tau',
\end{align*}
where the integral is taken over $(y,\tau,\tau')\in B(0,10)\times (M\cap B(0,10/R))^2$ and the sum over $i$ is restricted to $x_i+y\in Q_h+j$ (we recall that $y$ is fixed). By monotonicity of the expectation, 
\begin{align*}
\mathbf{E}|\langle \mathcal{E}^*V_{\omega} \mathcal{E}g,g'\rangle|\leq 
\sum_{k,k'\in \Z_+}\int \mathbf{E}\max_{\mathcal{F}_k\times \mathcal{F}_{k'}}|X_{\xi,\xi'}|\rd y\rd\tau\rd\tau',
\end{align*}
where (suppressing the dependence on $y,\tau,\tau'$)
\begin{align*}
X_{\xi,\xi'}=\sum_{j\in h\Z^d}\omega_j\sum_i \overline{V(x_i+y)\xi_i}\xi_i'.
\end{align*}
The conclusion follows by Lemma \ref{lemma Dudley} and \ref{lemma Geometric series estimate} (details of the calculation are provided in the appendix).
\end{proof}

\begin{lemma}\label{lemma Dudley}
Let $R,h\geq 2$. Then we have the following bounds,
\begin{align*}
&\int \mathbf{E}\max_{\mathcal{F}_k\times \mathcal{F}_{k'}}|X_{\xi,\xi'}|\rd y\rd\tau\rd\tau'
\lesssim (\log R)^{1/2}h^{d/2}
\|V\|_{L^{2q}(\R^d)},\\
&\int \max_{\mathcal{F}_k\times \mathcal{F}_{k'}}|X_{\xi,\xi'}|\rd y\rd\tau\rd\tau'\lesssim R^{d-d/2q} 2^{-k-k'}
\|V\|_{L^{2q}(\R^d)}.
\end{align*} 
\end{lemma}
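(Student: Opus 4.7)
Both bounds rest on decomposing $X_{\xi,\xi'}=\sum_{j\in h\Z^d}\omega_j Y_j(y)$ with block-sum $Y_j(y)=\sum_{i\in B_j}V(x_i+y)\xi_i\xi'_i$, where $B_j=\{i:x_i+y\in Q_h+j\}$ has cardinality $|B_j|\lesssim h^d$ since $\Lambda_1$ is $1$-separated and $Q_h+j$ has sidelength $h$. For the deterministic bound I would apply $|\omega_j|\leq 1$ and the triangle inequality to collapse this into $|X_{\xi,\xi'}|\leq\sum_i|V(x_i+y)||\xi_i||\xi'_i|$, and then use Hölder with exponents $(2q,\infty,\infty)$ together with the support bound $|\Lambda_1\cap B_R|\lesssim R^d$ and $\|\xi\|_\infty\lesssim 2^{-k}$, $\|\xi'\|_\infty\lesssim 2^{-k'}$ from Corollary \ref{cor. representaion} to get $|X_{\xi,\xi'}|\lesssim 2^{-k-k'}R^{d-d/(2q)}\|V(\cdot+y)\|_{\ell^{2q}(\Lambda_1)}$. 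Assuming $V$ smooth on unit scale (permissible by \eqref{smoothing identity for V}), Lemma \ref{lemma Ph} upgrades the right-hand side to $\|V\|_{L^{2q}(\R^d)}$, and integration over the bounded $(y,\tau,\tau')$-domain produces the second claim.

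For the expectation bound everything hinges on two algebraic identities that follow from $p'=p/(p-1)$ combined with $1/q=1/p-1/p'$, namely
\begin{align*}
2q'=p' \qquad\text{and}\qquad \tfrac{1}{(2q)'}-\tfrac{1}{p'}=\tfrac{1}{2}.
\end{align*}
The sub-gaussian norm is controlled by Proposition \ref{prop. properties of subgaussian rv}(i): $\|X_{\xi,\xi'}\|_{\psi_2}^2\lesssim\sum_j|Y_j|^2$. I would bound each block via Hölder,
$|Y_j|\leq\|V(\cdot+y)\|_{\ell^{2q}(B_j)}\|\xi\xi'\|_{\ell^{(2q)'}(B_j)}$,
pull out $\|\xi\|_\infty\lesssim 2^{-k}$, and then invoke the volume-loss embedding
$\|\xi'\|_{\ell^{(2q)'}(B_j)}\leq|B_j|^{1/(2q)'-1/p'}\|\xi'\|_{\ell^{p'}(B_j)}\leq h^{d/2}\|\xi'\|_{\ell^{p'}(B_j)}$,
where the second identity above produces the clean exponent $1/2$. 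After squaring $|Y_j|$ and applying Hölder in $j$ with exponents $(q,q')$ — which raises the $\xi'$-factor precisely to the power $2q'$ — the first identity makes $\sum_j\|\xi'\|_{\ell^{p'}(B_j)}^{p'}$ telescope into $\|\xi'\|_{\ell^{p'}}^{p'}\lesssim 1$. A further use of Lemma \ref{lemma Ph} to pass from discrete to continuous $L^{2q}$ then gives $\sum_j|Y_j|^2\lesssim 2^{-2k}h^d\|V\|_{L^{2q}}^2$, hence $\|X_{\xi,\xi'}\|_{\psi_2}\lesssim 2^{-k}h^{d/2}\|V\|_{L^{2q}}$. By the symmetric argument (swapping the roles of $\xi$ and $\xi'$), one also has $\|X_{\xi,\xi'}\|_{\psi_2}\lesssim 2^{-k'}h^{d/2}\|V\|_{L^{2q}}$, and hence $\|X_{\xi,\xi'}\|_{\psi_2}\lesssim 2^{-\max(k,k')}h^{d/2}\|V\|_{L^{2q}}$.

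The endgame combines this with Proposition \ref{prop. properties of subgaussian rv}(ii) and the cardinality bound $\log|\mathcal{F}_k\times\mathcal{F}_{k'}|\lesssim(\log R)(4^k+4^{k'})$ of Corollary \ref{cor. representaion}(a):
\begin{align*}
\mathbf{E}\max_{\mathcal{F}_k\times\mathcal{F}_{k'}}|X_{\xi,\xi'}|\lesssim\sqrt{(\log R)(4^k+4^{k'})}\cdot 2^{-\max(k,k')}h^{d/2}\|V\|_{L^{2q}}\lesssim(\log R)^{1/2}h^{d/2}\|V\|_{L^{2q}},
\end{align*}
using $\sqrt{4^k+4^{k'}}\lesssim 2^{\max(k,k')}$ at the final step; integration over the bounded $(y,\tau,\tau')$-domain finishes the proof. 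The hard part is precisely this cancellation — the entropy growth $\sqrt{4^k+4^{k'}}$ coming from the cardinality of Bourgain's chaining nets must be absorbed by their $\ell^\infty$-decay $2^{-\max(k,k')}$, and the bookkeeping balances only because $q$ sits on the Stein–Tomas line where both algebraic identities above hold exactly.
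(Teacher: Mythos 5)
Your proof is correct and takes essentially the same route as the paper: decompose $X_{\xi,\xi'}$ into block sums $\sum_j\omega_jY_j$, bound the $\psi_2$-norm via Hölder in $i$ and then in $j$ using the Stein--Tomas exponent identity $\tfrac{1}{2}=\tfrac{1}{2q}+\tfrac{1}{p'}$, and cancel the entropy factor $\sqrt{\log|\mathcal{F}_k\times\mathcal{F}_{k'}|}\asymp\sqrt{\log R}\,2^{\max(k,k')}$ against $\min(2^{-k},2^{-k'})$. The only (cosmetic) difference is that you load the entire $h^{d/2}$ volume factor onto the $\xi'$ side via the embedding $\|\xi'\|_{\ell^{(2q)'}(B_j)}\leq|B_j|^{1/(2q)'-1/p'}\|\xi'\|_{\ell^{p'}(B_j)}$, whereas the paper distributes it as $h^{d/2q}$ on the $V$-term and $h^{d/p'}$ on the $\xi,\xi'$-terms; the identities $2q'=p'$ and $1/(2q)+1/p'=1/2$ make the two bookkeepings equivalent.
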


\begin{proof}
Note first that the index set of $X_{\xi,\xi'}$ is finite and has cardinality $N$, satisfying
\begin{align}\label{N=card Ftimes F'}
\log N=\log |\mathcal{F}_k\times \mathcal{F}_{k'}|\lesssim \log R\max(4^k,4^{k'}),
\end{align}
by Corollary \ref{cor. representaion} (a). 
Proposition \ref{prop. properties of subgaussian rv} implies that $X_{\xi,\xi'}$ are (scalar) subgaussian random variables, and  
\begin{align*}
\mathbf{E}\max_{\mathcal{F}_k\times \mathcal{F}_{k'}}|X_{\xi,\xi'}|
\lesssim \sqrt{\log N} (\sum_{j\in h\Z^d}|\sum_i \overline{V(x_i+y)\xi_i}\xi_i'|^2)^{1/2},
\end{align*}
where we recall that we are assuming $\|\omega_j\|_{\psi_2}\lesssim 1$. Using Hölder's inequality twice, it follows that
\begin{align}\label{Emax lemma Dudley}
\mathbf{E}\max_{\mathcal{F}_k\times \mathcal{F}_{k'}}|X_{\xi,\xi'}|
&\lesssim \sqrt{\log N}\left\|\|V(x_i+y)\|_{\ell^q_i}\|\xi_i\|_{\ell^{p'}_i}\|\xi_i'\|_{\ell^{p'}_i}\right\|_{\ell^{2}_j}\\
&\lesssim \sqrt{\log N}\left\|\|V(x_i+y)\|_{\ell^q_i}\right\|_{\ell^{2q}_j}
\left\|\|\xi_i\|_{\ell^{p'}_i}
\|\xi_i'\|_{\ell^{p'}_i}\right\|_{\ell^{p'}_j},
\end{align}
where we recall that $i$ is restricted to $x_i+y\in Q_h+j$ and $y$ is fixed. In particular, we have
\begin{align}\label{cardinality of j}
|\{j\in h\Z^d:\, x_i+y\in Q_h+j\}|&=1 \quad\mbox{for each } i
\end{align}
and 
\begin{align}\label{cardinality of i}
|\{i:\, x_i+y\in Q_h+j\}|&\leq h^d\quad\mbox{for each } j\in h\Z^d.
\end{align}
We will show that
\begin{align}\label{claim lemma Dudley}
\left\|\|\xi_i\|_{\ell^{p'}_i}
\|\xi_i'\|_{\ell^{p'}_i}\right\|_{\ell^{p'}_j}\lesssim 
h^{d/p'}\min(2^{-k},2^{-k'})
\|g(\eta_{\nu}+\tau)\|_{\ell^2_{\nu,{\rm av}}}\|g'(\eta_{\nu'}+\tau')\|_{\ell^2_{\nu',{\rm av}}}
\end{align}
By symmetry in $\xi,\xi'$, it suffices to prove this in the case $k\geq k'$. Using Hölder once more, we have
\begin{align*}
\left\|\|\xi_i\|_{\ell^{p'}_i}
\|\xi_i'\|_{\ell^{p'}_i}\right\|_{\ell^{p'}_j}\leq 
\left\|\|\xi_i\|_{\ell^{p'}_i}\right\|_{\ell^{\infty}_j}\left\|\|\xi_i'\|_{\ell^{p'}_i}\right\|_{\ell^{p'}_j}.
\end{align*}
By Fubini's theorem and \eqref{cardinality of j}, 
\begin{align*}
\left\|\|\xi_i'\|_{\ell^{p'}_i}\right\|_{\ell^{p'}_j}&=(\sum_i\sum_{j\in h\Z^d}|\xi_i'|^{p'})^{1/p'}
=(\sum_i |\xi_i'|^{p'})^{1/p'}
=\|\xi'\|_{p'}.
\end{align*}
Similarly, by \eqref{cardinality of i} we have 
\begin{align*}
\left\|\|\xi_i\|_{\ell^{p'}_i}\right\|_{\ell^{\infty}_j}\leq h^{d/p'} \|\xi\|_{\infty}.
\end{align*}
Combining these estimates with \eqref{bounds xi in terms of g}, \eqref{bounds xi' in terms of g'} yields \eqref{claim lemma Dudley}. Next, we have (again by Hölder, Fubini and \eqref{cardinality of i})
\begin{align}\label{V mixed norm Hölder, Fubini}
\left\|\|V(x_i+y)\|_{\ell^q_i}\right\|_{\ell^{2q}_j}&\leq 
 h^{d/2q}\left\|\|V(x_i+y)\|_{\ell^{2q}_i}\right\|_{\ell^{2q}_j}\\
&=
 h^{d/2q}\left\|\|V(x_i+y)\|_{\ell^{2q}_j}\right\|_{\ell^{2q}_i}\\
&= h^{d/2q}\|V(x_i+y)\|_{\ell^{2q}_i}.
\end{align}
Integrating \eqref{Emax lemma Dudley} over $y,\tau,\tau'$ and using \eqref{claim lemma Dudley}, \eqref{V mixed norm Hölder, Fubini}, we obtain 
\begin{align*}
\int \mathbf{E}\max_{\mathcal{F}_k\times \mathcal{F}_{k'}}|X_{\xi,\xi'}|\rd y\rd\tau\rd\tau'\lesssim \sqrt{\log N} \min(2^{-k},2^{-k'}) h^{d/2}
\|V\|_{L^{2q}(\R^d)},
\end{align*} 
where we used that 
$\frac{1}{2}=\frac{1}{2q}+\frac{1}{p'}$, 
$\|V(x_i+y)\|_{L^{2q}_y\ell^{2q}_i}\lesssim \|V\|_{L^{2q}(\R^d)}$ and
\begin{align*}
R^{-(d-1)}\|g(\eta_{\nu}+\tau)\|_{L^2_{\tau}\ell^2_{\nu,{\rm av}}}\|g'(\eta_{\nu'}+\tau')\|_{L^2_{\tau'}\ell^2_{\nu',{\rm av}}}\lesssim \|g\|_{L^2(M,\rd\sigma)}\|g'\|_{L^2(M,\rd\sigma)}=1.
\end{align*}
Combining this with \eqref{N=card Ftimes F'} yields the first bound of the lemma.
The second bound follows from the estimate
\begin{align*}
|X_{\xi,\xi'}|&\leq\sum_{j\in h\Z^d}|\sum_i \overline{V(x_i+y)\xi_i}\xi_i'|\leq \sum_{j\in h\Z^d}\|V(x_i+y)\|_{\ell^1_i}\|\xi_i\|_{\ell^{\infty}_i}\|\xi_i'\|_{\ell^{\infty}_i}\\
&\leq \|V(x_i+y)\|_{\ell^1_j\ell^1_i}\|\xi\|_{\infty}\|\xi'\|_{\infty}\\
&= \|V(x_i+y)\|_{\ell^1_i\ell^1_j}\|\xi\|_{\infty}\|\xi'\|_{\infty}\\
&=\|V(x_i+y)\|_{\ell^1_i}\|\xi\|_{\infty}\|\xi'\|_{\infty}\\
&\lesssim R^{d-d/2q}\|V(x_i+y)\|_{\ell^{2q}_i}\|\xi\|_{\infty}\|\xi'\|_{\infty}\\
&\lesssim R^{d-d/2q}\|V(x_i+y)\|_{\ell^{2q}_i} 2^{-k-k'}\|g(\eta_{\nu}+\tau)\|_{\ell^2_{\nu,{\rm av}}}\|g'(\eta_{\nu'}+\tau')\|_{\ell^2_{\nu',{\rm av}}},
\end{align*}
where we used Hölder in the first, second and fifth line, Fubini in the third line, \eqref{cardinality of j} in the fourth, $\supp V\subset B_R$ in the fifth and \eqref{bounds xi in terms of g}, \eqref{bounds xi' in terms of g'} in the last line. Integrating over $y,\tau,\tau'$ and using Hölder as before yields the second bound in the lemma.
\end{proof}

\begin{remark}\label{remark local uniformity}
If we restore the frequency in the extension operator, i.e.\ if we consider $\mathcal{E}_{\lambda}^*V_{\omega} \mathcal{E}_{\lambda'}$, then it is obvious from the proof of Lemma \ref{lemma local extension bound} that the same estimate holds for this operator, locally uniformly in $\lambda,\lambda'\asymp 1$. Explicitly,
\begin{align}
&\sup_{\lambda,\lambda'\asymp 1}\mathbf{E}\|\mathcal{E}_{\lambda}^*V_{\omega} \mathcal{E}_{\lambda'}\|_{L^2(M_{\lambda},\rd\sigma_{\lambda})\to L^2(M_{\lambda'},\rd\sigma_{\lambda'})}\leq A(h,R,V),\label{extension bound A(h,R,V)}\\
&A(h,R,V)\lesssim \langle h\rangle^{d/2}(\log \langle  R\rangle)^{1/2}(\log \langle h\rangle + \log \langle R\rangle)^{2}\|V\|_{L^{2q}(\R^d)}.
\end{align}
\end{remark}

\subsection{Smoothing}\label{subsection smoothing}
We observe that if $m(D)$ is a Fourier multiplier and $B_{R_1}$, $B_{R_2}$ are two balls with the same center, then
\begin{align}\label{smoothing identity}
\mathbf{1}_{B_{R_1}}m(D)\mathbf{1}_{B_{R_2}}=\mathbf{1}_{B_{R_1}}m_R(D)\mathbf{1}_{B_{R_2}},\quad m_R:=\gamma_R\ast m,
\end{align}
whenever $R>R_1+R_2$, $\gamma_R(\xi)=R^d\gamma(R\xi)$ and $(\gamma)^{\vee}$ is a bump function such that $(\gamma)^{\vee}(x)=1$ for $|x|\leq 1$. This can be checked by comparing the kernels of both sides in \eqref{smoothing identity} and using the convolution theorem. The convolution with $\gamma_R$ can be considered a smoothing operator at scale $R^{-1}$.
We recall from Section \ref{section Born series} that $C^{(\delta)}$ denotes a generic function satisfying a bound
\begin{align}\label{Cdelta bound repeated}
|C^{(\delta)}(\xi)|\lesssim (||2\pi \xi|^2-1|+\delta)^{-1/2}.
\end{align}
We will apply \eqref{smoothing identity} to 
\begin{align}\label{def. m(xi)}
m(\xi)=(|2\pi\xi|^2-(1+\I 0)^2)^{-1}
\end{align}
to produce a product of two functions $C^{(\delta)}(\xi)$ satisfying \eqref{Cdelta bound repeated} with $\delta=R^{-1}$.

\begin{lemma}\label{lemma smoothing}
For $R\geq 1$ we have
\begin{align*}
|\gamma_R\ast m|\lesssim R.
\end{align*}
In particular, $(\gamma_R\ast m)^{1/2}$ satisfies \eqref{Cdelta bound repeated} with $\delta=R^{-1}$.
\end{lemma}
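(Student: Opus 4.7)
The stated bound $|\gamma_R\ast m|\lesssim R$ is the worst-case instance of the sharper pointwise estimate $|\gamma_R\ast m(\xi)|\lesssim (|f(\xi)|+R^{-1})^{-1}$, with $f(\xi):=|2\pi\xi|^2-1$; only this refined bound is strong enough for the ``in particular'' clause, since requiring $C^{(\delta)}=(\gamma_R\ast m)^{1/2}$ with $\delta=R^{-1}$ to satisfy \eqref{Cdelta bound repeated} amounts, after squaring, to precisely $|\gamma_R\ast m|\lesssim (|f|+R^{-1})^{-1}$. My plan is to establish this sharper bound via the Sokhotski--Plemelj decomposition $m=\operatorname{PV}(1/f)+i\pi\,\delta(f)$, treating the two pieces separately.

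For the delta part, since $|\nabla f|\equiv 4\pi$ on $\{|2\pi\eta|=1\}$, the distribution $\delta(f)$ is $(4\pi)^{-1}$ times surface measure on this sphere, so $(\gamma_R\ast \delta(f))(\xi)=(4\pi)^{-1}\int_{|2\pi\eta|=1}\gamma_R(\xi-\eta)\,\rd\sigma(\eta)$ is a mollified spherical measure. Exploiting the Schwartz tail $|\gamma_R(x)|\lesssim R^d(1+R|x|)^{-N}$ and parametrizing the sphere locally as a graph over $\R^{d-1}$ near the point closest to $\xi$ will give $|(\gamma_R\ast\delta(f))(\xi)|\lesssim R(1+R\dist(\xi,\{f=0\}))^{-N}$ for any $N$, and since $\dist(\xi,\{f=0\})\asymp|f(\xi)|$ in a neighborhood of the sphere (with $|f|$ uniformly bounded below elsewhere), this contribution has the required form.

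For the PV part I will substitute $u=\eta-\xi$ and use the exact expansion $f(\xi+u)=f(\xi)+\alpha\cdot u+|2\pi u|^2$ with $\alpha:=8\pi^2\xi$. Since $|\gamma_R\ast m|$ is trivially $O(1)$ when $\xi$ is bounded away from the sphere, it suffices to treat $|\alpha|\asymp 1$. Projecting $\gamma_R$ onto the $\alpha$-direction via the 1D marginal $\tilde\gamma_R(s):=|\alpha|^{-1}\int_{\alpha\cdot u=s}\gamma_R(u)\,\rd\sigma(u)$---a Schwartz function of height $\asymp R$ and width $\asymp R^{-1}$---the principal-value integral reduces (modulo error terms) to $\operatorname{PV}\int\tilde\gamma_R(s)/(f(\xi)+s)\,\rd s$, i.e.\ the Hilbert transform of $\tilde\gamma_R$ evaluated at $-f(\xi)$. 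The classical pointwise bound $|\mathcal{H}\tilde\gamma_R(t)|\lesssim R(1+R|t|)^{-1}$ then yields the desired contribution $\lesssim R(1+R|f(\xi)|)^{-1}$.

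The main obstacle will be controlling the error terms: the quadratic remainder $|2\pi u|^2$ in the expansion of $f$, and the Schwartz tail of $\gamma_R$ outside $|u|\lesssim R^{-1}$. On the bulk $|u|\lesssim R^{-1}$ the quadratic piece is $O(R^{-2})$, below the regularization scale $R^{-1}$, so it perturbs the denominator only by a constant factor. On the tail $|u|\gg R^{-1}$ the linearization of $f$ breaks down and $1/|f(\xi+u)|$ may be large where $\xi+u$ happens to lie near the sphere far from $\xi$; here the rapid Schwartz decay of $\gamma_R$ has to dominate, and the local integrability of $1/|f|$ on tubular neighborhoods of the sphere (since $f$ vanishes simply) lets the computation close. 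Assembling the delta and PV estimates then yields $|\gamma_R\ast m(\xi)|\lesssim (|f(\xi)|+R^{-1})^{-1}$, and the lemma (together with the $C^{(\delta)}$-property of its square root) follows.
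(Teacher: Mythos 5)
Your proposal is essentially correct and takes a genuinely different route from the paper, and you make a valid observation along the way. The paper localizes $m$ in frequency to a small conic neighborhood of a coordinate axis, invokes the implicit function theorem to reduce to the one-dimensional model $\gamma_R*\frac{1}{\xi_1+\I 0}$, and then bounds this by passing to the Fourier side, where $\widehat{1/(\xi_1+\I 0)}$ is a bounded (step) function, so the convolution is controlled by $\|\hat\gamma_R\|_1\lesssim R$. Your argument instead stays on the physical side: it splits $m$ via Sokhotski--Plemelj into $\operatorname{PV}(1/f)+\I\pi\delta(f)$, controls the mollified surface measure by the Schwartz decay of $\gamma_R$, and reduces the PV piece to a 1D Hilbert transform of the marginal of $\gamma_R$ in the normal direction to the level set. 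You are right that the ``in particular'' clause actually requires the stronger pointwise estimate $|\gamma_R*m(\xi)|\lesssim(|f(\xi)|+R^{-1})^{-1}$ rather than just the uniform bound $\lesssim R$; your decomposition produces this directly, whereas the paper's Fourier-side proof as written gives only the worst case (the sharper decay for $|f(\xi)|\gg R^{-1}$ follows there by one integration by parts, or trivially since $\gamma_R*m\approx m$ away from the sphere, but this is left implicit). The trade-off: the paper's argument is shorter and transfers the entire difficulty to a well-known fact about the Fourier transform of $(\xi_1+\I 0)^{-1}$, while your argument is more hands-on and explicitly displays where the regularization scale $R^{-1}$ enters (as the width of $\tilde\gamma_R$). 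The one place in your plan that needs real care is the quadratic remainder $|2\pi u|^2$ in $f(\xi+u)$ together with the tail of $\gamma_R$: the error term $|2\pi u|^2/[(f(\xi)+\alpha\cdot u)(f(\xi+u))]$ is itself singular where $f(\xi)+\alpha\cdot u=0$, so ``perturbs the denominator by a constant factor'' is not automatic and one must really run a change-of-variables argument (straightening the level sets of $f$, as the implicit function theorem in the paper does) or otherwise match singularities before estimating. You flag exactly this as the main obstacle, which is the right instinct; if carried out carefully the route closes.
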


\begin{proof}
By a partition of unity we may assume that $m$ is supported in a small conic neighborhood of the first coordinate axis. The implicit function theorem then allows us to reduce the proof to the following bound,
\begin{align*}
|\gamma_R\ast \frac{1}{\xi_1+\I 0}|\lesssim R,
\end{align*}
where $\gamma_R(\xi_1)=R\gamma(R\xi_1)$ is a function of one variable. By the convolution theorem, 
\begin{align*}
|\gamma_R\ast \frac{1}{\xi_1+\I 0}|\lesssim \|\hat{\gamma}_R\|_1\lesssim R,
\end{align*}
where we used that the Fourier transform of $(\xi_1+\I 0)^{-1}$ is bounded. See also \cite[Lemma 5.2]{Ruiz_lecturenotes} for an alternative proof.
\end{proof}

\begin{remark}
The boundary value in \eqref{def. m(xi)} is defined in the usual way (in the sense of tempered distributions, see e.g.\ \cite{MR1065993}). The analogue expression with $(1-\I 0)^2$ clearly satisfies the same bound. A similar argument (using the Malgrange preparation theorem) also works for $\epsilon$ nonzero and fixed. This argument is presented in the proof of Lemma 23 in \cite{bogli2021counterexample}. Alternatively, one can work with the boundary values throughout and appeal to the Phragm\'en-Lindel\"of maximum principle to extend the results to nonzero $\epsilon$ (see e.g.\ \cite[Appendix A]{MR3608659}, \cite{MR4150258}, \cite{Ruiz2002}). We will not pursue this issue.
\end{remark}

In practice, we are working with a localized version  of \eqref{def. m(xi)}, supported near the singular manifold $M$. Even though $\gamma_R\ast m$ loses compact support, it decays rapidly away from $M$ on the $1/R$ scale. Neglecting the tail (which can be bounded in a straightforward way), we assume that all functions $C^{(\delta)}$ that appear from now on are compactly supported in a small neighborhood of $M$. Alternatively, one could avoid tails by smoothing the resolvent first and then perform the low/high decomposition as in Section \ref{section Born series}.

\subsection{Foliation by level sets}
In the following we will assume that $C^{(\delta)}$ is supported in a $c$-neighborhood ($c$ small and fixed) of $M$ and satisfies \eqref{Cdelta bound repeated}. We will also assume that $\lambda\in [1-c,1+c]$ and denote the constant $A(h,R,V)$ appearing in \eqref{extension bound A(h,R,V)} by $A$.

\begin{lemma}\label{lemma foliation}
Assume that \eqref{extension bound A(h,R,V)}, \eqref{Cdelta bound repeated} hold. Then we have 
\begin{align}\label{bound 2}
\|\mathcal{E}^*_{\lambda} VC^{(\delta)}\|_{L^2(\R^d)\to L^2(M_{\lambda})}\lesssim A(\log\frac{1}{\delta})^{\frac{1}{2}}.
\end{align}
Moreover, if \eqref{Cdelta bound repeated} holds for $C^{(\delta_1)}$, $C^{(\delta_2)}$, then
\begin{align}\label{bound 3}
\|C^{(\delta_1)}VC^{(\delta_2)}\|_{L^2(\R^d)\to L^2(\R^d)}\lesssim A(\log\frac{1}{\delta_1})^{\frac{1}{2}}(\log\frac{1}{\delta_2})^{\frac{1}{2}}.
\end{align}
\end{lemma}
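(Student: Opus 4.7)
The plan is to derive both bounds from the uniform extension bound of Remark~\ref{remark local uniformity} by \textbf{foliating} the frequency space near the singular sphere by the one-parameter family $\{M_{\mu}\}_{\mu\asymp 1}$, paying only a logarithm in the radial direction coming from the pointwise bound \eqref{Cdelta bound repeated}.

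For \eqref{bound 2}, I would first dualize to the equivalent statement
\begin{align*}
\|C^{(\delta)}V\mathcal{E}_{\lambda}\|_{L^2(M_{\lambda})\to L^2(\R^d)}\lesssim A(\log(1/\delta))^{1/2}.
\end{align*}
Given $g\in L^2(M_{\lambda})$ and $h:=V\mathcal{E}_{\lambda}g$, the identity $\mathcal{E}_{\mu}^*h=\hat h|_{M_{\mu}}$ (immediate from the definition of the adjoint of the extension operator) together with Plancherel, \eqref{Cdelta bound repeated}, and the coarea formula gives
\begin{align*}
\|C^{(\delta)}h\|_{L^2}^2\lesssim \int_{\R^d}\frac{|\hat h(\xi)|^2}{||2\pi\xi|^2-1|+\delta}\,\rd\xi\asymp \int_{\mu\asymp 1}\frac{\|\mathcal{E}_{\mu}^*V\mathcal{E}_{\lambda}g\|_{L^2(M_{\mu})}^2}{|\mu^2-1|+\delta}\,\rd\mu.
\end{align*}
By Remark~\ref{remark local uniformity} the numerator is at most $A^2\|g\|_{L^2(M_{\lambda})}^2$ uniformly in $\mu\asymp 1$, and the elementary integral $\int_{\mu\asymp 1}(|\mu^2-1|+\delta)^{-1}\,\rd\mu$ is $O(\log(1/\delta))$, which yields \eqref{bound 2}.

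For \eqref{bound 3}, I would apply the same foliation on both sides. Setting $u:=C^{(\delta_2)}f$ and $v:=\overline{C^{(\delta_1)}}g$, Fourier inversion in polar coordinates gives representations $u=\int\mathcal{E}_{\mu}(\hat u|_{M_{\mu}})\,\rd\mu$ and $v=\int\mathcal{E}_{\nu}(\hat v|_{M_{\nu}})\,\rd\nu$. Pairing $\langle Vu,v\rangle$ and expanding, then applying \eqref{extension bound A(h,R,V)} slicewise in $(\mu,\nu)$, produces the factorized bound
\begin{align*}
|\langle C^{(\delta_1)}VC^{(\delta_2)}f,g\rangle|\lesssim A\Bigl(\int\|\hat u|_{M_{\mu}}\|_{L^2(M_{\mu})}\,\rd\mu\Bigr)\Bigl(\int\|\hat v|_{M_{\nu}}\|_{L^2(M_{\nu})}\,\rd\nu\Bigr).
\end{align*}
A single Cauchy--Schwarz in each radial variable, combined with \eqref{Cdelta bound repeated} and the coarea identity $\int\|\hat f|_{M_{\mu}}\|_{L^2(M_{\mu})}^2\,\rd\mu\asymp\|f\|_{L^2}^2$, controls each factor by $(\log(1/\delta_2))^{1/2}\|f\|_{L^2}$ and $(\log(1/\delta_1))^{1/2}\|g\|_{L^2}$, respectively, yielding \eqref{bound 3}.

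The main obstacle is purely bookkeeping: correctly tracking the coarea Jacobian and the factors of $2\pi$ between the symbol's singular set $\{|2\pi\xi|=1\}$ and the spheres $M_{\mu}$, and verifying that the compact support assumption on $C^{(\delta)}$ (near the singular manifold, after the smoothing of Section~\ref{subsection smoothing}) restricts $\mu$ to a small fixed neighborhood of~$1$ where the uniform bound of Remark~\ref{remark local uniformity} applies. Once this is set up, both estimates reduce cleanly to the interplay between the $L^2$-extension bound (encoded in $A$) and the elementary logarithmic radial integral.
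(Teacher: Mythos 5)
Your proposal is correct, uses the same core ingredients (foliation by level sets $M_\mu$, the uniform bilinear extension bound encoded in $A$ from Remark~\ref{remark local uniformity}, the elementary logarithmic radial integral from \eqref{Cdelta bound repeated}), but organizes the argument a bit differently than the paper. For \eqref{bound 2} you pass to the adjoint and use Plancherel plus coarea to write $\|C^{(\delta)}h\|_{L^2}^2$ directly as a weighted radial integral of $\|\mathcal{E}_\mu^* h\|_{L^2(M_\mu)}^2$ and then pull the uniform bound out; the paper instead expands $C^{(\delta)}f$ via the coarea formula and applies Cauchy--Schwarz in the radial variable --- a dual formulation of the same computation. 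The more genuine difference is in \eqref{bound 3}: you foliate symmetrically on both sides, apply the bilinear bound $\|\mathcal{E}_\nu^* V \mathcal{E}_\mu\|\le A$ slicewise in $(\mu,\nu)$, and then Cauchy--Schwarz each radial integral separately, whereas the paper treats the two $C^{(\delta_i)}$ factors asymmetrically, reusing \eqref{bound 2} together with the dual energy estimate \eqref{bound 5}. Both routes give identical bounds; your symmetric treatment is slightly more self-contained, while the paper's makes \eqref{bound 2} do double duty. One point worth making explicit (true of both arguments): the lemma is a deterministic statement, so $A$ should be read as a deterministic bound on $\sup_{\lambda,\lambda'\asymp 1}\|\mathcal{E}_\lambda^* V \mathcal{E}_{\lambda'}\|$ for the fixed potential under consideration; the expectation in \eqref{extension bound A(h,R,V)} only enters afterwards when this is combined with the tail bound of Lemma~\ref{lemma tail bound}. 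With that understood, your slicewise use of the uniform extension bound per realization is legitimate.
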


\begin{proof}
For $f\in L^2(\R^d)$ we foliate by level sets $M_{\lambda}$,
\begin{align}\label{foliation}
C^{(\delta)}f(x)
=\int_{1-c}^{1+c} \int_{M_{\lambda'}}\e(x\cdot\xi)C^{(\delta)}(\xi)\widehat{f}(\xi)\rd \sigma_{\lambda'}(\xi)\rd \lambda',
\end{align}
up to an innocuous Jacobian factor. Using \eqref{Cdelta bound repeated} and the fact that $(\rd \sigma_{\lambda})^{\vee}\ast f$ is a constant multiple of $\mathcal{E}_{\lambda}\mathcal{E}^*_{\lambda}f$, we get, by Cauchy--Schwarz,
\begin{align*}
\|\mathcal{E}^*_{\lambda} VC^{(\delta)}f\|_{L^2(M)}&
\leq A (\int_{1-c}^{1+c}\rd \lambda'(|\lambda'-1|+\delta)^{-1})^{1/2}(\int_{1-c}^{1+c}\rd \lambda' \|\mathcal{E}_{\lambda'}f\|^2_{L^2(M_{\lambda'})})^{1/2}\\
&\lesssim A(\log\frac{1}{\delta})^{\frac{1}{2}}\|f\|_{2}.
\end{align*}
where we used 
\begin{align}\label{bound 4}
\int_{1-c}^{1+c}\rd \lambda' \|\mathcal{E}_{\lambda'}^*f\|^2_{L^2(M_{\lambda'})}=\int_{1-c}^{1+c}\rd \lambda' \int_{M_{\lambda'}}|\widehat{f}(\xi)|^2\rd \sigma_{\lambda'}(\xi)\lesssim \|f\|_{L^2(\R^d)}^2
\end{align}
and 
\begin{align}\label{log}
\int_{1-c}^{1+c}\rd \lambda'(|\lambda'-1|+\delta)^{-1}\lesssim \log\frac{1}{\delta}.
\end{align}
This proves \eqref{bound 2}.
To prove \eqref{bound 3} we use the dual estimate to \eqref{bound 4}, which is
\begin{align}\label{bound 5}
\|\int_{1-c}^{1+c} \mathcal{E}_{\lambda'}g(\lambda')\rd \lambda'\|_{L^2(\R^d)}\lesssim (\int_{1-c}^{1+c} \|g(\lambda')\|^2_{L^2(M_{\lambda'})}\rd \lambda')^{1/2}
\end{align}
for $g(\lambda')\in L^2(M_{\lambda'})$. This follows from
\begin{align*}
\int_{1-c}^{1+c}\langle \mathcal{E}_{\lambda'}^* f,g(\lambda')\rangle_{L^2(M_{\lambda'})}\rd \lambda'=\langle f,\int_{1-c}^{1+c} \mathcal{E}_{\lambda'}g(\lambda')\rd \lambda'\rangle_{L^2(\R^d)}.
\end{align*}
Using the foliation \eqref{foliation} for the $C^{(\delta_1)}$ factor and using \eqref{bound 2}, \eqref{log}, \eqref{bound 5} gives, with $g(\lambda')=(|\lambda'-1|+\delta_1)^{-1/2}\mathcal{E}_{\lambda'}^*VC^{(\delta_2)}f$, 
\begin{align*}
\|C^{(\delta_1)}VC^{(\delta_2)}f\|_{L^2(\R^d)}&\lesssim
\|\int_{1-c}^{1+c}\mathcal{E}_{\lambda'}g(\lambda')\rd \lambda'\|_{L^2(\R^d)}\lesssim(\int_{1-c}^{1+c} \|g(\lambda')\|^2_{L^2(M_{\lambda'})}\rd \lambda')^{1/2}\\
&\lesssim  A(\log\frac{1}{\delta_1})^{\frac{1}{2}}(\log\frac{1}{\delta_2})^{\frac{1}{2}}\|f\|_{L^2(\R^d)}.
\end{align*}
\end{proof}

\subsection{Local resolvent bound}
We use the same conventions as in the previous section. Additionally, in the following, the norm is the $L^2(\R^d)\to L^2(\R^d)$ operator norm. Recall that, by the discussion at the end of Section \ref{subsection smoothing}, the square root of the localized resolvent $R_0^{\rm low}$ can be replaced by a compactly supported multiplier satisfying the bound \eqref{Cdelta bound repeated} with $\delta=1/R$.
As a consequence of Lemma \ref{lemma local extension bound}, \eqref{bound 3} and the discussion in Section \ref{section Born series}, we immediately obtain the following resolvent bound.

\begin{lemma}\label{lemma local resolvent bound}
Assume that \eqref{Cdelta bound repeated} holds for 
$C^{(\delta_1)}$, $C^{(\delta_2)}$, with $\delta_1,\delta_2\asymp 1/R$. Then we have
\begin{align*}
\mathbf{E}\|C^{(\delta_2)}V_{\omega}C^{(\delta_1)}\|\lesssim \langle  h\rangle^{d/2}(\log \langle  R\rangle)^{3/2}(\log \langle  h\rangle + \log \langle  R\rangle)^{2}\|V\|_{L^{2q}(\R^d)}.
\end{align*}
\end{lemma}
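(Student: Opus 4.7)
The plan is to chain together the ``foliation'' estimate \eqref{bound 3} from Lemma \ref{lemma foliation} with the expectation-form extension bound of Lemma \ref{lemma local extension bound} in its locally uniform version (Remark \ref{remark local uniformity}). The point of \eqref{bound 3} is that it trades an $L^2(M_\lambda)\to L^2(M_{\lambda'})$ bound on the sphere-to-sphere operator $\mathcal{E}_\lambda^* V \mathcal{E}_{\lambda'}$ for an $L^2(\R^d)\to L^2(\R^d)$ bound on the ambient product $C^{(\delta_2)} V C^{(\delta_1)}$, at the cost of a factor $(\log(1/\delta_1))^{1/2}(\log(1/\delta_2))^{1/2}$. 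Under the hypothesis $\delta_1,\delta_2\asymp 1/R$, this factor is $\asymp \log\langle R\rangle$.

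Concretely, applying \eqref{bound 3} with $V=V_\omega$ yields a pointwise (in $\omega$) estimate
\begin{align*}
\|C^{(\delta_2)} V_\omega C^{(\delta_1)}\|_{L^2\to L^2} \lesssim A_\omega\, \log\langle R\rangle,
\end{align*}
where $A_\omega$ may be taken to be any upper bound on $\sup_{\lambda,\lambda'\asymp 1}\|\mathcal{E}_\lambda^* V_\omega \mathcal{E}_{\lambda'}\|$. Taking expectations and invoking Remark \ref{remark local uniformity} to control $\mathbf{E}\|\mathcal{E}_\lambda^* V_\omega \mathcal{E}_{\lambda'}\|\leq A(h,R,V)$ uniformly in $\lambda,\lambda'\asymp 1$, with
\begin{align*}
A(h,R,V)\lesssim \langle h\rangle^{d/2}(\log\langle R\rangle)^{1/2}(\log\langle h\rangle+\log\langle R\rangle)^{2}\|V\|_{L^{2q}(\R^d)},
\end{align*}
adds a further $(\log\langle R\rangle)^{1/2}$ so that the total power of $\log\langle R\rangle$ becomes $3/2$. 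This reproduces the stated estimate.

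The only real technicality is the commutation of $\mathbf{E}$ with the supremum over $\lambda,\lambda'$ implicit in $A_\omega$. My preferred way to handle this is to avoid the exchange altogether: simply redo the short proof of Lemma \ref{lemma foliation} with $V=V_\omega$ and push $\mathbf{E}$ inside the $d\lambda'$ foliation integral by Minkowski/Fubini \emph{before} invoking the pointwise bound from Remark \ref{remark local uniformity}. This turns the computation into an integral of $\mathbf{E}\|\mathcal{E}_\lambda^* V_\omega \mathcal{E}_{\lambda'}\|$ against the Cauchy--Schwarz weight $(|\lambda'-1|+\delta)^{-1/2}$, and no sup ever needs to be interchanged with the expectation. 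As a fallback, continuity of $\lambda\mapsto \mathcal{E}_\lambda$ lets one replace the sup by a maximum over a $1/R$-net of $O(R^{O(1)})$ points, so that Proposition \ref{prop. properties of subgaussian rv}(ii) produces only a harmless additional $(\log R)^{1/2}$ factor that is readily absorbed. Beyond this minor point, the entire argument is bookkeeping of logarithmic factors from the two pre-established lemmas.
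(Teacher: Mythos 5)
Your proposal follows exactly the paper's (one-line) proof: chain Lemma~\ref{lemma local extension bound} with the foliation bound \eqref{bound 3}, picking up one factor of $\log\langle R\rangle$ from the two $(\log(1/\delta_i))^{1/2}$ and another $(\log\langle R\rangle)^{1/2}$ from the extension bound itself. You correctly flag a subtlety that the paper elides, namely that \eqref{bound 3} produces a pointwise-in-$\omega$ bound in terms of $A_\omega=\sup_{\lambda,\lambda'}\|\mathcal{E}_\lambda^* V_\omega\mathcal{E}_{\lambda'}\|$, while Remark~\ref{remark local uniformity} only controls $\sup_{\lambda,\lambda'}\mathbf{E}\|\cdot\|$, so a sup/expectation interchange is needed. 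Your preferred fix is right in spirit, but the phrase ``Minkowski/Fubini before invoking the pointwise bound'' glosses over the fact that the operator norm carries its own $\sup_{f,g}$: the clean implementation is to first run the two Cauchy--Schwarz steps in $\lambda,\lambda'$ pointwise in $\omega$, which yields the deterministic inequality
\begin{align*}
\|C^{(\delta_2)}V_\omega C^{(\delta_1)}\|\lesssim \Bigl(\iint \|\mathcal{E}_\lambda^* V_\omega \mathcal{E}_{\lambda'}\|^2\,(|\lambda-1|+\delta_2)^{-1}(|\lambda'-1|+\delta_1)^{-1}\,\rd\lambda\,\rd\lambda'\Bigr)^{1/2},
\end{align*}
and then apply Jensen and Fubini to push $\mathbf{E}$ inside, finishing with the Gaussian/Bernoulli moment comparison $\mathbf{E}\|X\|^2\asymp(\mathbf{E}\|X\|)^2$ recorded in Section~\ref{Section Randomization}. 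Your fallback ($1/R$-net plus Proposition~\ref{prop. properties of subgaussian rv}(ii)) also works, but the extra $(\log R)^{1/2}$ it produces is \emph{not} absorbed into the stated $(\log\langle R\rangle)^{3/2}$; it would only give the slightly weaker exponent $2$ (and hence $(\log\langle\lambda R\rangle)^4$ in Theorem~\ref{thm. 1} instead of $7/2$), which is immaterial for the paper's qualitative conclusions but does change the numerology.
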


By using the tail bound of Lemma \ref{lemma tail bound} and rescaling, we obtain the following corollary.

\begin{corollary}\label{cor. spectral radius}
Let $h,R,\lambda,M>0$ and let $|\epsilon|\ll \lambda$. Then the spectral radius of $R_0((\lambda+\I\epsilon)^2)V_{\omega}$ is bounded by
\begin{align*}
\spr(R_0V)\lesssim M\langle \lambda h\rangle^{d/2}(\log \langle \lambda R\rangle)^{3/2}(\log \langle \lambda h\rangle + \log \langle\lambda R\rangle)^{2}\lambda^{\frac{d}{2q}-2}\|V\|_{L^{2q}(\R^d)},
\end{align*}
except for $\omega$ in a set of measure at most
$\exp(-cM^2)$.
\end{corollary}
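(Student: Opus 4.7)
The plan is to combine four ingredients: a scaling reduction to unit energy, the Born-series bound on the spectral radius from Proposition \ref{prop. spectral radius <1}, the expected-norm bound of Lemma \ref{lemma local resolvent bound}, and the concentration inequality of Lemma \ref{lemma tail bound}.

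First I would rescale to reduce to $\lambda=1$. Using the $L^2$-unitary dilation $(U_\lambda f)(\tilde x)=\lambda^{-d/2}f(\tilde x/\lambda)$ and the homogeneity $U_\lambda(-\Delta)U_\lambda^{-1}=\lambda^2(-\Delta)$, one obtains
\[
U_\lambda R_0((\lambda+\I\epsilon)^2) V_\omega U_\lambda^{-1}=\tilde R_0((1+\I\epsilon/\lambda)^2)W_\omega,
\]
where $W_\omega(\tilde x)=\lambda^{-2}V_\omega(\tilde x/\lambda)$ is supported in a ball of radius $\tilde R=\lambda R$, is randomized at scale $\tilde h=\lambda h$, and satisfies $\|W\|_{L^{2q}}=\lambda^{d/(2q)-2}\|V\|_{L^{2q}}$. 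Since $|\epsilon|/\lambda\ll 1$ and spectral radii are unitary-invariant, it suffices to estimate $\spr(\tilde R_0 W_\omega)$. By the discussion leading to Proposition \ref{prop. spectral radius <1} (Gelfand's formula applied to the Born series \eqref{multilinear expansion Born series}), this spectral radius is majorized by $\sup\|C^{(\delta_2)}W_\omega C^{(\delta_1)}\|_{L^2\to L^2}$, the supremum taken over admissible Fourier multipliers satisfying \eqref{Cdelta bound repeated} with $\delta_1,\delta_2\asymp 1/\tilde R$.

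Next, Lemma \ref{lemma local resolvent bound} applied to $W_\omega$ bounds the expectation of each such norm by
\[
B:=\langle\lambda h\rangle^{d/2}(\log\langle\lambda R\rangle)^{3/2}(\log\langle\lambda h\rangle+\log\langle\lambda R\rangle)^2\|W\|_{L^{2q}}.
\]
Crucially, its proof (via Lemmas \ref{lemma foliation} and \ref{lemma local extension bound}) uses the multipliers $C^{(\delta_i)}$ only through the pointwise bound \eqref{Cdelta bound repeated}, so the estimate is uniform in admissible $C^{(\delta_i)}$ and, in particular, controls $\mathbf{E}\sup\|C^{(\delta_2)}W_\omega C^{(\delta_1)}\|\lesssim B$. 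Viewing this supremum as the norm of the operator-valued Gaussian/Bernoulli random vector $\omega\mapsto W_\omega$ in the Banach space of sandwiched operators $C^{(\delta_2)}(\cdot)C^{(\delta_1)}$ equipped with the uniform norm over admissible $C^{(\delta_i)}$, Lemma \ref{lemma tail bound} yields $\sup\|C^{(\delta_2)}W_\omega C^{(\delta_1)}\|\leq MB$ off a set of $\omega$ of measure $\leq\exp(-cM^2)$. Substituting $\|W\|_{L^{2q}}=\lambda^{d/(2q)-2}\|V\|_{L^{2q}}$ and chaining the inequalities recovers the stated estimate.

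The main technical obstacle is the passage from the pointwise-in-$C^{(\delta_i)}$ expectation bound of Lemma \ref{lemma local resolvent bound} to a concentration inequality on the supremum over admissible multipliers. I would handle this by observing that the proof of Lemma \ref{lemma foliation} actually bounds $\|C^{(\delta_2)}W_\omega C^{(\delta_1)}\|$ \emph{deterministically} (for each $\omega$) by $(\log\tilde R)\cdot\sup_{\lambda,\lambda'\asymp 1}\|\mathcal{E}_\lambda^*W_\omega\mathcal{E}_{\lambda'}\|$, uniformly in admissible $C^{(\delta_i)}$; it then remains to upgrade Remark \ref{remark local uniformity} to a bound on $\mathbf{E}\sup_{\lambda,\lambda'}\|\mathcal{E}_\lambda^*W_\omega\mathcal{E}_{\lambda'}\|$, which is done by a standard $\epsilon$-net / union-bound argument on the compact parameter set $\{(\lambda,\lambda'):\lambda,\lambda'\asymp 1\}$, losing at most logarithmic factors that are already absorbed into $B$.
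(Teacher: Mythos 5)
Your proposal is correct and follows the paper's approach: rescale to $\lambda=1$, reduce the spectral radius to $\sup_{C^{(\delta_i)}}\|C^{(\delta_2)}V_{\omega}C^{(\delta_1)}\|$ via Proposition~\ref{prop. spectral radius <1}, invoke the expectation bound of Lemma~\ref{lemma local resolvent bound}, and apply the tail bound of Lemma~\ref{lemma tail bound}. The uniformity issue you flag (passing from expectation bounds for fixed $C^{(\delta_i)}$ to a bound on the supremum) is real but is already built into the paper's argument, since Lemma~\ref{lemma foliation} is deterministic in $\omega$ and the chaining/entropy argument of Lemma~\ref{lemma local extension bound} yields $\mathbf{E}\sup_{\lambda,\lambda'\asymp 1}\|\mathcal{E}_\lambda^*V_\omega\mathcal{E}_{\lambda'}\|$ directly (this is the meaning of ``locally uniformly'' in Remark~\ref{remark local uniformity}); your $\epsilon$-net/union-bound alternative is a valid, if slightly more roundabout, way to reach the same conclusion.
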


\subsection{Completion of the proof of Theorem \ref{thm. 1}}
We first undo the change of variables $q\to 2q$.
Theorem \ref{thm. 1} then follows immediately from Proposition \ref{prop. spectral radius <1} and Corollary \ref{cor. spectral radius}. \qed

\section{Local to global arguments}\label{section Local to global arguments}

\subsection{Proof of Theorem \ref{thm. 2}}
To complete the proof of Theorem \ref{thm. 2} we rescale again to $\lambda=1$. We
decompose $V=\sum_{k\in\Z^+} V_k$ into dyadic pieces with support in $\{0\leq |x|\leq 1\}$ for $k=0$ and in $\{2^{k-1}\leq |x|\leq 2^{k}\}$ for $k\geq 1$. The assumption on $V$ guarantees that $\|V_k\|_{q}\leq 2^{-\delta k}\|\langle x\rangle^{\delta}V\|_q$. 
Instead of \eqref{multilinear expansion Born series}, we consider the multilinear expansion 
\begin{align*}
[R_0V]^n=\sum_{\sigma_1,\ldots,\sigma_n}\sum_{k_1,\ldots,k_n}R_0^{\sigma_1}V_{k_1}R_0^{\sigma_2}V_{k_2}\ldots R_0^{\sigma_n}V_{k_n},
\end{align*}
where we again omitted the spectral parameter $z$, and we are assuming, as we may, that $z=(1+\I\epsilon)^2$, $|\epsilon|\ll 1$.
kBy the same arguments as in Section \ref{section Born series} it suffices to estimate the norms of elementary blocks of the form $C^{(\delta_{l-1})}V_{k_l}C^{(\delta_l)}$, where $\delta_l=(2^{k_l}+2^{k_{l-1}})^{-1}$.
Lemmas \ref{lemma local extension bound}, \ref{lemma foliation} and an analogue of Lemma \ref{lemma smoothing} with $\delta=\delta_l$ or $\delta_{l-1}$ yield that (again undoing the change of variables $q\to 2q$) 
\begin{align*}
\mathbf{E}\|C^{(\delta_{l-1})}V_{k_l}C^{(\delta_l)}\|\lesssim (k_{l-1}+k_l+k_{l+1}) \langle  h\rangle^{d/2}\langle k_l\rangle^{1/2}(\log \langle  h\rangle + \langle k_l\rangle)^{2}2^{-\delta k_l}\|\langle x\rangle^{\delta}V\|_{q}.
\end{align*}
Applying the tail bound of Lemma \ref{lemma tail bound} 
yields that
\begin{align*}
\|C^{(\delta_{l-1})}V_{k_l}C^{(\delta_l)}\|\leq M_1 (k_{l-1}+k_l+k_{l+1}) \langle  h\rangle^{d/2}\langle k_l\rangle^{1/2}(\log \langle  h\rangle + \langle k_l\rangle)^{2}2^{-\delta k_l}\|\langle x\rangle^{\delta}V\|_{q},
\end{align*}
except for $\omega$ in a set of measure at most $\exp(-c'M_1^2)$. Choosing $M_1=M(k_{l-1}+k_l+k_{l+1})$ and summing the previous bound over $k_1,\ldots,k_n$ yields
\begin{align*}
\spr(R_0V)=\lim_{n\to\infty}\|[R_0V]^n\|^{1/n}\lesssim \langle  h\rangle^{d/2}(\log \langle  h\rangle)^{2}\|\langle x\rangle^{\delta}V\|_{q},
\end{align*}
except for $\omega$ in a set of measure at most
\begin{align*}
\sum_{k_{l-1},k_l,k_{l+1}}\exp(-c'M_1^2)\leq \exp(-cM^2).
\end{align*}
This concludes the proof of Theorem \ref{thm. 2}. \qed

\subsection{Sparse decomposition}\label{subsect. sparse decomp.}
To prove Theorem \ref{thm. 3} we use a device reminiscent of an ``epsilon removal lemma" (see e.g.\ \cite{MR1666558}) but adapted to our multilinear bounds (and the resolvent as opposed to the Fourier restriction operator). For this reason, we need to perform several decompositions simultaneously:

\begin{enumerate}
\item We first decompose $V$ dyadically:
\begin{align*}
V=\sum_{i\in\Z_+}V_i,\quad V_i=V\mathbf{1}_{H_i\geq |V|\geq H_{i+1}},\quad H_i=\inf\{t>0:\,|\{|V|>t\}|\leq 2^{i-1}\}.
\end{align*}
This is a ``horizontal" dyadic decomposition since the widths of the supports of $V_i$ are approximately $2^i$.
Here we are assuming that $V$ is constant on the unit scale (hence $i\geq 0$ in the sum above). In view of \eqref{smoothing identity for V}, there is no loss of generality in this assumption for the purpose of proving estimates (this is the same argument as explained in the paragraph before  \cite[Lemma 3.3]{MR1666558}).
Note that we have 
\begin{align*}
\|H_i2^{i/q}\|_{\ell^r_i(\Z_+)}\asymp \|V\|_{L^{q,r}},
\end{align*}
where $L^{q,r}$ denotes a Lorentz space (see e.g.\ \cite[Thm. 6.6]{TaoNotes1247A}). Also note that $L^{q,q}=L^q$.
\item Next, split each dyadic piece into a sum of ``sparse families",
\begin{align}\label{sparse decomp.}
V_i=\sum_{j=1}^{K_i}\sum_{k=1}^{N_i}V_{ijk},
\end{align}
where, for fixed $i,j$, the $V_{ijk}$ are supported on a ``sparse collection" of balls $\{B(x_k,R_i)\}_{k=1}^{N_i}$. By this we mean that the support of $V_{ijk}$ is contained in $B(x_k,R_i)$ and that the following definition is satisfied (cf.\ \cite[Def. 3.1]{MR1666558}) for some sufficiently large $\gamma$ (to be chosen later):
\end{enumerate}

\begin{definition}
A collection $\{B(x_k,R)\}_{k=1}^N$ is $\gamma$-sparse if the centers $x_k$ are $(RN)^{\gamma}$ separated.
\end{definition}

For fixed $\gamma>0$ and $K>0$, \cite[Lemma 3.3]{MR1666558} asserts that \eqref{sparse decomp.} holds with 
\begin{align}\label{KiNiRi}
K_i=\mathcal{O}(K2^{i/K}),\quad N_i=\mathcal{O}(2^i),\quad R_i=\mathcal{O}(2^{i\gamma^K}).
\end{align}

\subsection{Spectral radius estimates}\label{subsection Spectral radius estimates}
The preceding decompositions produce a multilinear expansion of the Born series,
 \begin{align}\label{Born series alpha}
 [R_0V]^n=\sum_{\alpha_1,\ldots,\alpha_n}R_0V_{\alpha_1}R_0V_{\alpha_2}\ldots R_0V_{\alpha_n},
 \end{align}
where $\alpha_l=(i_l,j_l,k_l)$ and $i_l\in\Z_+$, $1\leq j_l\leq K_{i_l}$, $1\leq k_l\leq N_{i_l}$. To estimate the spectral radius of $R_0V$, we estimate the summands in~\eqref{Born series alpha} in two different ways. For the first estimate, we follow a similar strategy as before. However, since the smoothing of the resolvent (see Subsection \ref{subsection smoothing}) now depends on the mutual positions of the supports of $V_{\alpha_l}$, we consider the following (slightly more general) elementary operators,
\begin{align}\label{elementary local to global}
\mathbf{1}_{B_1}C^{(\delta_1)}\mathbf{1}_{B_2}WC^{(\delta_2)}\mathbf{1}_{B_3},
\end{align}
where $B_k=B(x_k,R_k)$ are arbitrary balls and $W$ is a bounded potential. As before, $C^{(\delta)}$ are Fourier multipliers satisfying \eqref{Cdelta bound repeated}, now with
\begin{align*}
\delta_1=\langle\rd(B_1,B_2)+2R_1+2R_2\rangle^{-1},\quad \delta_2=\langle\rd(B_2,B_3)+2R_2+2R_3\rangle^{-1}.
\end{align*}
The operators \eqref{elementary local to global} arise from an analogue of \eqref{smoothing identity} and Lemma \ref{lemma smoothing} for balls with different centers. In the same way that Lemma~\ref{lemma local resolvent bound} and its corollary follow from 
Lemma \ref{lemma local extension bound}, \eqref{bound 3}
and the tail bound of Lemma \ref{lemma tail bound}, we obtain 
\begin{align}\label{random sparse bound}
\|\mathbf{1}_{B_1}C^{(\delta_1)}\mathbf{1}_{B_2}W_{\omega}C^{(\delta_2)}\mathbf{1}_{B_3}\|\leq M_1h^{\frac{d}{2}}(\log h)^2[\log(\frac{1}{\delta_1}+\frac{1}{\delta_2})]^{\mathcal{O}(1)}\|W\|_{L^q(B_2)}
\end{align}
for any $q\leq d+1$ and for all $\omega$ except for a set of measure at most $\exp(-c' M_1^2)$. Here we have assumed again, as we may, that $\lambda=1$, $R,h>2$. For the remainder of this section we omit the (obvious) dependence on $h$. We also switch from the (modified)
Vinogradov notation $A\lesssim B$ to the Hardy notation $A\leq C B$ or Landau notation $A=\mathcal{O}(B)$, and we indicate the dependence of constants on $q$ (since $q$ will no longer be in a compact interval) or other related parameters. It is also convenient to use the letter $A$ for quantities (norms, constants) containing $\mathcal{O}(1)$ terms that are bounded uniformly in $n$ (and may change from line to line).

The case of interest is of course when the balls in \eqref{random sparse bound} contain the supports of the potentials in \eqref{Born series alpha} and $W$ is one of these potentials. Similarly as in the proof of Theorem \ref{thm. 2}, we may choose $M_1=M[\log(1/\delta_1+1/\delta_2)]^{\mathcal{O}(1)}$ without qualitatively changing the estimate \eqref{random sparse bound}. In this way,
the union bound for the probability of the complementary event yields
\begin{align*}
&\mathbf{P}(\bigcup_{\alpha_1,\alpha_2,\alpha_3}\{\omega:\, \mbox{\eqref{random sparse bound} does not hold}\})\leq \sum_{\alpha_1,\alpha_2,\alpha_3}\exp(-c' M_1^2)\\
&\leq  \sum_{i_1,i_2,i_3}N_{i_1}K_{i_1}N_{i_2}K_{i_2}N_{i_3}K_{i_3}\exp(-c' M_1^2)
\leq \exp(-c M^2),
\end{align*}
and hence we have 
\begin{align}\label{random sparse bound 2}
\|R_0V_{\alpha_1}R_0V_{\alpha_2}\ldots R_0V_{\alpha_n}\|\leq A M^n\prod_{l=1}^n[\log(1/\delta_{\alpha_l})+\log(1/\delta_{\alpha_{l+1}})]^{\mathcal{O}(1)}\|V_{\alpha_l}\|_{q},
\end{align}
except for $\omega$ in a set of measure at most $\exp(-c M^2)$.

For the second estimate, we observe that, by the triangle inequality and Cauchy--Schwarz,
\begin{align}\label{multilnear sparse 2}
\|[R_0V]^n\|\leq\sum_{\alpha_1,\ldots,\alpha_n}\|R_0|V_{\alpha_1}|^{\frac 12}\| \|V_{\alpha_1}^{\frac 12}R_0|V_{\alpha_2}|^{\frac 12}\|\ldots \|V_{\alpha_{n-1}}^{\frac 12}R_0|V_{\alpha_n}|^{\frac 12}\| \|V_{\alpha_n}^{\frac 12}\|.
\end{align}
Here we are again assuming, as we may, that $V$ is bounded. The operator norm $\|V_{\alpha_n}^{\frac 12}\|$ (equal to the $L^{\infty}$ norm) will be annihilated by taking the $n$-th root at the end and letting $n$ tend to infinity. Let
\begin{align*}
L_{\alpha,\beta}:=\delta_{\alpha,\beta}+\rd(B_{\alpha},B_{\beta}),
\end{align*}
where the balls $B_{\alpha}$ contain the support of $V_{\alpha}$.

\begin{lemma}\label{lemma BSij bound}
For $q\leq (d+1)/2$,
\begin{align}\label{BSij bound}
\|V_{\alpha}^{\frac 12}R_0|V_{\beta}|^{\frac 12}\|\leq C_qL_{\alpha,\beta}^{1-\frac{d+1}{2q}}\|V_{\alpha}\|_{q}^{1/2}\|V_{\beta}\|_{q}^{1/2}.
\end{align}
\end{lemma}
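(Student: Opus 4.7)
The plan is to prove the bound by Riesz--Thorin interpolation applied to the truncated resolvent $\mathbf{1}_{B_\alpha}R_0\mathbf{1}_{B_\beta}$, followed by H\"older's inequality to pass from an operator bound to the bilinear estimate. Two endpoint bounds serve as input. First, the pointwise decay $|R_0(x,y)|\lesssim |x-y|^{-(d-1)/2}$ of the resolvent kernel at $z=(1+\I\eps)^2$, $|\eps|\ll 1$, combined with the fact that $(x,y)\in B_\alpha\times B_\beta$ forces $|x-y|\geq L_{\alpha,\beta}$ when $\alpha\neq\beta$, yields
\[
\|\mathbf{1}_{B_\alpha}R_0\mathbf{1}_{B_\beta}\|_{L^1\to L^\infty}\lesssim L_{\alpha,\beta}^{-(d-1)/2}.
\]
Second, Frank's uniform Sobolev inequality at the Stein--Tomas exponents $p_0=2(d+1)/(d-1)$, $p_0'=2(d+1)/(d+3)$ gives $\|R_0\|_{L^{p_0'}\to L^{p_0}}\lesssim 1$.

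Riesz--Thorin interpolation between these two endpoints produces, for every $q\in[1,(d+1)/2]$ and a unique H\"older-conjugate pair $(\tilde p',\tilde p)$ (with $\tilde p>2>\tilde p'$) satisfying $1/\tilde p'-1/\tilde p=1/q$, the operator bound
\[
\|\mathbf{1}_{B_\alpha}R_0\mathbf{1}_{B_\beta}\|_{L^{\tilde p'}\to L^{\tilde p}}\lesssim L_{\alpha,\beta}^{1-(d+1)/(2q)},
\]
since a short calculation identifies the interpolated exponent $-(1-\theta)(d-1)/2$ with $1-(d+1)/(2q)$ under $\theta=(1-1/q)(d+1)/(d-1)$. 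Two applications of H\"older's inequality then convert this to the bilinear form: multiplication by $V_\alpha^{1/2}\in L^{2q}$ is bounded $L^{\tilde p}\to L^2$ with norm $\|V_\alpha\|_q^{1/2}$, multiplication by $|V_\beta|^{1/2}\in L^{2q}$ is bounded $L^2\to L^{\tilde p'}$ with norm $\|V_\beta\|_q^{1/2}$, and the complex phase of $V_\alpha^{1/2}$ is a unimodular factor that leaves the operator norm unchanged. This delivers the claim in the off-diagonal case $\alpha\neq\beta$.

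The main obstacle is the diagonal case $\alpha=\beta$, where $L_{\alpha,\beta}=1$ and the first endpoint fails because of the integrable-but-unbounded near-diagonal singularity $|x-y|^{-(d-2)}$ of the resolvent kernel (for $d\geq 3$; logarithmic for $d=2$). At the critical exponent $q=(d+1)/2$ the required bound $\|V_\alpha^{1/2}R_0|V_\alpha|^{1/2}\|\lesssim \|V_\alpha\|_{(d+1)/2}$ is precisely Frank's inequality and poses no difficulty. For subcritical $q<(d+1)/2$ one invokes the unit-scale structure of the sparse-decomposition pieces $V_{ijk}$ inherited from the horizontal dyadic slicing of Subsection~\ref{subsect. sparse decomp.}: each $V_{ijk}$ is essentially constant at height $\sim H_i$ on a union of unit-scale cells, which makes $\|V_\alpha\|_{(d+1)/2}\lesssim_q \|V_\alpha\|_q$ and reduces the subcritical diagonal case to Frank's estimate as well. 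Absorbing the resulting constants into $C_q$ completes the argument.
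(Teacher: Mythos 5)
Your argument is correct but takes a genuinely different route from the paper. The paper applies Stein interpolation to the single analytic family $V_\alpha^{\zeta/2}R_0^{\zeta}|V_\beta|^{\zeta/2}$, using as endpoints the trivial $L^2\to L^2$ bound at $\re\zeta=0$ and a Hilbert--Schmidt bound at $\re\zeta=q$ (the latter obtained from the pointwise decay of the complex powers $R_0^\zeta$); this handles the diagonal and off-diagonal cases in one sweep and yields the $L_{\alpha,\beta}^{-\eta}$ factor from the HS endpoint. You instead apply Riesz--Thorin to the truncated resolvent $\mathbf{1}_{B_\alpha}R_0\mathbf{1}_{B_\beta}$ between the $L^1\to L^\infty$ kernel-decay endpoint and the KRS/Stein--Tomas $L^{p_0'}\to L^{p_0}$ endpoint, then bolt on the potentials by H\"older --- a more modular argument that uses Frank's uniform Sobolev inequality as a black box, but forces a case split because the $L^1\to L^\infty$ endpoint fails when $\alpha=\beta$. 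Your exponent arithmetic is correct and the off-diagonal case is fine.

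For the diagonal case your stated justification is slightly off. It is not true that each $V_{ijk}$ is "essentially constant at height $\sim H_i$": on $\supp V_i$ one only has $H_{i+1}\leq |V_i|\leq H_i$, and the ratio $H_i/H_{i+1}$ is not controlled. What actually gives $\|V_\alpha\|_{(d+1)/2}\lesssim\|V_\alpha\|_q$ for $q\leq(d+1)/2$ is the \emph{unit-scale constancy} of $V_\alpha$ (from the mollification/sparsification preceding the decomposition): writing $V_\alpha=\sum_k c_k\mathbf{1}_{Q_k}$ on unit cubes gives $\|V_\alpha\|_{L^p}\asymp\|c\|_{\ell^p}$, and then the inequality is just nesting of $\ell^p$ norms, $\|c\|_{\ell^{(d+1)/2}}\leq\|c\|_{\ell^q}$. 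With this corrected justification your reduction of the subcritical diagonal case to Frank's bound goes through, so the overall proof is sound.
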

\begin{proof}
To prove this, one uses the well known pointwise bound
\begin{align}\label{pointwise bound on complex power of the resolvent}
|R_0^{(a+\I t)}(x-y)|\leq C_1\e^{C_2t^2}|x-y|^{-\frac{d+1}{2}+a}
\end{align}
for $a\in [(d-1)/2,(d+1)/2]$ and $d\geq 2$ (see e.g.\ \cite[(2.5)]{MR3865141}), or the explicit formula for the resolvent kernel in $d=1$.  More precisely, consider the analytic family $V_{\alpha}^{\zeta/2}R_0^{\zeta}|V_{\beta}|^{\zeta/2}$. Then \eqref{pointwise bound on complex power of the resolvent} implies that, for $\re\zeta=q$, the kernel is bounded by
\begin{align*}
|V_{\alpha}(x)^{\zeta/2}R_0^{\zeta}(x-y)|V_{\beta}(y)|^{\zeta/2}|\leq C_1\e^{C_2(\im\zeta)^2} L_{\alpha,\beta}^{-\eta}|V_{\alpha}(x)|^{q}|V_{\beta}(y)|^{q},
\end{align*}
where $\eta=(d+1)/2-q\geq 0$,
leading to the Hilbert--Schmidt bound
\begin{align}\label{Hilbert--Schmidt bound}
\|V_{\alpha}^{\zeta/2}R_0^{\zeta}|V_{\beta}|^{\zeta/2}\|\leq C_{\eta} L_{\alpha,\beta}^{-\eta}\|V_{\alpha}\|_{q}^{q/2}\|V_{\beta}\|_{q}^{q/2}
\end{align}
for some constant $C_{\eta}$ (allowed to change from line to line).
Interpolating this with the trivial bound $\|V_{\alpha}^{\zeta/2}R_0^{\zeta}|V_{\beta}|^{\zeta/2}\|\leq C_1\e^{C_2(\im\zeta)^2}$ for $\re\zeta=0$ yields \eqref{BSij bound}.
\end{proof}
The previous lemma yields the second estimate
\begin{align*}
\|R_0V_{\alpha_1}R_0V_{\alpha_2}\ldots R_0V_{\alpha_n}\|\leq 
AC_{\eta}^{n}\prod_{l=1}^n\|V_{\alpha_l}\|_{q_{\eta}}
L_{\alpha_{l},\alpha_{l+1}}^{-\eta'}
\end{align*}
where $\eta'=\eta/((d+1)/2-\eta)$ and $q_{\eta}=(d+1)/2-\eta$.
Interpolating this with \eqref{random sparse bound 2}, we get, for $0<\theta< 1$, 
\begin{align*}
\|R_0V_{\alpha_1}R_0V_{\alpha_2}\ldots R_0V_{\alpha_n}&\|\leq A(C_{\eta}M)^n
\prod_{l=1}^n[\log(1+R_{i_{l-1}}+R_{i_{l}}+R_{i_{l+1}})]^{\mathcal{O}(1)}
L_{\alpha_l,\alpha_{l+1}}^{-\theta\eta'/2}\\
&\times
\|V_{\alpha_l}\|_{q}^{(1-\theta)}\|V_{\alpha_l}\|_{q_{\eta}}^{\theta}.
\end{align*}
except on an exceptional set of measure at most $\exp(-cM^2)$.
(Here we used $L_{\alpha_l,\alpha_{l+1}}^{-\theta\eta'/2}$ to control
$\rd(B_{\alpha_l},B_{\alpha_{l+1}})$ appearing in $\log(1/\delta_{\alpha_l})$.)
Using that
\begin{align*}
\|V_{\alpha_l}\|_{q}\lesssim H_{i_l}2^{i_l/q}
\end{align*}
for all $q\geq 1$, and summing the resulting estimate first over $k_1$, then continuing up to $k_{n-1}$, yields
\begin{align*}
\sum_{k_1,\ldots,k_{n-1}}\|R_0V_{\alpha_1}R_0V_{\alpha_2}\ldots R_0V_{\alpha_n}&\|\leq A(C_{\eta}M)^n\prod_{l=1}^{n-1}[\log(1+R_{i_{l-1}}+R_{i_{l}}+R_{i_{l+1}})]^{\mathcal{O}(1)}\\
\times H_{i_l}2^{i_l((1-\theta)/q+\theta/q_{\eta})}.
\end{align*}
Here we have used that, for $\alpha_1=(i_1,j_1,k_1)$, $\alpha_2=(i_2,j_2,k_2)$ and $i_1,j_1,i_2,j_2,k_2$ fixed, the sum over $k_1$ is bounded,
\begin{align}\label{supsum}
\sum_{k_1\leq N_{i_1}}\langle \rd(B(x_{k_1},R_{i_1}),B_{\alpha_2})\rangle^{-\theta\eta'/2}=\mathcal{O}_{\gamma_0}(1),
\end{align}
uniformly in $i_1,j_1,i_2,j_2,k_2$, provided $\theta\eta'\gamma_0/2>1$ and $\gamma\geq \gamma_0$. We will momentarily fix $\eta,\theta$, and then choose $\gamma_0=4/(\eta'\theta)$.
Note that, even though the balls in \eqref{supsum} may belong to different sparse families, we have that
\begin{align*}
\rd(B(x_{k_1},R_{i_1}),B_{\alpha_2})\geq \frac{1}{2}(N_{i_1}R_{i_1})^{\gamma}
\end{align*}
for all but at most one $k_1$. Indeed, suppose for contradiction that this does not hold for two distinct $k_1,k_1'$. Then by the triangle inequality,
\begin{align*}
\rd(B(x_{k_1},R_{i_1}),B(x_{k_1'},R_{i_1}))<(N_{i_1}R_{i_1})^{\gamma},
\end{align*}
which contradicts the sparsity of the collection $\{B(x_{k_1},R_{i_1})\}$.

Note that the last summation over $k_n$ produces a $\mathcal{O}(2^{i_n})$ factor, but this can be absorbed into the constant $A$ after summing over $i_n$ and hence we do dot display it. 

Summing over $j_1,\ldots,j_{n}$ yields
\begin{align*}
&\sum_{j_1,\ldots,j_n}\sum_{k_1,\ldots,k_n}\|R_0V_{\alpha_1}R_0V_{\alpha_2}\ldots R_0V_{\alpha_n}\|
\\&\leq  A(C_{\eta}M)^n\prod_{l=1}^n[\log(1+R_{i_{l-1}}+R_{i_{l}}+R_{i_{l+1}})]^{\mathcal{O}(1)}
 K_{i_l}H_{i_l}2^{i_l((1-\theta)/q+\theta/q_{\eta})},
\end{align*} 
where $K_i$ is as in \eqref{KiNiRi}. Finally, summing over $i_1,\ldots,i_{n}$ yields
\begin{align*}
\|[R_0V]^n\|&\leq A(C_{\eta}M K)^n(\sum_{i\in\Z_+}\langle i\rangle^{\mathcal{O}(1)}H_{i}2^{i((1-\theta)/q+\theta/q_{\eta}+1/K)})^n.
\end{align*} 
Once $K$ is fixed, we choose $\eta,\theta$ such that $0<\theta(1/q_{\eta}-1/q)<1/K$. Then
\begin{align}\label{limK}
\spr(R_0V_{\omega})=\lim_{n\to\infty}\|[R_0V]^n\|^{1/n}&\leq C_{\eta,K}M\sum_{i\in\Z_+}H_{i}2^{i/q}2^{3i/K},
\end{align} 
where we used that $\langle i\rangle^{\mathcal{O}(1)}\leq C_K 2^{i/K}$.

\subsection{Completion of the proof of Theorem \ref{thm. 3}}\label{subsection Completion of the proof of Theorem 3}
We use \eqref{limK} for $\tilde{q}>q$ instead of $q$, that is we now regard $(d+1)/2<q<d+1$ as given and choose $\tilde{q}< d+1$ and $K$ such that $1/\tilde{q}+3/K<1/q$. Then
\begin{align*}
\spr(R_0V_{\omega})\lesssim \sup_{i\in\Z^+}H_i2^{i/q} \sum_{i\in\Z_+}2^{i(1/\tilde{q}-1/q+3/K)}\leq  C_{\tilde{q},K}M \|V\|_{L^{q,\infty}}.
\end{align*}
Clearly, the choice of $\tilde{q}$ depends only on $q,K,d$ and $\|V\|_{L^{q}}\leq \|V\|_{L^{q,\infty}}$.
We have thus proved the main estimate of this section, which also completes the proof of Theorem \ref{thm. 3}.

\begin{lemma}\label{lemma completion thm 3}
Let $q<d+1$. Then there exists $c,M_0$ such that for all $M\geq M_0$, $z=(\lambda+\I\epsilon)^2$, $\lambda\asymp 1$, $|\epsilon|\ll 1$ and $V\in L^q(\R^d)$,
\begin{align*}
\spr(R_0(z)V_{\omega})\leq M\|V\|_q 
\end{align*}
except for $\omega$ in a set of measure at most
$\exp(-cM^2)$.
\end{lemma}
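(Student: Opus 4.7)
The plan is to assemble the pieces already developed in Sections~\ref{section Born series} and~\ref{section Local to global arguments} into a single spectral-radius estimate. The main body of the argument is carried out in Subsections~\ref{subsect. sparse decomp.}--\ref{subsection Completion of the proof of Theorem 3}; the statement of Lemma~\ref{lemma completion thm 3} is essentially a clean summary, so the job is to verify that the constants line up and that the union bound over exceptional $\omega$-sets produces the claimed $\exp(-cM^2)$. First I would scale to $\lambda = 1$, so that $z = (1+\I\epsilon)^2$ with $|\epsilon| \ll 1$, and reduce to showing a bound on $\spr(R_0(z)V_\omega)$ in terms of $\|V\|_{L^{q,\infty}} \geq \|V\|_{L^q}$, which is stronger than what is claimed.

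The core is the two-scale decomposition of Subsection~\ref{subsect. sparse decomp.}: a horizontal dyadic decomposition $V = \sum_i V_i$ based on the level sets $|V| \asymp H_i$, with $\|H_i 2^{i/q}\|_{\ell^q_i} \asymp \|V\|_{L^q}$, followed by writing each $V_i$ as a sum of $\gamma$-sparse families $V_i = \sum_{j=1}^{K_i} \sum_{k=1}^{N_i} V_{ijk}$ with parameters as in \eqref{KiNiRi}. Expanding the Born series multilinearly as in \eqref{Born series alpha}, each summand $R_0 V_{\alpha_1} \cdots R_0 V_{\alpha_n}$ is estimated in two complementary ways: (a) the random elementary-operator bound \eqref{random sparse bound}, obtained from Lemma~\ref{lemma local resolvent bound} by inserting smoothed resolvents localized to pairs of balls (using the analogue of \eqref{smoothing identity} for balls with different centers), and (b) a deterministic Hilbert--Schmidt/interpolation bound, i.e.\ Lemma~\ref{lemma BSij bound} applied in the form $\|V_\alpha^{1/2} R_0 |V_\beta|^{1/2}\| \lesssim L_{\alpha,\beta}^{1-(d+1)/(2q)}\|V_\alpha\|_q^{1/2}\|V_\beta\|_q^{1/2}$, which provides decay in the separation $L_{\alpha,\beta}$ of the supports.

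The crucial step is to interpolate (a) with (b) to produce a product with a genuine power $L_{\alpha_l,\alpha_{l+1}}^{-\theta\eta'/2}$ of the inter-ball distances, which after enlarging $\gamma$ so that $\theta\eta'\gamma/2 > 1$ makes the $k_l$-sums convergent via sparsity (cf.\ \eqref{supsum}). After summing first over $k_l$, then over $j_l$ (which costs only $K_{i_l} = \mathcal{O}(K 2^{i_l/K})$), and finally taking the $n$-th root and $n \to \infty$ (so that $C_{\eta,K}^n, A^n$ are inconsequential), the $i$-sum collapses to $\sum_i H_i 2^{i/q}2^{3i/K}$ as in \eqref{limK}. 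Choosing $\tilde q < d+1$ and $K$ so that $1/\tilde q + 3/K < 1/q$ bounds this by $C_{\tilde q, K} M \sup_i H_i 2^{i/\tilde q} \lesssim M\|V\|_{L^{q,\infty}}$.

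The probabilistic side is handled by a union bound: the estimate \eqref{random sparse bound} fails on a set of measure at most $\exp(-c' M_1^2)$ for each triple $(\alpha_1,\alpha_2,\alpha_3)$, and taking $M_1 = M [\log(1/\delta_{\alpha_1} + 1/\delta_{\alpha_2})]^{\mathcal{O}(1)}$ absorbs the logarithmic losses while still allowing the sum $\sum_{\alpha_1,\alpha_2,\alpha_3} \exp(-c' M_1^2)$ to be bounded by $\exp(-c M^2)$, because $N_i K_i \leq 2^{i(1+1/K)}\cdot K$ grows polynomially in $R_i$ while the exponent of $M_1^2$ is poly-logarithmic in $R_i$. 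The main obstacle I expect is exactly this bookkeeping: choosing $\gamma$, $\eta$, $\theta$, $K$ and $\tilde q$ in the correct order so that (i) the interpolation exponent $\theta\eta'/2$ beats the sparsity polynomial, (ii) the Lorentz-space $i$-sum converges, and (iii) the union bound still yields $\exp(-cM^2)$; each of these steps is essentially done in the excerpt, so the proof of the lemma amounts to collecting \eqref{limK} with the choice $1/\tilde q + 3/K < 1/q$ and observing $\|V\|_{L^q} \leq \|V\|_{L^{q,\infty}}$ is not needed in the reverse direction -- the claim with $\|V\|_q$ is weaker than what was proved.
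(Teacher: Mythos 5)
Your proposal follows the paper's own proof faithfully: sparse decomposition into $V_{ijk}$, multilinear Born expansion, interpolation between the randomized bound \eqref{random sparse bound} and the deterministic Hilbert--Schmidt bound of Lemma~\ref{lemma BSij bound} to gain decay in $L_{\alpha,\alpha'}$, summation via sparsity, and a union bound with $M_1$ chosen to absorb the logarithmic losses. One small slip: the Lorentz embedding goes the other way, $\|V\|_{L^{q,\infty}}\lesssim\|V\|_{L^q}$ (and the final supremum should be $\sup_i H_i 2^{i/q}$, not $2^{i/\tilde q}$), but the logic is unaffected since the weak-$L^q$ bound is still the stronger statement.
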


\subsection{Global extension bound}
For potential future reference we include a similar bound to that proved in Lemma \ref{lemma completion thm 3}, but for the norms of the elementary operators \eqref{elementary operators} instead of the spectral radius of $R_0V$.

\begin{proposition}\label{prop. Global extension bound}
Let $q< d+1$. Then there exist constants $M_0,c$ such for any $M\geq M_0$, $\lambda,\lambda'\asymp 1$ and $V\in L^q(\R^d)$,
\begin{align*}
\|\mathcal{E}_{\lambda}^*V_{\omega}\mathcal{E}_{\lambda'}\|\leq M\langle h\rangle^{d/2}(\log\langle h\rangle)^2\|V\|_{L^q},
\end{align*}
except for $\omega$ in a set of measure at most
$\exp(-cM^2)$.
\end{proposition}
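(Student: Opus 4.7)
The plan is to closely follow the sparse decomposition strategy of Subsections~\ref{subsect. sparse decomp.}--\ref{subsection Spectral radius estimates}, adapted to the single linear operator $\mathcal{E}_\lambda^*V_\omega\mathcal{E}_{\lambda'}$ in place of the multilinear Born series $[R_0V]^n$. After rescaling as in Section~\ref{section Born series}, one may assume $\lambda,\lambda'\asymp 1$. I would first apply the sparse decomposition $V=\sum_{i,j,k}V_{ijk}$ of~\eqref{sparse decomp.} and group the pieces into $\tilde V_{ij}=\sum_k V_{ijk}$, one for each dyadic level $i\in\Z_+$ and sparse family index $j\in\{1,\ldots,K_i\}$. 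Since the collection $\{B(x_k,R_i)\}_{k=1}^{N_i}$ forming the support of $\tilde V_{ij}$ is $\gamma$-sparse, the union fits inside a single ball of radius $R_{ij}=\mathcal{O}((N_iR_i)^{\gamma+1/d})=\mathcal{O}(2^{i\gamma^{K+1}})$ when $K,\gamma$ are large.

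Next, for each fixed pair $(i,j)$, I would apply Lemma~\ref{lemma local extension bound} (together with Remark~\ref{remark local uniformity}) to the deterministic potential $\tilde V_{ij}$, supported in a ball of radius $R_{ij}$, and combine this with the tail bound of Lemma~\ref{lemma tail bound}. This yields
\begin{align*}
\|\mathcal{E}_\lambda^*\tilde V_{ij,\omega}\mathcal{E}_{\lambda'}\|\leq M_{ij}\,\langle h\rangle^{d/2}\bigl((\log\langle h\rangle)^2\,(i\gamma^{K+1})^{1/2}+(i\gamma^{K+1})^{5/2}\bigr)\|\tilde V_{ij}\|_{L^{d+1}}
\end{align*}
except on a set of measure at most $\exp(-cM_{ij}^2)$. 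Choosing $M_{ij}=M(1+i+\log K_i)^{O(1)}$, as in the proof of Theorem~\ref{thm. 2}, absorbs the polynomial factors into $M_{ij}$, so that a union bound over $(i,j)$ still leaves an exceptional event of probability at most $\exp(-cM^2)$. The triangle inequality then gives $\|\mathcal{E}_\lambda^*V_\omega\mathcal{E}_{\lambda'}\|\leq \sum_{i,j}\|\mathcal{E}_\lambda^*\tilde V_{ij,\omega}\mathcal{E}_{\lambda'}\|$.

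To sum these bounds, I would exploit the disjoint supports of the $\{V_{ijk}\}$: using $\sum_j\|\tilde V_{ij}\|_{L^{d+1}}^{d+1}=\|V_i\|_{L^{d+1}}^{d+1}\leq H_i^{d+1}2^i$ together with H\"older in $j$ (recall $K_i=\mathcal{O}(K2^{i/K})$), one obtains
\begin{align*}
\sum_j\|\tilde V_{ij}\|_{L^{d+1}}\lesssim K^{d/(d+1)}H_i\,2^{i(1+d/K)/(d+1)}.
\end{align*}
Substituting this and summing over $i$ reduces the problem to bounding $\sum_i i^{O(1)}H_i\,2^{i(1+d/K)/(d+1)}$. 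Since $\sup_i H_i\,2^{i/q}\asymp\|V\|_{L^{q,\infty}}\leq \|V\|_{L^q}$, this geometric sum converges whenever $(1+d/K)/(d+1)<1/q$, which holds for any prescribed $q<d+1$ by choosing $K$ sufficiently large.

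The main obstacle will be the careful balancing of the parameters $K$ and $\gamma$: they must be large enough that the geometric sum over $i$ converges for the given $q$, while the union bound over $(i,j)$ still yields an exceptional event of probability $\exp(-cM^2)$. These choices parallel those in Subsection~\ref{subsection Completion of the proof of Theorem 3}, but the argument is simpler here since we deal with a single operator rather than the spectral radius of a product, bypassing the deterministic Hilbert--Schmidt interpolation of Lemma~\ref{lemma BSij bound}.
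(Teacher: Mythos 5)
Your proposal contains a genuine gap at its very first structural step. You claim that, since the collection $\{B(x_k,R_i)\}_{k=1}^{N_i}$ supporting $\tilde V_{ij}$ is $\gamma$-sparse, ``the union fits inside a single ball of radius $R_{ij}=\mathcal{O}((N_iR_i)^{\gamma+1/d})$''. This is not what $\gamma$-sparsity means: by Definition (cf.\ the paper's definition following \eqref{sparse decomp.}), the centers $x_k$ are \emph{at least} $(R_iN_i)^{\gamma}$ separated — a \emph{lower} bound on the spacing, not an upper bound on the diameter of the union. There is nothing in Tao's sparse decomposition \cite[Lemma 3.3]{MR1666558} preventing the $N_i$ balls of a single sparse family from being spread out over a region of arbitrarily large (e.g.\ doubly exponential in $i$) diameter. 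Indeed, the whole purpose of the $\gamma$-sparse framework is precisely to handle potentials whose support cannot be confined to a polynomial-sized ball.

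Consequently, your plan of applying Lemma \ref{lemma local extension bound} to each $\tilde V_{ij}$ breaks down: the radius $R$ entering that lemma is the diameter of $\supp\tilde V_{ij}$, which you cannot bound by $2^{i\gamma^{K+1}}$, so the $(\log R)^{5/2}$ loss is uncontrolled and cannot be absorbed in the geometric sum over $i$. Your final paragraph even identifies the point you are skipping: you say you can ``bypass the deterministic Hilbert--Schmidt interpolation of Lemma \ref{lemma BSij bound}''. In fact that interpolation (in the form of the complex-interpolation bound on $|V_i|^{(1-s)/2}U_s|V_j|^{(1-s)/2}$ in the proof of Lemma \ref{lemma sparse extension bound}), combined with the Cotlar--Stein almost-orthogonality of the operators $T_k=\mathcal E^*V_{ijk}\mathcal E$ across the widely separated balls, is exactly the mechanism that replaces the uncontrollable $\log R$ loss by the tame $(\log N_i)^{1/2}$ factor and a $\langle R_i\rangle^{\epsilon}$ tail. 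This is the content of Lemma \ref{lemma sparse extension bound}, and it cannot be bypassed in the way you suggest. The rest of your bookkeeping (choice of $M_{ij}$, union bound, H\"older over $j$, geometric sum in $i$ via $\|V\|_{L^{q,\infty}}$) is fine and parallels the paper's Subsection \ref{subsection Completion of the proof of Theorem 3}, but it rests on the false bounded-support premise.
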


In the following, we use the notation $\|V\|_{\ell^{\infty}L^{q}}=\sup_{j\leq N}\|V\|_{L^{q}(B(x_j,R))}$ and $V_j=V\mathbf{1}_{(B(x_j,R))}$, whenever $V$ is supported on a $\gamma$-sparse collection $\{B(x_j,R)\}_{j=1}^N$.
We will show that Lemma \ref{prop. Global extension bound} follows from the subsequent lemma.

\begin{lemma}\label{lemma sparse extension bound}
There exist constants $M_0,c,\gamma_0>0$ such that the following holds.
For any $R>0$, $0<h<R$, $\lambda,\lambda'\asymp 1$, $q< d+1$, $N\in\N$, $\gamma\geq \gamma_0$, for any $V\in L^q(\R^d)$ supported on a $\gamma$-sparse collection $\{B(x_j,R)\}_{j=1}^N$, and for any $M\geq M_0$, $\epsilon>0$, 
\begin{align*}
\|\mathcal{E}_{\lambda}^*V_{\omega}\mathcal{E}_{\lambda'}\|\leq C_{q,\epsilon} (M^2+\log N)^{1/2}\langle h\rangle^{d/2}(\log\langle h\rangle)^2 \langle R\rangle^{\epsilon}\|V\|_{\ell^{\infty}L^q},
\end{align*}
except for $\omega$ in a set of measure at most
$\exp(-cM^2)$.
\end{lemma}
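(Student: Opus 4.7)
My plan is to combine the local bound of Lemma~\ref{lemma local extension bound} applied to each ball with the Cotlar--Stein almost-orthogonality lemma, using sparseness to control the cross terms.

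Decompose $V_{\omega} = \sum_{j=1}^{N} V_{\omega,j}$ with $V_{\omega,j} := V_{\omega}\mathbf{1}_{B(x_j,R)}$, and set $T_j := \mathcal{E}_{\lambda}^{*} V_{\omega,j}\mathcal{E}_{\lambda'}$, so that $T = \sum_j T_j$. For $\gamma\geq 1$ the balls $B(x_j,R)$ are pairwise disjoint, so each $V_{\omega,j}$ depends on a disjoint subset of the $\omega_k$, making the $T_j$ mutually independent. Applying Lemma~\ref{lemma local extension bound} together with the tail bound Lemma~\ref{lemma tail bound} on each ball, outside a set of probability $\leq \exp(-cM_1^{2})$ we obtain
\[
\|T_j\| \leq M_1 \langle h\rangle^{d/2} (\log\langle R\rangle)^{1/2} (\log\langle h\rangle+\log\langle R\rangle)^{2} \|V\|_{\ell^{\infty}L^{q}}.
\]
A union bound over $j=1,\dots,N$ together with the choice $M_1 = C(M^{2}+\log N)^{1/2}$ makes this valid for all $j$ simultaneously outside a set of measure $\leq \exp(-c'M^{2})$, producing the factor $(M^{2}+\log N)^{1/2}$ in the desired estimate.

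To pass from $\max_j \|T_j\|$ to $\|T\|$, I invoke Cotlar--Stein with weights $\alpha(i,j) := \max(\|T_i T_j^{*}\|^{1/2},\|T_i^{*}T_j\|^{1/2})$. The kernel of $T_i T_j^{*}$ on $M_{\lambda}\times M_{\lambda}$ is an integral over $(x,y)\in B(x_i,R)\times B(x_j,R)$ containing the factor $(\rd\sigma_{\lambda'})^{\vee}(x-y)$, and likewise for $T_i^{*}T_j$ with $(\rd\sigma_{\lambda})^{\vee}$ in its place. For $i\neq j$, sparseness gives $|x-y|\gtrsim (RN)^{\gamma}$, and the standard decay $|(\rd\sigma_{\lambda'})^{\vee}(z)|\lesssim |z|^{-(d-1)/2}$ combined with a crude kernel/H\"older bound yields
\[
\alpha(i,j) \lesssim (RN)^{-\gamma(d-1)/4}\, R^{d(1-1/q)}\, \|V\|_{\ell^{\infty}L^{q}}\qquad (i\neq j).
\]
Consequently $\sup_j \sum_i \alpha(i,j) \leq \max_j \|T_j\| + N\cdot(RN)^{-\gamma(d-1)/4} R^{d(1-1/q)}\|V\|_{\ell^{\infty}L^{q}}$; choosing $\gamma_0 = \gamma_0(d,q,\epsilon)$ large enough bounds the second summand by $\langle R\rangle^{\epsilon}\|V\|_{\ell^{\infty}L^{q}}$, and Cotlar--Stein delivers $\|T\| \leq \max_j \|T_j\| + C_{q,\epsilon}\langle R\rangle^{\epsilon}\|V\|_{\ell^{\infty}L^{q}}$. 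Absorbing the remaining polylogarithmic factors $(\log\langle R\rangle)^{\mathcal{O}(1)}$ into $\langle R\rangle^{\epsilon}$ completes the proof.

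The main technical obstacle is the Cotlar--Stein verification: the polynomial decay gained from sparseness must overwhelm both the $N$ summands and the H\"older loss $R^{d(1-1/q)}$, up to an $\langle R\rangle^{\epsilon}$ slack, and this determines $\gamma_0$ quantitatively in terms of $d$, $q$, and $\epsilon$. The case $d=1$, where $(\rd\sigma_{\lambda'})^{\vee}$ does not decay, would require a separate treatment relying on the explicit one-dimensional resolvent kernel.
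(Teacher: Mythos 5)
Your proposal is correct for $d\geq 2$ (and you correctly flag that $d=1$ would need a separate treatment), but the Cotlar--Stein step is carried out by a genuinely different and more elementary route than the paper's. You bound the off-diagonal weights $\alpha(i,j)$, $i\neq j$, directly via the pointwise decay $|(\rd\sigma_{\lambda'})^{\vee}(z)|\lesssim |z|^{-(d-1)/2}$, the $(RN)^{\gamma}$-separation from sparseness, and a crude $L^{1}\to L^{\infty}$ kernel/H\"older estimate; this costs a factor $R^{d(1-1/q)}$ but gains the strong decay $(RN)^{-\gamma(d-1)/4}$, which for $\gamma$ large absorbs both the $N$ cross terms and the H\"older loss. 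The paper instead embeds $\mathcal{E}\mathcal{E}^{*}$ into the analytic Stein--Tomas family $U_{s}$, runs a Stein interpolation (as in the proof of Lemma~\ref{lemma BSij bound}) to obtain a \emph{deterministic} bound $\|T_iT_j^{*}\|^{1/2}+\|T_i^{*}T_j\|^{1/2}\lesssim L_{ij}^{-\eta'}\|V\|_{\ell^{\infty}L^{q_{\eta}}}$ with $q_{\eta}$ slightly below $(d+1)/2$, then \emph{interpolates this with the probabilistic bound} to inject a small power of decay $L_{ij}^{-\theta\eta'/2}$ before invoking Cotlar--Stein via \eqref{supsum}. The paper's route produces a milder H\"older loss $R^{\theta d/s_{\eta}}$ (with $\theta$ small) and requires a $\gamma_0$ that blows up as $\epsilon\to 0$; your route produces a cruder $R^{d(1-1/q)}$ loss but gets to choose $\gamma_0$ independent of $q$ and $\epsilon$ (e.g.\ $\gamma_0=4d/(d-1)$ suffices), so claiming $\gamma_0=\gamma_0(d,q,\epsilon)$ is unnecessarily pessimistic. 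Both proofs end by absorbing polylogarithmic factors in $R$ into $\langle R\rangle^{\epsilon}$. Two minor remarks: the mutual independence of the $T_j$ that you note is never used (the union bound does not require it); and the $(RN)^{\gamma}$-separation of centers only translates into $|x-y|\gtrsim (RN)^{\gamma}$ for points $x\in B(x_i,R)$, $y\in B(x_j,R)$ after subtracting $2R$, which is harmless once $\gamma\geq 2$ and $N\geq 2$, but should be said.
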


\begin{proof}
We may assume without loss of generality that $\lambda,\lambda'=1$ and $R>2$. We omit the subscripts in $\mathcal{E}^*_{\lambda}$, $\mathcal{E}_{\lambda'}$ as well as the (obvious) $h$-dependence (i.e.\ we set $h=1$). Consider the operators
\begin{align*}
T_j=\mathcal{E}^*V_j\mathcal{E},\quad 1\leq j\leq N,
\end{align*}
where we omitted $\omega$ from the notation. Then
\begin{align*}
T_iT_j^*=\mathcal{E}^*V_i\mathcal{E}\mathcal{E}^*\overline{V_j}\mathcal{E},\quad T_i^*T_j=\mathcal{E}^*\overline{V_i}\mathcal{E}\mathcal{E}^*V_j\mathcal{E}.
\end{align*}
As in the endpoint proof of the Stein--Tomas theorem (see e.g.\ \cite[IX.1.2.2]{MR1232192}) we embed $\mathcal{E}\mathcal{E}^*$ into an analytic family of operators $U_s$ in the strip $(1-d)/2\leq \re s\leq 1$, satisfying 
\begin{align*}
\|U_s\|_{L^2\to L^2}&\lesssim 1,\quad \re s=1,\\
\|U_s\|_{L^1\to L^{\infty}}&\lesssim 1,\quad \re s=(1-d)/2,
\end{align*}
and $U_0=\mathcal{E}\mathcal{E}^*$. Similarly as in the proof of Lemma \ref{lemma BSij bound} we then use complex interpolation on the family $|V_i|^{\frac{1-s}{2}}U_s|V_j|^{\frac{1-s}{2}}$ to obtain the bound
\begin{align*}
\||V_i|^{\frac{1}{2}}\mathcal{E}\mathcal{E}^*|V_j|^{\frac{1}{2}}\|\lesssim L_{ij}^{-\eta'}\|V_i\|_{L^{q_{\eta}}}^{\frac{1}{2}}\|V_j\|_{L^{q_{\eta}}}^{\frac{1}{2}}
\end{align*}
for $\eta'=\eta/q_{\eta}$, $q_{\eta}=(d+1)/2-\eta$ and $0<\eta\ll 1$. By the Stein--Tomas and Hölder's inequality, we also have 
\begin{align*}
\|\mathcal{E}^*V_i^{\frac{1}{2}}\|\lesssim \|V_j\|_{L^{q_{\eta}}}^{\frac{1}{2}}, \quad\|V_i^{\frac{1}{2}}\mathcal{E}\|\lesssim \|V_j\|_{L^{q_{\eta}}}^{\frac{1}{2}}.
\end{align*}
Combining the last two displayed formulas yields the deterministic bound
\begin{align*}
\|T_iT_j^*\|^{\frac{1}{2}}+\|T_i^*T_j\|^{\frac{1}{2}}\lesssim L_{ij}^{-\eta'}\|V\|_{\ell^{\infty}L^{q_{\eta}}}
\end{align*}
for all $i,j\leq N$.
On the other hand, the bound of Lemma \ref{lemma local extension bound} (and changing variables $2q\to q)$ yields
\begin{align*}
\|T_iT_j^*\|^{\frac{1}{2}}+\|T_i^*T_j\|^{\frac{1}{2}}\leq  M_1(\log R)^{5/2}\|V\|_{\ell^{\infty}L^{q}}
\end{align*}
for all $i,j\leq N$, and for all $\omega$ except for an exceptional set of measure at most $N\exp(-cM_1^2)$. Interpolating the previous two estimates as in the proof of Lemma~\ref{lemma completion thm 3}, we get
by the Cotlar--Stein lemma and \eqref{supsum},
\begin{align*}
\|\mathcal{E}^*V\mathcal{E}\|\leq C_{\eta,\gamma_0}[(\log R)^{5/2}\|V\|_{\ell^{\infty}L^{q}}]^{1-\theta}\|V\|_{\ell^{\infty}L^{q_{\eta}}}^{\theta}
\end{align*}
for any $\theta\in (0,1)$ and for all $\omega$ except for an exceptional set, provided $\theta\eta'\gamma_0/2>1$ and $\gamma\geq \gamma_0$.
Finally, we use Hölder's inequality 
\begin{align*}
\|V\|_{\ell^{\infty}L^{q_{\eta}}}
\lesssim R^{d/s_{\eta}}\|V\|_{\ell^{\infty}L^{q}},\quad \frac{1}{q_{\eta}}=\frac{1}{s_{\eta}}+\frac{1}{q},
\end{align*}
to convert the previous estimate to
\begin{align}
\|\mathcal{E}^*V\mathcal{E}\|\leq C_{\eta,\gamma_0}[\log R]^{5(1-\theta)/2}R^{\theta d/s_{\eta}}\|V\|_{\ell^{\infty}L^{q}}.
\end{align}

We now fix $0<\eta\ll 1$ (small, but independent of $\epsilon$) and choose $\theta\in (0,1)$ such that
\begin{align*}
[\log R]^{5(1-\theta)/2}R^{\theta d/s}\leq R^{\epsilon}.
\end{align*}
Moreover, we choose $M_1=(M^2+c^{-1}\log N)^{1/2}$, which ensures that the exceptional set has measure at most $\exp(-cM^2)$.
Then the claim holds with the choice $\gamma_0=4/(\eta'\theta)$. 
The remainder of the proof is the same as that of Lemma~\ref{lemma completion thm 3}.
\end{proof}

\begin{proof}[Proof of Proposition \ref{prop. Global extension bound}]
We again use the sparse decomposition of Subsection \ref{subsect. sparse decomp.} and recall the bounds \eqref{KiNiRi} on $K_i,N_i,R_i$. As before, we also set $\lambda,\lambda',h=1$. Lemma \ref{lemma sparse extension bound} yields the estimate
\begin{align*}
\|\mathcal{E}^*V_{ij}\mathcal{E}\|\leq C_{q,\epsilon}(M_i^2+\log N_i)^{1/2}R_i^{\epsilon}\|V_{ij}\|_q 
\end{align*}
for all $q<d+1$, uniformly in $i,j$ and for $\omega$ outside of a set of measure $\exp(-cM_i^2)$. Here we are assuming, as we may, that $M_i,N_i,R_i>2$, say. We may choose $M_i$ freely, and we take $M_i=2M\langle i\rangle^{\delta}$, with $\delta>0$. Summing over $j$ yields, by the triangle inequality,
\begin{align*}
\|\mathcal{E}^*V_{i}\mathcal{E}\|\leq C_{q,\epsilon}K_i(M_i^2+\log N_i)^{1/2}R_i^{\epsilon}\|V_{i}\|_q. 
\end{align*}
Summing over $i$,
\begin{align*}
\|\mathcal{E}^*V_{i}\mathcal{E}\|\leq C_{q,\epsilon,K}\sum_{i\in\Z_+}
H_i2^{i(1/q+2/K+\epsilon\gamma^K)}.
\end{align*}
Here we also used \eqref{KiNiRi}, $\|V_{i}\|_q\lesssim H_i2^{i/q}$ and 
$(M^2+\log N_i)^{1/2}\leq C_{K} M 2^{i/K}$.
We again apply this bound for $\tilde{q}>q$ instead of $q$, this time with $\tilde{q}< d+1$ and $K,\epsilon$ such that $1/\tilde{q}+2/K+\epsilon\gamma^K<1/q$. Then the claimed bound again follows by summing a geometric series. The union bound yields that this bound holds outside an exceptional set of measure at most 
\begin{align*}
\sum_{i,j}\exp(-c'M_i^2)\leq \sum_i K_i\exp(-c'M_i^2)\leq \exp(-cM^2),
\end{align*} 
due to the choice of $M_i$.
\end{proof}

\appendix
\section{Geometric series estimate}
\begin{lemma}\label{lemma Geometric series estimate}
Let $A>0$. Then we have
\begin{align*}
\sum_{k,k'\in\Z_+}\min(2^{-k-k'},A)\lesssim \begin{cases}
A(1+(\log A)^2)\quad&\mbox{if } A<1,\\
1\quad&\mbox{if } A\geq 1.
\end{cases}
\end{align*}
\end{lemma}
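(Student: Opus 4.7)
The plan is to split the sum according to whether $2^{-k-k'}$ is smaller or larger than $A$, then estimate the two pieces by a geometric series and a polynomial-in-$\log$ count, respectively.

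First I would dispose of the easy case $A\geq 1$. Here $\min(2^{-k-k'},A)\leq 2^{-k-k'}$, so
\begin{equation*}
\sum_{k,k'\in\Z_+}\min(2^{-k-k'},A)\leq \Bigl(\sum_{k\in\Z_+}2^{-k}\Bigr)^2\lesssim 1,
\end{equation*}
which gives the claim.

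For $A<1$, set $N=\lceil\log_2(1/A)\rceil$, so that $2^{-N}\leq A<2^{-N+1}$. Then for $(k,k')\in\Z_+^2$ one has $2^{-k-k'}\leq A$ iff $k+k'\geq N$. I would accordingly split
\begin{equation*}
\sum_{k,k'\in\Z_+}\min(2^{-k-k'},A)=\sum_{k+k'\geq N}2^{-k-k'}+A\cdot\#\{(k,k')\in\Z_+^2:k+k'<N\}.
\end{equation*}
For the first sum, since the number of $(k,k')\in\Z_+^2$ with $k+k'=n$ is $\mathcal{O}(n)$, summing the resulting geometric series yields
\begin{equation*}
\sum_{k+k'\geq N}2^{-k-k'}\lesssim\sum_{n\geq N}n\,2^{-n}\lesssim N\,2^{-N}\lesssim A\log(1/A).
\end{equation*}
For the second sum, the number of pairs with $k+k'<N$ is $\mathcal{O}(N^2)$, so this piece is at most $\mathcal{O}(AN^2)=\mathcal{O}(A(\log A)^2)$. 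Adding the two estimates gives the bound $A(1+(\log A)^2)$ claimed.

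There is no real obstacle; the proof is just careful bookkeeping of a geometric series against a lattice-point count. The only thing to watch is the factor of $n$ (not $1$) from the diagonal count when summing $\sum 2^{-n}$, which is what produces the single $\log(1/A)$ in the first piece; this is already absorbed by the $(\log A)^2$ term on the right hand side.
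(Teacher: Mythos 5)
Your proof is correct and takes essentially the same approach as the paper's: split the sum at the threshold $2^{-k-k'}\lessgtr A$, bound one piece by a geometric series and the other by a lattice-point count. One small caution: the intermediate steps $N2^{-N}\lesssim A\log(1/A)$ and $AN^2\lesssim A(\log A)^2$ both degenerate as $A\to 1^{-}$ (there $N=1$ while $\log(1/A)\to 0$); keeping the $+1$ from $N\leq 1+\log_2(1/A)$ gives the safe bounds $N2^{-N}\lesssim A(1+\log(1/A))$ and $AN^2\lesssim A(1+(\log A)^2)$, which still yield the claimed estimate since $\log(1/A)\leq 1+(\log A)^2$.
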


\begin{proof}
The case $A\geq 1$ is trivial. Assume $A<1$. We split the double sum into the obvious regions
$\Sigma_1=\{(k,k'):2^{-k-k'}\leq A\}$ and $\Sigma_2=\{(k,k'):2^{-k-k'}> A\}$. Then we have
\begin{align*}
\sum_{(k,k')\in\Sigma_1}\min(2^{-k-k'},A)=\sum_{k'\in\Z_+}2^{-k'}\sum_{k:2^{-k}\leq 2^{k'}A}2^{-k}\lesssim \sum_{k'\in\Z_+}2^{-k'}\min(1,2^{k'}A).
\end{align*}
Splitting the last sum again in the obvious way yields
\begin{align*}
\sum_{(k,k')\in\Sigma_1}\min(2^{-k-k'},A)\lesssim A(1+\log A^{-1}).
\end{align*}
Turning to the contribution of $\Sigma_2$, we have 
\begin{align*}
\sum_{(k,k')\in\Sigma_2}\min(2^{-k-k'},A)&=A\sum_{k'\in\Z_+}|\{k\in\Z_+:\,2^{-k}>2^{k'}A\}|\\
&\leq A\sum_{k'\in\Z_+}(\log A-k')_+\leq A(\log A)^2.
\end{align*}
The claim follows since $\log A^{-1}\leq 1+(\log A)^2$. 
\end{proof}

We now provide details of the calculation at the end of the proof of Lemma \ref{lemma local extension bound}. Without loss of generality we may assume that $\|V\|_{2q}=1$. By Lemma \ref{lemma Dudley}, we have

\begin{align*}
\sum_{k,k'\in \Z_+}\int \mathbf{E}\max_{\mathcal{F}_k\times \mathcal{F}_{k'}}|X_{\xi,\xi'}|\rd y\rd\tau\rd\tau'\lesssim R^{d-d/2q} \sum_{k,k'\in \Z_+}\min(2^{-k-k'},A)
\end{align*}
with $A=R^{-d+d/2q}(\log R)^{1/2}h^{d/2}$, where we recall that we are assuming that $R,h>2$. 
Since we may always assume that $R\gg 1$ and $h<R$ (otherwise there is no randomization), we have $A\ll 1$, and thus 
\begin{align*}
R^{d-d/2q} \sum_{k,k'\in \Z_+}\min(2^{-k-k'},A)\lesssim  (\log R)^{1/2}h^{d/2}(\log h+\log R)^2
\end{align*}
by Lemma \ref{lemma Geometric series estimate}.

\section{Knapp example}\label{Appendix B}
As mentioned in the introduction, we prove optimality (up to logarithms) of the key bounds of Lemmas \ref{lemma local extension bound} and \ref{lemma local resolvent bound}. In view of the foliation \eqref{foliation} it is sufficient to prove optimality of Lemma \ref{lemma local extension bound}. To this end, let $V$ be the indicator function of the tube
$$T_{R}=\{(x_1,x'):\,|x_1|<R,|x'|<R^{1/2}\},$$
normalized in $L^q$, i.e.\ $V=R^{-\frac{d+1}{2q}}\mathbf{1}_{T_{R}}$ (we will mollify this later). Here $R>1$ is a large parameter.
We consider the randomization $V_{\omega}$ (as in \eqref{randomization of deterministic V}) of this potential. We assume in the following that $\lambda=1$ in Lemma \ref{lemma local extension bound} and that $h$ is sufficiently small (to be fixed later). It is easy to see that we have 
\begin{align*}
\mathbf{E}\|\mathcal{E}^*V_{\omega}\mathcal{E}\|^2
&=\mathbf{E}\|\mathcal{E}^*\overline{V_{\omega}}\mathcal{E}\mathcal{E}^*V_{\omega}\mathcal{E}\|
= \mathbf{E}\sup_{\|f\|_{L^2(M)}=1}|\langle \mathcal{E}\mathcal{E}^*V_{\omega}\mathcal{E}f,V_{\omega}\mathcal{E}f\rangle|\\
&\geq \sup_{\|f\|_{L^2(M)}=1}|\mathbf{E}\langle \mathcal{E}\mathcal{E}^*V_{\omega}\mathcal{E}f,V_{\omega}\mathcal{E}f\rangle|\\
&\geq \sup_{\|f\|_{L^2(M)}=1}|\re\mathbf{E}\langle \mathcal{E}\mathcal{E}^*V_{\omega}\mathcal{E}f,V_{\omega}\mathcal{E}f\rangle|,
\end{align*}
where we recall that $M$ is the unit sphere in $\R^d$. In order to estimate the last expression from below,
we consider a Knapp example (see e.g.\ \cite[Example 1.8]{MR3971577})
\begin{align*}
\hat{f}_{R}(\xi):=R^{\frac{d-1}{4}}\eta(R\xi_1,R^{1/2}\xi'),
\end{align*}
where $\xi=(\xi_1,\xi')\in \R\times\R^{d-1}$ and $\eta\in C_0^{\infty}(B(0,2))$ is a nonnegative bump function equal to $1$ on $B(0,1)$. The normalization is chosen such that (up to an $R$-independent constant) $\|f\|_{L^2(M)}=1$. Assuming, as we may, that $\mathbf{E}\,\omega_i\omega_j=\delta_{ij}$, we have
\begin{align*}
\mathbf{E}\langle \mathcal{E}\mathcal{E}^*V_{\omega}\mathcal{E}f,V_{\omega}\mathcal{E}f\rangle=\sum_{j\in h\Z^d}\int_{\R^d\times \R^d}(\mathcal{E}\mathcal{E}^*)(x-y)\overline{V_{j}(y)(\mathcal{E}f)(y)}V_{j}(x)(\mathcal{E}f)(x)\rd y\rd x,
\end{align*}
where we wrote $V_j=V_{\omega}\mathbf{1}_{Q_j}$, $Q_j=j+hQ$.
Since $\mathcal{E}\mathcal{E}^*$ is proportional to convolution with $(\rd\sigma)^{\vee}$ and the latter oscillates on the unit scale, there are positive constants $r,c$ such that $\re (\rd\sigma)^{\vee}\geq c$ on $[0,r]$ (this follows from standard stationary phase asymptotics). Assume now that $2h<r$. Then, using the above Knapp example $\hat{f}_{R}$ as a test function and changing variables $u=x-y$, we obtain
\begin{align*}
\mathbf{E}\|\mathcal{E}^*V_{\omega}\mathcal{E}\|^2\gtrsim \re\sum_{j\in h\Z^d}\int_{\R^d\times \R^d} \overline{F_j(x-u)}F_j(x)\rd u\rd x,\quad (F_j=V_j\mathcal{E}f_R)
\end{align*}
up to an error involving the imaginary part $\overline{F_j(x-u)}F_j(x)$ (which is small as we will see).
At this point we consider a smooth (at the scale of $T_R$) version of the potential; this does not affect the previous arguments. What we gain by this is that now $\|\nabla F_j\|_{\infty}=\mathcal{O}(R^{-1/2})\|F_j\|_{\infty}$, whence, by Taylor expansion, 
\begin{align*}
\sum_{j\in h\Z^d}\int_{\R^d\times \R^d}\overline{F_j(x-u)}F_j(x)
=(2h)^{d}(1-\mathcal{O}(R^{-1/2}))\sum_{j\in h\Z^d}\int_{\R^d}|F_j(x)|^2\rd x.
\end{align*}
Computing the integral, this shows that
\begin{align*}
\mathbf{E}\|\mathcal{E}^*V_{\omega}\mathcal{E}\|^2\gtrsim R^{1-\frac{d+1}{q}}\|V\|_{q},
\end{align*}
which implies that $q\leq d+1$ is necessary for Lemma \ref{lemma local extension bound} to hold (since $R$ is arbitrarily large).

\bibliographystyle{abbrv}

\begin{thebibliography}{10}

\bibitem{bogli2021counterexample}
S.~Bögli and J.-C. Cuenin.
\newblock Counterexample to the {L}aptev--{S}afronov conjecture, 2021.

\bibitem{MR1877824}
J.~Bourgain.
\newblock On random {S}chr\"{o}dinger operators on {$\Bbb Z^2$}.
\newblock {\em Discrete Contin. Dyn. Syst.}, 8(1):1--15, 2002.

\bibitem{MR2083389}
J.~Bourgain.
\newblock Random lattice {S}chr\"{o}dinger operators with decaying potential:
  some higher dimensional phenomena.
\newblock In {\em Geometric aspects of functional analysis}, volume 1807 of
  {\em Lecture Notes in Math.}, pages 70--98. Springer, Berlin, 2003.

\bibitem{MR3608659}
J.-C. Cuenin.
\newblock Eigenvalue bounds for {D}irac and fractional {S}chr\"odinger
  operators with complex potentials.
\newblock {\em J. Funct. Anal.}, 272(7):2987--3018, 2017.

\bibitem{MR4241156}
J.-C. Cuenin and K.~Merz.
\newblock Weak coupling limit for {S}chr\"{o}dinger-type operators with
  degenerate kinetic energy for a large class of potentials.
\newblock {\em Lett. Math. Phys.}, 111(2):Paper No. 46, 2021.

\bibitem{MR3971577}
C.~Demeter.
\newblock {\em Fourier restriction, decoupling, and applications}, volume 184
  of {\em Cambridge Studies in Advanced Mathematics}.
\newblock Cambridge University Press, Cambridge, 2020.

\bibitem{MR2820160}
R.~L. Frank.
\newblock Eigenvalue bounds for {S}chr\"odinger operators with complex
  potentials.
\newblock {\em Bull. Lond. Math. Soc.}, 43(4):745--750, 2011.

\bibitem{MR3717979}
R.~L. Frank.
\newblock Eigenvalue bounds for {S}chr\"odinger operators with complex
  potentials. {III}.
\newblock {\em Trans. Amer. Math. Soc.}, 370(1):219--240, 2018.

\bibitem{MR2260376}
R.~L. Frank, A.~Laptev, E.~H. Lieb, and R.~Seiringer.
\newblock Lieb-{T}hirring inequalities for {S}chr\"odinger operators with
  complex-valued potentials.
\newblock {\em Lett. Math. Phys.}, 77(3):309--316, 2006.

\bibitem{MR3713021}
R.~L. Frank and B.~Simon.
\newblock Eigenvalue bounds for {S}chr\"odinger operators with complex
  potentials. {II}.
\newblock {\em J. Spectr. Theory}, 7(3):633--658, 2017.

\bibitem{MR4150258}
C.~Guillarmou, A.~Hassell, and K.~Krupchyk.
\newblock Eigenvalue bounds for non-self-adjoint {S}chr\"{o}dinger operators
  with nontrapping metrics.
\newblock {\em Anal. PDE}, 13(6):1633--1670, 2020.

\bibitem{MR1065993}
L.~H\"ormander.
\newblock {\em The analysis of linear partial differential operators. {I}},
  volume 256 of {\em Grundlehren der Mathematischen Wissenschaften [Fundamental
  Principles of Mathematical Sciences]}.
\newblock Springer-Verlag, Berlin, second edition, 1990.
\newblock Distribution theory and Fourier analysis.

\bibitem{MR2219246}
A.~D. Ionescu and W.~Schlag.
\newblock Agmon-{K}ato-{K}uroda theorems for a large class of perturbations.
\newblock {\em Duke Math. J.}, 131(3):397--440, 2006.

\bibitem{MR2540070}
A.~Laptev and O.~Safronov.
\newblock Eigenvalue estimates for {S}chr\"odinger operators with complex
  potentials.
\newblock {\em Comm. Math. Phys.}, 292(1):29--54, 2009.

\bibitem{MR1102015}
M.~Ledoux and M.~Talagrand.
\newblock {\em Probability in {B}anach spaces}, volume~23 of {\em Ergebnisse
  der Mathematik und ihrer Grenzgebiete (3) [Results in Mathematics and Related
  Areas (3)]}.
\newblock Springer-Verlag, Berlin, 1991.
\newblock Isoperimetry and processes.

\bibitem{MR3865141}
Y.~Lee and I.~Seo.
\newblock A note on eigenvalue bounds for {S}chr\"{o}dinger operators.
\newblock {\em J. Math. Anal. Appl.}, 470(1):340--347, 2019.

\bibitem{MR845980}
A.~Pajor and N.~Tomczak-Jaegermann.
\newblock Subspaces of small codimension of finite-dimensional {B}anach spaces.
\newblock {\em Proc. Amer. Math. Soc.}, 97(4):637--642, 1986.

\bibitem{MR1941087}
I.~Rodnianski and W.~Schlag.
\newblock Classical and quantum scattering for a class of long range random
  potentials.
\newblock {\em Int. Math. Res. Not.}, (5):243--300, 2003.

\bibitem{Ruiz2002}
A.~Ruiz.
\newblock Harmonic analysis and inverse problems.
\newblock {\em Lecture notes}, 2002.
\newblock
  https://www.uam.es/gruposinv/inversos/publicaciones/Inverseproblems.pdf.

\bibitem{Ruiz_lecturenotes}
A.~Ruiz.
\newblock Harmonic analysis and inverse problems.
\newblock 2014.

\bibitem{Safronov2021}
O.~Safronov.
\newblock Eigenvalue bounds for {S}chrödinger operators with random complex
  potentials, 2021.

\bibitem{MR1979773}
W.~Schlag, C.~Shubin, and T.~Wolff.
\newblock Frequency concentration and location lengths for the {A}nderson model
  at small disorders.
\newblock volume~88, pages 173--220. 2002.
\newblock Dedicated to the memory of Tom Wolff.

\bibitem{MR1232192}
E.~M. Stein.
\newblock {\em Harmonic analysis: real-variable methods, orthogonality, and
  oscillatory integrals}, volume~43 of {\em Princeton Mathematical Series}.
\newblock Princeton University Press, Princeton, NJ, 1993.
\newblock With the assistance of Timothy S. Murphy, Monographs in Harmonic
  Analysis, III.

\bibitem{TaoNotes1247A}
T.~Tao.
\newblock Lecture notes 1 for 247a.

\bibitem{MR1666558}
T.~Tao.
\newblock The {B}ochner-{R}iesz conjecture implies the restriction conjecture.
\newblock {\em Duke Math. J.}, 96(2):363--375, 1999.

\bibitem{MR3837109}
R.~Vershynin.
\newblock {\em High-dimensional probability}, volume~47 of {\em Cambridge
  Series in Statistical and Probabilistic Mathematics}.
\newblock Cambridge University Press, Cambridge, 2018.
\newblock An introduction with applications in data science, With a foreword by
  Sara van de Geer.

\end{thebibliography}

\end{document}